\theoremstyle{plain}
\newtheorem{theorem}{Theorem}[section]
\newtheorem*{theorem*}{Theorem}
\newtheorem{lemma}[theorem]{Lemma}
\newtheorem{proposition}[theorem]{Proposition}
\newtheorem{definition}[theorem]{Definition}
\theoremstyle{remark}
\newtheorem{remark}[theorem]{Remark}
\numberwithin{equation}{section}
\newcommand\quant{\advance\quantno by1
                      \ifnum\quantno=1\qquad\else\quad\fi\forall }
\newcommand\itemno[1]{(\romannumeral #1)}
\renewcommand\Re{\operatorname{\mathrm{Re}}}
\renewcommand\Im{\operatorname{\mathrm{Im}}}
\newcommand\rest[1]{\kern-.1em
          \lower.5ex\hbox{$\scriptstyle #1$}\kern.05em}
\renewcommand\mod[1]{\left\vert{#1}\right\vert}
\newcommand\bigmod[1]{\bigl\vert{#1}\bigr|}
\newcommand\Bigmod[1]{\Bigl\vert{#1}\Bigr|}
\newcommand\norm[2]{{\Vert{#1}\Vert_{#2}}}
\newcommand\bignorm[2]{\left.{\bigl\Vert{#1}\bigr\Vert_{#2}}\right.}
\newcommand\Bignorm[2]{\left.{\Bigl\Vert{#1}\Bigr\Vert_{#2}}\right.}
\newcommand\opnorm[2]{|\!|\!| {#1} |\!|\!|_{#2}}
\newcommand\prodo[2]{\left\langle#1,#2\right\rangle}
\newcommand\smallfrac[2]{\mbox{\small$\displaystyle\frac{#1}{#2}$}}
\newcommand\wrt{\,\text{\rm d}}
\newcommand\bB{\mathbf{B}}
\newcommand\bE{\mathbf{E}}
\newcommand\bP{\mathbf{P}}
 \newcommand\bs{\mathbf{s}}
\newcommand\bV{\mathbf{V}}
\newcommand\bW{\mathbf{W}}
\newcommand\BC{\mathbb{C}}
\newcommand\BN{\mathbb{N}}
\newcommand\BR{\mathbb{R}}
  \newcommand\fra{\mathfrak{a}}
\newcommand\cB{\mathcal{B}}
 \newcommand\frg{\mathfrak{g}}
\newcommand\cH{\mathcal{H}} \newcommand\fH{\mathfrak{H}}
  \newcommand\frk{\mathfrak{k}}
\newcommand\cL{\mathcal{L}}  
\newcommand\cM{\mathcal{M}}  
\newcommand\cN{\mathcal{N}}  \newcommand\frn{\mathfrak{n}}
  \newcommand\frp{\mathfrak{p}}
\newcommand\Rplus{{\mathbb R}^+}
\newcommand\al{\alpha}
\newcommand\be{\beta}
\newcommand\ga{\gamma} \newcommand\Ga{\Gamma}
\newcommand\de{\delta}
 \newcommand\vep{\varepsilon}
\newcommand\la{\lambda} \newcommand\La{\Lambda}
\newcommand\om{\omega} \newcommand\Om{\Omega} 
\newcommand\si{\sigma}
\newcommand\vp{\varphi}
\newcommand\OV{\overline}
\newcommand\funnyk{k\hbox to 0pt{\hss\phantom{g}}}
\newcommand\lu[1]{L^1(#1)}
\newcommand\laq[1]{L^q(#1)}
\newcommand\ld[1]{L^2(#1)}
\newcommand\ly[1]{L^\infty(#1)}
\newcommand\lorentz[3]{L^{#1,#2}(#3)}
\newcommand\bc{\mathbf{c}}
\newcommand\vbc{\rlap{\v}{{\textbf{c}}}}
\newcommand\wt{\widetilde}
\newcommand\whH{\widehat{\phantom{G}}\hbox to 0pt{\hss $H$}}
\newcommand\emspace{\hbox to 6pt{\hss}}
\newcommand\rmi{\hbox{\rm (i)}}
\newcommand\rmii{\hbox{\rm (ii)}}
\newcommand\rmiv{\hbox{\rm (iv)}}
\newcommand\rmvii{\hbox{\rm (vii)}}
\newcommand\rmix{\hbox{\rm (ix)}}
\newcommand\One{{\mathbf{1}}}
\newcommand\e{\mathrm{e}}
\newcommand\sft[1]{\wt{#1}}
\newcommand\planc{\mod{\mathbf{c}(\la)}^{-2} \wrt \la}
\newcommand\planl{\wrt \mu(\la)}
\newcommand\muD{\mu^s}
\newcommand\KX{K \backslash X}
\newcommand\KGK{{K \backslash G/K}}
\newcommand\alza[1]{\kern+.13em 
          \raise.35ex\hbox{$\scriptstyle #1$}}
\newcommand\alzaprima[1]{\kern-.25em
          \raise.2ex\hbox{$\scriptstyle #1$}}
\newcommand\GOp[1]{{\phantom{,}^{\alzaprima{G}}\cB^{\alza#1}(X)}}
\newcommand\Cone[1]{{\mathbf{\Gamma}_{#1}}}
\newcommand\Ball[1]{{\mathbf{B}_{#1}}}
\newcommand\NBall[1]{{\mathbf{b}_{#1}}}
\newcommand\Parabolic{{\mathbf{p}}}
\newcommand\Strip{{\mathbf{s}}}
\newcommand\OTW{\OV T_{\mathbf{W}}}
\newcommand\TW{T_{\mathbf{W}}}
\newcommand\TWpiu{T_{\mathbf{W}^+}}
\newcommand\OTWpiu{\OV{T}_{\mathbf{W}^+}}
\newcommand\TWpiut{T_{\mathbf{W}^t}}
\newcommand\TB{T_{\mathbf{B}}}
\newcommand\TE{T_{\mathbf{E}}}
\newcommand\TEpiu{T_{\mathbf{E}^+}}
\newcommand\TBpiu{T_{\mathbf{B}^+}}
\newcommand\TBpiut{T_{\mathbf{B}^t}}
\newcommand\lae{\lambda_{\eta}}
\newcommand\Cosh{\mathrm{Cosh}}
\newcommand\kBdue{k_{B_2}}
\newcommand\kBdueO{k_{B_2}^{(0)}}
\newcommand\kBdueI{k_{B_2}^{(1)}}
\newcommand\kMdue{k_{M_2(\cL)}}
\newcommand\kMdueO{k_{M_2(\cL)}^{(0)}}
\newcommand\kMdueI{k_{M_2(\cL)}^{(1)}}
\newcommand\astarc{\mathfrak{a}_{\mathbb{C}}^*}
\newcommand\aFb{\mathfrak{a}_F}
\newcommand\aFa{\mathfrak{a}^F}
\newcommand\astFa{(\mathfrak{a}^*)^F}
\newcommand\astFb{(\mathfrak{a}^*)_F}
\newcommand\nFb{\mathfrak{n}_F}
\newcommand\nFa{\mathfrak{n}^F}
\newcommand\rFb{\rho_F}
\newcommand\rFa{\rho^F}
\newcommand\QFb{\La_F}
\newcommand\QFa{\La^F}
\newcommand\HFb{H_F}
\newcommand\HFa{H^F}
\newcommand\lFb{\ell_F}
\newcommand\lFa{\ell^F}
\newcommand\Sp{\Sigma^+}
\newcommand\Spp{\Sigma_0^{+}}
\newcommand\Sppp{\Sigma_s}
\newcommand\SpFb{(\Sigma_F)^+}
\newcommand\SppFb{(\Sigma_F)_0^{+}}
\newcommand\bcFb{\mathbf{c}_F}
\newcommand\bcFa{\mathbf{c}^F}
\newcommand\height[1]{|#1|}
\newcommand\omFa{\omega^{F}}
\newcommand\omFaast{\omega^{*F}}
\newcommand\omFbast{\omega_F^*}
\newcommand\WFb{W_F}
\newcommand\phiFa[2]{\vp^F_{#1,#2}}
\newcommand\MFb{M_F}
\newcommand\MFa{M^F}
\newcommand\laFb{\lambda_F}
\newcommand\laFa{\lambda^F}
\newcommand\KFb{K_F}
\newcommand\PFb{P_F}
\newcommand\NFa{N^F}
\newcommand\pareteF[3]{\mathfrak{w}(#1;#2,#3)}
\newcommand\vbcFa{  \rlap{\v}{{\textbf{c}}}^F  }
\newcommand\Es[1]{\mathbf{E}_{#1}  }
\newcommand\Esi{\mathbf{E}_{\si}  }
\newcommand\EsFb[1]{(\mathbf{E}_F)_{#1}  }
\newcommand\Sspace[1]{Y(#1)}
\newcommand\Ss[2]{S_{\bE_{\sigma},{#2}}^{#1}}
\begin{document}

\title[Multipliers on symmetric spaces]
{Weak type estimates 
for spherical multipliers \\
on noncompact symmetric spaces }

\subjclass[2000]{}

\keywords{Spherical multipliers, symmetric spaces, imaginary powers,
weak type $1$ estimates, functions of the Laplace--Beltrami operator.}

\thanks{Work partially supported by the
Italian Progetto cofinanziato ``Analisi Armonica'' 2006--2008.}

\author[S. Meda and M. Vallarino]
{Stefano Meda and Maria Vallarino}

\address{Stefano Meda:
Dipartimento di Matematica e Applicazioni
\\ Universit\`a di Milano-Bicocca\\
via R.~Cozzi 53\\ 20125 Milano\\ Italy -- stefano.meda@unimib.it}
\address{Maria Vallarino:
Laboratoire de Math\'ema\-ti\-ques et Applications, Physique Math\'ema\-ti\-ques
d'Orl\'eans\\
 Universit\'e d'Orl\'eans, UFR Sciences\\
B\^atiment de Math\'ematique-Route de Char\-tres\\
B.P. 6759\\ 45067 Orl\'eans cedex 2\\ France --  maria.vallarino@unimib.it}
 
\begin{abstract}
In this paper we prove sharp weak type $1$ estimates
for spherical Fourier multipliers on symmetric spaces of the
noncompact type.
This complements earlier results of J.-Ph.~Anker and
A.D.~Ionescu.
\end{abstract}

\maketitle

\setcounter{section}{-1}
\section{Introduction} \label{s: Introduction}

The purpose of this paper is to give sharp weak type $1$ estimates
for a comparatively wide class of spherical Fourier multiplier operators
on Riemannian symmetric spaces of the noncompact type
that include the imaginary powers of the Laplace--Beltrami operator
$\cL$ and the resolvent operator $\cL^{-1}$.
Our result complements earlier results of J.-Ph.~Anker \cite{A1,A2} and
A.D.~Ionescu~\cite{I2,I3}, and may be thought of as an analogue
on noncompact symmetric spaces of
the classical Mihlin--H\"ormander multiplier theorem \cite{Ho}.

Suppose that $G$ is a noncompact semisimple Lie group with finite centre.
Denote by $K$ a maximal compact subgroup of $G$, and by $X$
the symmetric space of the noncompact type $G/K$.
We denote by $n$ and $\ell$ the dimension and the rank of $X$ respectively.
Denote by $\theta$ a Cartan involution
of the Lie algebra $\frg$ of $G$,
and write $\frg = \frk \oplus \frp$ for the corresponding
Cartan decomposition.  Let $\fra$ be a maximal
abelian subspace of $\frp$, and denote by $\fra^*$ its dual space,
and by $\astarc$ the complexification of $\fra^*$.
Denote by $\Sigma$ the set of (restricted) roots
of $(\frg,\fra)$; a choice for the set of positive roots
is written $\Sigma^+$, and $\fra^+$ denotes the corresponding
Weyl chamber.
The vector $\rho$ denotes
$(1/2)\sum_{\al \in \Sigma^+} m_{\al} \,\al$,
where $m_{\al}$ is the multiplicity of $\al$.
We denote by $\Sppp$ the set of simple
roots in $\Sigma^+$, and by $\Spp$ the set of indivisible
positive roots.  
Denote by $W$ the Weyl group of $(G,K)$, and by $\bW$ the interior of the
convex hull of the points $\{w\cdot \rho: w \in W\}$.
Clearly $\bW$ is an open convex polyhedron in $\fra^*$.
Recall that the Killing form $B(\cdot\kern1.5pt,\cdot)$ is a nondegenerate
bilinear form on~$\frg$ that is positive definite when
restricted to $\fra$.  This induces an inner product on $\fra^*$
and we denote by $\mod{\cdot}$ the associated norm.
Sometimes we shall use co-ordinates on~$\fra^*$.  When we do,
we always refer to the co-ordinates associated to the orthonormal basis
$\vep_1,\ldots,\vep_{\ell-1}, \rho/\mod{\rho}$, where 
$\vep_1,\ldots,\vep_{\ell-1}$ is any orthonormal basis of $\rho^\perp$.
In particular, for each multiindex $I= (i_1,\ldots,i_\ell)$, 
we denote by $D^I$
the partial derivative $\partial^{\mod{I}}/\partial_1^{i_1} 
\cdots \partial_\ell^{i_\ell}$ with respect to these
co-ordinates.

It is well known that $(G,K)$ is a Gelfand pair, i.e. the
convolution algebra $\lu{\KGK}$ of all $K$--bi-invariant
functions in $\lu{G}$ is commutative. The spectrum
of $\lu{\KGK}$ is the closure $\OTW$ in $\astarc$ 
of the tube $\TW = \fra^*+ i\bW$.
Denote by $\wt f$ the Gelfand transform
(also referred to as the spherical Fourier transform, or the
Harish-Chandra transform in~this~setting)
of the function $f$ in $\lu{\KGK}$.
It is known that $\wt f$ is a bounded continuous function on $\OTW$,
holomorphic in $\TW$,
and invariant under the Weyl group $W$.
The Gelfand transform
extends to $K$--bi-invariant
tempered distributions on $G$ (see, for instance, \cite[Ch.~6.1]{GV}).

For each $q$ in $[1,\infty)$,
denote by $\GOp{q}$ the Banach algebra of all $G$ invariant
bounded linear operators on $\laq{X}$, endowed with the
operator norm.
It is well known that $B$ is in~$\GOp{2}$ if and only
if there exists a $K$--bi-invariant tempered distribution $k_B$
on $G$ such that $\wt k_B$ is a bounded Weyl invariant function
on $\fra^*$ and
$$
Bf = f*k_B
\quant f \in \ld{X}
$$
(see \cite[Prop.~1.7.1 and Ch.~6.1]{GV} for details).
We call $k_B$ the \emph{kernel} of $B$.
We denote its spherical Fourier transform~$\wt k_B$
by $m_B$ and call it the \emph{spherical multiplier} associated to $B$.
As a consequence of a well known result of J.L.~Clerc and E.M.~Stein \cite{CS},
if $B$ is in~$\GOp{q}$ for all $q$ in $(1,\infty)$, then $m_B$
is a Weyl invariant holomorphic function in $\TW$,
bounded on closed substubes thereof.

For the rest of the Introduction we assume that $B$ is in
$\GOp{2}$ and that $m_B$ extends to a Weyl invariant holomorphic 
function in $\TW$, bounded on closed subtubes~thereof.
In this paper we consider the problem of finding conditions on $m_B$ such
that~$B$ extends to an operator of weak type $1$.

This problem has been considered by various authors.
Anker \cite{A1}, following up earlier results of M.~Taylor \cite{T}
and J.~Cheeger, M.~Gromov and Taylor \cite{CGT}
for manifolds with bounded geometry,
proved that if $m_B$ 
satisfies pseudodifferential estimates of the form
\begin{equation} \label{f: pseudodiff estimates}
\mod{D^{I} m_B(\zeta)}
\leq C\, \bigl(1+\mod{\zeta}\bigr)^{-\mod{I}}
\quant \zeta \in \TW
\end{equation}
for every multiindex $I$ such that $\mod{I} \leq [\![n/2]\!]+1$
($[\![\cdot]\!]$ denotes the integer part function),
then the operator $B$ is 
of weak type $1$.
This extends previous results concerning special classes of symmetric spaces
\cite{CS,ST,AL}.

Anker's result was complemented by
A.~Carbonaro, G.~Mauceri and Meda \cite{CMM},
who showed that if $m_B$ satisfies (\ref{f: pseudodiff estimates}), then
$B$ is bounded from the Hardy space $H^1(X)$ to $\lu{X}$
and from $\ly{X}$ to the space $BMO(X)$ of functions of bounded
mean oscillation on $X$ (see \cite{CMM} for the definition
of these spaces). The space $BMO(X)$ had already been
defined in the rank one case in \cite{I1}, where an interesting application
to oscillatory multipliers is given.

These results are somewhat of ``local'' nature in the following
sense. If $m_B$ satisfies (\ref{f: pseudodiff estimates}), 
then the convolution kernel $k_B$
may be written as the sum of
a local part $k_B^{0}$, which has compact support near the origin
and satisfies standard Calder\'on--Zygmund type estimates,
and a part at infinity $k_B^\infty$, which is in $\lu{X}$
(see the proof of the main result in \cite{A1}).
Clearly, the convolution operator $f \mapsto f*k_B^\infty$ is bounded on
$\lu{X}$, hence of weak type~$1$.
Furthermore, a standard procedure reduces the problem
of proving weak type $1$ estimates for the convolution
operator $f \mapsto f* k_B^0$ to a similar problem where
$f$ is an $\lu{X}$ function supported near the origin.
Since $k_B^0$ satisfies a H\"ormander type integral condition,
the weak type $1$ estimate for $f \mapsto f* k_B^0$ follows
from the general theory of singular integrals
on spaces of homogeneous type 
in the sense of Coifman and Weiss \cite{CW,St1}.

In view of this remark
it is natural to consider the problem of finding fairly
general conditions on $m_B$ that are
strong enough to guarantee that $B$ extend to an
operator of weak type $1$ and nevertheless do not imply that
$k_B$ be integrable at infinity.

A result in this direction that improves the
aforementioned result of Anker may be obtained by
routine adaptation of methods of Ionescu~\cite{I2, I3}
and of J.-O.~Str\"omberg \cite{Str}.
Define the function $d: \OTW \to [0,\infty)$ by 
\begin{equation} \label{f: def d}
d(\xi+i \eta)
= \bigl[\mod{\xi}^2+ \mathrm{dist}(\eta, \bW^c)^2\bigr]^{1/2}
\quant \xi \in \fra^* \quant \eta \in \OV\bW.
\end{equation}
Suppose that $m_B$ 
satisfies H\"ormander--Mihlin type conditions of the form
\begin{equation} \label{f: Horm type estimates p}
\mod{D^{I} m_B(\zeta)}
\leq C\, d(\zeta)^{-\mod{I}}
\quant \zeta \in \TW
\end{equation}
for every multiindex $I$ such that $\mod{I} \leq N$, where $N$
is a sufficiently large integer.
Then the operator $B$ is 
of weak type $1$.
A careful analysis shows that the kernel $k_B$
may indeed be nonintegrable at infinity. 
See Section~\ref{s: Weak type} for the precise
statement of a sharper form of this result,
where we allow the multiplier $m_B$ itself to be
unbounded on~$\TW$.

Though interesting, this result is not completely
satisfactory, because in the higher rank case it 
does not apply to certain natural operators like
the purely imaginary powers of the Laplace--Beltrami
operator $\cL$ on $X$ (see Remark~\ref{remark: imaginary}
for details).  
Furthermore, observe that if $B$ is in $\GOp{2}$ and of weak type $1$,
then $m_B$ need not be bounded on~$\TW$.
For instance, for each complex number $\al$ such that
$0\leq \Re \al \leq 2$, 
the operator $\cL^{-\al/2}$, spectrally defined, 
is of weak type $1$ \cite{A2,AJ}, and
$$
m_{\cL^{-\al/2}} (\zeta)
= Q(\zeta)^{-\al/2}
\quant \zeta \in \TW
$$
is unbounded near the vertices of $0+i\bW$, in particular near $i\rho$.
Here $Q$ denotes the Gelfand transform of $\cL$ (see 
(\ref{f: autov Laplaciano}) 
and (\ref{f: def Q}) below).
Note that the weak type $1$ estimate for $\cL^{-\al/2}$ is derived
in \cite{A2,AJ} from sharp estimates for the heat kernel. 
It is unlikely that a similar strategy applies
to more general~multipliers. 

We aim at proving a multiplier result which applies to 
$m_{\cL^{-\al/2}}$ for all complex $\al$ with $0\leq \Re \al \leq 2$.
Given a multiindex $(I',i_\ell)$ in $\BN^\ell$,
where $I'$ is in $\BN^{\ell-1}$ and $i_\ell$ is in $\BN$,
denote by $\mod{I'}$ the length of $I'$.
For each $\kappa$ in $[0,\infty)$ consider the following
nonisotropic condition on the multiplier~$m_B$:
\begin{equation} \label{f: anisotropic MH condition}
\bigmod{D^{(I',i_\ell)} m_B(\zeta)}
\leq \frac{C}{\min\bigl(\mod{Q(\zeta)}^{\kappa+i_\ell+\mod{I'}/2},
\mod{Q(\zeta)}^{(i_\ell + \mod{I'})/2} \bigr)}
\quant \zeta \in \TWpiu,
\end{equation}
for all $(I',i_\ell)$ 
with $ \mod{I'}+i_\ell \leq [\![n/2]\!]+1$.
The set $\TWpiu$ is defined in Section~\ref{s: Notation}.
Our main result, Theorem~\ref{t: main result}, 
states that if $m_B$ satisfies (\ref{f: anisotropic
MH condition}) and either $\kappa$ is in $[0,1)$,
or $\kappa$ is $1$ and~$B$ is a spectral multiplier 
of $\cL$, then $B$ is of weak type $1$.
Theorem~\ref{t: main result}~\rmii\ is sharp and it is strong enough
to give the weak type $1$ boundedness of $\cL^{-\al/2}$
for all complex numbers $\al$ with $0\leq \Re \al \leq 2$.

We observe that in the higher rank case 
condition (\ref{f: anisotropic MH condition}) is new, even when 
$\kappa = 0$.  
It is straightforward to check that both $d(\zeta)$
and $\mod{Q(\zeta)}^{1/2}$ are equivalent to $\mod{\zeta}$ as $\zeta$
tends to infinity within the tube $\TW$.
Therefore both condition (\ref{f: Horm type estimates p}) 
and condition (\ref{f: anisotropic MH condition}) are equivalent
to condition (\ref{f: pseudodiff estimates}) at infinity.
Moreover, if $\ell =1$, then
$\mod{Q(\zeta)}$ and $\mod{\zeta -i\rho}$ are comparable
as~$\zeta$ tends to~$i\rho$, and condition (\ref{f: anisotropic
MH condition}) becomes
$$
\bigmod{m_B^{(j)}(\zeta)}
\leq \frac{C}{\min\bigl(\mod{Q(\zeta)}^{\kappa+j},
\mod{Q(\zeta)}^{j/2} \bigr)}
\quant \zeta \in \TWpiu.
$$
Hence conditions (\ref{f: anisotropic MH condition}) and
(\ref{f: Horm type estimates p}) are equivalent when $\ell=1$ and
$\kappa =0$.
We emphasise the fact that 
(\ref{f: anisotropic MH condition}) is not equivalent
to (\ref{f: Horm type estimates p}) when $\ell \geq 2$
and $\zeta$ tends to $i\rho$ within $\TW$.

Conditions analogous to (\ref{f: anisotropic MH condition}) 
but on tubes smaller than $\TW$
may be considered, and corresponding weak or strong type $p$
estimates for spherical multipliers may be proved.  To keep
the length of this paper reasonable we shall postpone the detailed
study of operators satisfying these conditions to a forthcoming paper. 

Our paper is organised as follows.
Section~\ref{s: Notation} contains some notation and terminology.
In Section~\ref{s: Weak type} we define
certain function spaces that appear in the
statement of our main result, and state Theorem~\ref{t: main result}.
Sections~\ref{s: The higher rank case I} and \ref{s: Kernel estimates} 
are quite technical.  
In Section~\ref{s: The higher rank case I} we adapt
methods of Str\"omberg \cite{Str} to prove weak type $1$
boundedness results for the convolution operators with kernels
which are relevant in the proof of Theorem~\ref{t: main result}
(see formula (\ref{f: estimatetau})). 
Section~\ref{s: Kernel estimates} is devoted to estimating
the kernel $k_B$ when $m_B$ satisfies (\ref{f: anisotropic
MH condition}).
The proof of Theorem~\ref{t: main result} hinges on
the results of Sections~\ref{s: The higher rank case I}
and \ref{s: Kernel estimates}, and is given in
Section~\ref{s: Proof}.

We will use the ``variable constant convention'', and denote by $C,$
possibly with sub- or superscripts, a constant that may vary from place to
place and may depend on any factor quantified (implicitly or explicitly)
before its occurrence, but not on factors quantified afterwards.

\section{Notation and background material} \label{s: Notation}

We use the standard notation of the theory of Lie groups and
symmetric spaces, as in the books of Helgason~\cite{H1,H2}. 
We shall also refer to the book \cite{GV} and to the
paper \cite{AJ}.

In addition to the notation above, denote by $\frn$ the subalgebra
$\sum_{\al \in \Sigma^+} \frg_{\al}$ of $\frg$.
By $N,$ $\OV N,$ $A,$ and $K$ we denote the subgroups of $G$ corresponding
to $\frn,$ $ \theta \frn,$
$\fra,$ and $\frk$ respectively, and write $G = KAN$ and $G = \OV NAK$
for the associated Iwasawa decompositions.
Given $\la$ in $\fra^*$, define $H_\la$
to be the unique element in $\fra$ such~that
$$
B(H_\la ,H)=\la (H) \quant H \in \fra,
$$
and then an inner product $\prodo{\cdot\kern1.5pt}{\cdot}$
on $\fra^*$ by the rule
$$
\prodo\la{\la'} = B(H_\la ,H_{\la'})
\quant \la ,\la' \in \fra^*.
$$
We abuse the notation, and denote by $\mod{\cdot}$
both the norms associated to the inner products $\prodo{\cdot}{\cdot}$
on $\fra^*$ and $B(\cdot,\cdot)$ on $\fra$.
The inner product $\prodo{\cdot}{\cdot}$ on $\fra^*$ extends to a bilinear
form, also denoted $\prodo{\cdot\kern1.5pt}{\cdot},$ on $\fra_{\BC}^*$.
For any $R$ in $\BR^+$ define
\begin{equation} \label{f: Ball}
\Ball{R} 
= \{\la \in \fra^* : \mod{\la} < R \}.
\end{equation}
The ball $\Ball{\mod{\rho}}$ will occur frequently in the
analysis of functions of the Laplace--Beltrami operator.
For notational convenience, we shall write
$\bB$ instead of $\Ball{\mod{\rho}}$.

If $H$ is in $\fra$, we write $(H_1,\ldots,H_\ell)$ for
the vector of its co-ordinates with respect to the dual basis
of the basis $\vep_1,\ldots,\vep_{\ell-1},\rho/\mod{\rho}$ 
of $\fra^*$ defined in the Introduction.  Observe that the last vector of
this dual basis is $H_\rho/\mod{H_\rho}$.  
Sometimes we shall write $H'$
instead of $(H_1,\ldots, H_{\ell-1})$.  
Define $\cN: \fra \to \BR$ by
\begin{equation} \label{f: anisotropic norm}
\cN(H',H_\ell)
= \bigl(\mod{H'}^4+H_\ell^2\bigr)^{1/4}.
\end{equation}
Note that~$\cN$ is homogeneous with respect to the dilations
$
(H',H_\ell) \mapsto (\vep \, H', \vep^2 \, H_\ell),
$
and that the homogeneous dimension of~$\fra$, endowed with the
(quasi) metric induced by $\cN$, is $\ell + 1$.
Suppose that $R$ is in $\BR^+$.  Define
\begin{equation}  \label{f: NBall}
\NBall{R} 
 = \{H \in \fra: \cN(H',H_\ell) < R \}.
\end{equation}
Define the parabolic region $\Parabolic$ in $\fra$ by 
\begin{equation} \label{f: Parabolic}
\Parabolic
= \{ H\in \fra : \mod{H'} < H_\ell^{1/2} \}.
\end{equation}
Define the functions $\om: \fra \to \BR$ and
$\om^*: \fra^*\to \BR$ by 
\begin{equation} \label{f: omega}
\om(H)
= \min_{\al\in\Sppp} \al(H)  \quant H \in \fra
\qquad\hbox{and}\qquad
\om^*(\la) 
= \min_{\al\in\Sppp} \prodo{\al}{\la} \quant \la\in\fra^*.
\end{equation}
Furthermore for each
$c$ in $\BR^+$, define the subset $\Strip_c$ of $\OV{\fra^+}$ by
\begin{equation} \label{f: Strip}
\Strip_c = \{ H \in \fra: 0\leq \om(H) \leq c \}.
\end{equation}
Denote by $\bigl(\fra^*\bigr)^+$ the interior of the
fundamental domain of the action of the Weyl group $W$
that contains $\rho$.
For any subset $\bE$ of $\fra^*$ denote by 
$T_{\bE}$ the tube over $\bE$, i.e., the set $\fra^*+i \bE$ 
in the complexified space 
$\fra_\BC^*$,  and by $\OV{T}_{\bE}$ its closure in $\fra_\BC^*$.
For each $t$ in $\BR$ we denote by $\bE^t$ the set 
\begin{equation} \label{f: ngbhd of E+}
\bE^t
= \{ \la \in \bE: \, \,  \om^*(\la) > t \}.
\end{equation}
Note that if $E$ is open, then $\bE^0$ is the interior
of $\OV{(\fra^*)^+}\cap \bE$.  For simplicity, we shall write
$\bE^+$ instead of $\OV{(\fra^*)^+}\cap \bE$.
Notice that $\bW^+$ is neither open nor closed in $\fra^*$, whereas
for each $t$ in $\BR^-$ the set $\bW^t$ is an 
open neighbourhood of $\bW^+$ that contains the origin. 
Thus, $\TWpiut$ is a neighbourhood
of $\TWpiu$ in $\fra_\BC^*$ that contains $\fra^*+i 0$.

We write $\wrt x$ for a Haar measure on $G,$ and let $\wrt k$ be the
Haar measure on $K$ of total mass one.
We identify functions on the symmetric space $X$ with
right--$K$--invariant functions on~$G$, in the usual way.
If $E(G)$ denotes a space of functions on $G$, we
define $E({\KX})$ and $E({X})$ to be the closed subspaces of
$E(G)$ of the $K$--bi-invariant and the right--$K$--invariant
functions respectively.
The Haar measure of $G$ induces a $G$--invariant measure
$\wrt \dot x$ on $X$ for~which
$$
\int_X f(\dot x) \, \wrt \dot x = \int_G f(x) \, \wrt x
\quant f \in C_c(X),
$$
where $\dot x = xK.$
We recall that
$$
\int_G f(x)\wrt x 
= \int_K\!\int_{\fra^+}\!\int_K 
f\bigl(k_1(\exp H) k_2\bigr)\, \de(H) \wrt k_1\wrt k_2 \wrt H,
$$
where $\wrt H$ denotes a suitable nonzero multiple of the Lebesgue
measure on $\fra$, and 
$$
\de(H)=
\prod_{\al\in\Sp}\sinh^{m_{\al}}\bigl(\al(H)\bigr)
\leq C\, \e^{2\rho(H)}
\quant H \in \fra^+.
$$
For any $a$ in $A$
we denote by $\log a$ the element $H$ in $\fra$ such that $\exp H=a$.
For any $x$ in $G$, we denote by $H(x)$ the unique element of $\fra$
such that $x$ is in $K \exp H(x) N$. Thus, $H(kan) = \log a$.
For any $\la$ in $\astarc$, 
the elementary spherical function $\vp_\la$ is defined by the rule
$$
\vp_\la (x) = \int_K \exp [(i\la - \rho) H(xk)] \wrt k
\qquad \forall x \in G.
$$
The spherical transform $\sft f$, also denoted by $\cH f$,
of an $\lu G$-function $f$ is
defined by 
$$
\sft f (\la) = \int_G f (x) \, \vp_{-\la} (x) \wrt x
\qquad \forall \la \in \fra^*.
$$
Harish-Chandra's inversion formula and Plancherel formula state
that
$$
f(x) = \int_{\fra^*} \sft f (\la) \, \vp_\la(x)\, \planl
\qquad \forall x \in G
$$
for ``nice'' $K$-bi-invariant functions $f$ on $G$, and
$$
\norm{f}2 = \left[\int_{\fra^*} \bigmod{\sft f(\la)}^2\, \planl \right]^{1/2}
\qquad\forall f\in \ld {K\backslash G/K},
$$
where $\planl = c_{{}_G} \planc $, and $\mathbf{c}$ denotes the
Harish-Chandra $\mathbf{c}$-function. For the details, see, for instance,
\cite[IV.7]{H1}.  
Sometimes we shall write $\cH^{-1}$ for the inverse
Fourier transform. 
The Harish-Chandra $\mathbf{c}$-function is given by
$$
\bc(\la) = \prod_{\al\in\Spp}  c_{\al}(\prodo{\al}{\la}),
$$
where each Plancherel factor 
$c_{\al}$ is given by an explicit formula involving several 
$\Ga$-functions \cite[Thm 6.14]{H1}. It is well kwnown that
\begin{equation}\label{f: estimatec}
\mod{\bc(\la)}^{-2} 
\leq C\, \bigl(1+\mod{\la}   \bigr)^{\sum_{\al\in\Spp}d_{\al}}
\leq C\, \bigl(1+\mod{\la}   \bigr)^{n-\ell},
\end{equation}
where $d_{\al}=\rm{dim} \mathfrak{g}_{\al}+ \rm{dim} \mathfrak{g}_{2\al}$. 
We denote by $\vbc$ the function $\vbc(\la)=\bc(-\la)$ 
which is holomorphic in $\TWpiut$ for 
some negative $t$ and satisfies the following estimate
$$
\mod{ (\vbc)^{-1}(\zeta) }\leq C\, \prod_{\al\in\Spp} 
\bigl(1+\mod{\zeta}   \bigr)^{\sum_{\al\in\Spp}d_{\al}/2}
\leq C\, \bigl(1+\mod{\zeta}   \bigr)^{(n-\ell)/2}
\quant\zeta\in \TWpiut.
$$
This, the analyticity of $(\vbc)^{-1}$ on $\TWpiut$, and Cauchy's
integral formula imply that  for every multiindex $I$  
\begin{equation}  \label{f: derivata c check}
\bigmod{D^I(\vbc)^{-1}(\zeta)}
\leq C\, \bigl(1+\mod{\zeta}   \bigr)^{(n-\ell)/2}
\quant \zeta\in \TWpiut.
\end{equation}

Now, we describe the various faces of $\OV{\fra^+}$ which are 
in one-to-one correspondence with the nontrivial
subsets $F$ of $\Sppp$. 
We denote by $\SpFb$ the positive root subsystem 
generated by $F$ and by $\SppFb$ the positive indivisible 
roots in $\SpFb$.  Then we may write
$$
\fra=\aFb\oplus\aFa,\qquad \fra^*=\aFb^*\oplus \astFa,
\qquad \frn=\nFb\oplus\nFa \qquad {\rm{and~}} \qquad N=N_FN^F,
$$
where $\aFb$ is the subspace generated by the vectors $\{H_\al:
\al \in F \}$, $\aFa$ denotes its orthogonal 
complement in $\fra$, $\aFb^*$ is the subspace of $\fra^*$ 
generated by $F$, $\astFa$ denotes its orthogonal complement
in $\fra^*$, $\nFb=\oplus_{\al\in\SpFb}\frg_{\al}$ and 
$\nFa=\oplus_{\al\in\Sp\setminus \SpFb}\frg_{\al}$. 
The face $(\aFa)^+$ of $\OV{\fra^+}$ attached to $F$~is
$$
(\aFa)^+ = 
\{ H\in\aFa:  \al(H)>0 \, \, \, \, \,  \forall\al\in\Sppp\setminus F  \}.
$$
We shall write $H=\HFb+\HFa$ and $\la=\laFb+\laFa$ 
according to the decompositions 
$\fra=\aFb\oplus\aFa$ and $\fra^*=\aFb^*\oplus\astFa$
respectively.  In particular, $\rho=\rFb+\rFa$. 
Observe that $\ell=\lFb+\lFa$, where $\lFb$ and $\lFa$ denote the 
dimensions of $\aFb$ and $\aFa$, respectively.

We denote by $\La$ the lattice $\sum_{\al\in\Sppp}\mathbb{N}\al$. 
Observe that $\La=\QFb+\QFa$, 
where $\QFb = \sum_{\al\in F}\BN\al$ and 
$\QFa = \sum_{\al\in\Sppp\setminus F}\BN\al$, and
$$
\bc
= \bcFb\,\, \bcFa,
$$
where 
$$
\begin{aligned}
\bcFb(\la)
&  =\prod_{\al\in\SppFb}
c_{\al}(\prodo{\al}{\la})
\qquad {\rm{and}}\qquad
\bcFa(\la)
=\prod_{\al\in\Spp\setminus \SppFb}
c_{\al}(\prodo{\al}{\la}).
\end{aligned}
$$
We shall often use the following estimates:
\begin{equation}\label{f: estimatecFbcFa}
\mod{\bcFb(\la)}^{-2} 
\leq C\, \bigl(1+\mod{\la}   \bigr)^{\sum_{\al\in\SppFb}d_{\al}}
\qquad 
\mod{D^I_{\la}(\vbcFa)^{-1}(\la)} 
\leq C\, \bigl(1+\mod{\la}   \bigr)^{\sum_{\al\in\Spp\setminus \SppFb}d_{\al}/2}
\end{equation}
and for every multiindex $I$
\begin{equation}\label{f: estimatecFbcFainsieme}
\mod{\bcFb(\la)}^{-1} \,\mod{D^I_{\la}(\vbcFa)^{-1}(\la)}
\leq C\, \bigl(1+\mod{\la}   \bigr)^{n-\ell}.
\end{equation}

We denote by $\PFb$ the normalizer of $\NFa$ in $G$; it has Langlands 
decomposition $\PFb=\MFb(\exp\aFa)\NFa$, where $\MFb$ and 
$\MFa=\MFb(\exp\aFa)$ are closed subgroups of $G$. 
We denote by $\omFaast$ and $\omFbast$ the functions defined by
$$
\omFaast(\la)
= \min_{\al\in\Sppp\setminus F}\prodo{\al}{\la}
\qquad\hbox{and}
\qquad
\omFbast(\la)
= \min_{\al\in F}\prodo{\al}{\la}
\quant \la\in\fra^*.
$$
The height of an element $q=\sum_{\al\in\Sppp}n_{\al}\al$ 
in $\La$ is defined by
$
\height{q}=\sum_{\al\in\Sppp}n_{\al}.
$
The asymptotic expansion of the spherical functions along the walls of the Weyl 
chamber is due to P.C.~Trombi and V.S.~Varadarajan \cite[Thm~2.11.2]{TV}
(see also \cite[Thm~5.9.4]{GV}).  For the reader's convenience
we state the following variant of \cite[Thm~2.11.2]{TV}, due to Anker and Ji
\cite[Theorem 2.2.8]{AJ}.

\begin{theorem}\label{t: asymptoticexpansion}
Suppose that $X$ is a symmetric space of the noncompact type. 
Suppose that $F$ is a nontrivial
subset of $\Sppp$, that $\la$ is regular and that $H$ is in $\OV{\fra^+}$ 
with $\omFa(H)>0$. We have an asymptotic expansion 
$$
\vp_{\la}(\exp H)\sim 
\e^{-\rFa(H)}\,\sum_{q\in \QFa}\,\sum_{w\in \WFb \backslash W}
\bcFa(w\cdot\la)\,\phiFa{w\cdot\la}{q}(\exp H),
$$
where
\begin{enumerate}
\item[\itemno1] $\phiFa{\la}{0}$ is the spherical function 
of index $\la$ on $\MFa=\MFb\,\exp \aFa$ and
$$
\phiFa{\la}{0}(x)=\vp^F_{\laFb}(y)\,\e^{i\laFa(H)}
\quant x=y\exp H\in \MFb\exp\aFa;
$$ 
\item[\itemno2] $\phiFa{\la}{q}$ are bi-$\KFb$-invariant $C^{\infty}$ 
functions in the variable $x\in\MFa$ and $\WFb$-invariant holomorphic 
functions in the variable $\la$ in the region
$$   
\{\la = \laFb + \laFa \in \astarc:
\mod{\Im\laFb}<c,\,\omFaast(\Im\laFa)>-c  \},
$$
for some small positive $c$; moreover,
$$
\phiFa{\la}{q}(x)
= \phiFa{\la}{q}(y)\,\e^{(i\la-q)(H)}
\quant x=y\exp H\in \MFb\exp\aFa;
$$ 
\item[\itemno3] for every $q$ in $\QFa$ there exists a constant $d\geq 0$ and 
for every positive $c$ there exists a constant $C\geq 0$ such that
$$
\bigmod{\phiFa{\la}{q}(\exp H)}\leq C\,\e^{c\,\mod{\HFb}}\,
(1+\mod{\la})^{d} \,
\e^{-[\Im(\la)+\rFb+q](H)}
\quant \la\in\fra^*+i\OV{(\astFa)^+}, \,H\in\OV{\fra^+};
$$
\item[\itemno4] for every positive integer $N$ there exists a constant $d\geq 0$ 
and for every positive $c$ there exists a constant $C\geq 0$ such that
$$
\begin{aligned}
&\Bigmod{\vp_{\la}(\exp H)-  \e^{-\rFa(H)}\,
\sum_{q\in \QFa,\,\height{q}<N }\,\sum_{w\in \WFb \backslash W}
\bcFa(w\cdot\la)\,\phiFa{w\cdot\la}{q}(\exp H)  }\\
\leq & \,C\, (1+\mod{\la})^d\,(1+\mod{H})^d\, \e^{-\rho(H)-N\omFa(H)}
\end{aligned}
$$
for $\omFa(H)>c$.
\end{enumerate}
\end{theorem}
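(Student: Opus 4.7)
The plan is to derive the expansion from Harish-Chandra's series representation of the spherical function on $\fra^+$. For regular $\la$, one has
$$\vp_\la(\exp H) = \sum_{w\in W} \bc(w\la)\,\Phi_{w\la}(\exp H), \qquad \Phi_\la(\exp H) = \e^{(i\la-\rho)(H)}\sum_{q\in\La}\Ga_q(\la)\,\e^{-q(H)},$$
where the Gangolli coefficients $\Ga_q(\la)$ are determined by a recursion whose denominators vanish only on the singular hyperplanes $2\prodo{q}{i\la}-\prodo{q}{q}=0$. First I would reorganise this series using the orthogonal splittings $\La=\QFb+\QFa$, $H=\HFb+\HFa$, $\la=\laFb+\laFa$, together with the factorisation $\bc=\bcFb\bcFa$. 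Grouping the $q$-sum by its $\QFa$-component $q^F$, and pooling the $W$-sum along cosets of $\WFb$, I arrive at an expression of the shape
$$\vp_\la(\exp H)=\e^{-\rFa(H)}\sum_{q^F\in\QFa}\sum_{w\in\WFb\backslash W}\bcFa(w\la)\,\Theta_{w\la,q^F}(\HFb,\HFa),$$
in which the inner factor $\Theta$ is $\WFb$-invariant and, by construction, depends on $\HFa$ only through $\e^{(iw\la-q^F)(\HFa)}$.

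The second step is to identify $\Theta_{\la,q^F}(\HFb,\HFa)$ with $\phiFa{\la}{q^F}(y\exp\HFa)$ for $y\in\MFb$, using the isomorphism of $\WFb$ with the Weyl group of the Levi pair $(\MFb,\KFb)$. For $q^F=0$ the surviving $q_F$-series together with the $\bcFb$-factors reassembles precisely into the Harish-Chandra series for the spherical function $\vp^F_{\laFb}$ on $\MFb$, which yields item (i); the equivariance in item (ii) follows from the $\aFa$-exponential already isolated. Holomorphy of $\phiFa{\la}{q}$ on the enlarged region $\{\mod{\Im\laFb}<c,\ \omFaast(\Im\laFa)>-c\}$ is inherited from the analyticity of $\Phi_\la$ off its singular set, once one checks that any poles of $\bcFa$ coming from roots in $F$ have been absorbed into the $\MFb$-piece.

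The third step is the uniform control needed for items (iii) and (iv). The polynomial bound on $\phiFa{\la}{q}$ follows from the classical polynomial-in-$\la$ estimates on $\Ga_q(\la)$ in tubes avoiding the singular set, combined with the equivariance relation to pass from $\MFb$ to $\MFa$. For (iv), truncating the $\QFa$-series at $\height{q^F}<N$ leaves a tail on which the hypothesis $\omFa(H)>c$ gives $\e^{-q^F(\HFa)}\leq\e^{-c\,\height{q^F}}$, and geometric summation in $\height{q^F}$ produces the claimed remainder factor $\e^{-\rho(H)-N\omFa(H)}$ after extracting the polynomial prefactors in $\la$ and $H$. The main obstacle will be this quantitative control of the Gangolli coefficients: their recursion introduces denominators that shrink near the singular hyperplanes, so proving that the extracted $\phiFa{\la}{q}$ remain holomorphic and polynomially bounded on the enlarged region requires a careful induction on $\height{q^F}$ combined with a Phragm\'en--Lindel\"of type argument across the walls. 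This is the technical heart of the Trombi--Varadarajan expansion \cite[Thm~2.11.2]{TV} and of its sharpening \cite[Thm~2.2.8]{AJ}, whose arguments can be imported essentially verbatim.
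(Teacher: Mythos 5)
Note first that the paper does not prove this statement: it is quoted verbatim as a known result of Trombi--Varadarajan \cite[Thm~2.11.2]{TV} in the sharpened form of Anker and Ji \cite[Theorem 2.2.8]{AJ}, so there is no internal argument to compare yours with, and a citation is all that is expected here.

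Taken as a proof, your sketch has a genuine gap at its first and central step. The theorem is asserted for $H\in\OV{\fra^+}$ with only $\omFa(H)>0$, i.e.\ the simple roots in $F$ may vanish, or be arbitrarily small, at $H$ --- this is precisely the regime the expansion is designed for. But the Harish-Chandra series you start from converges only for regular $H$, and for $q\in\QFb$ the factors $\e^{-q(H)}$ do not decay as $\al(H)\to 0$, $\al\in F$; the partial series over $\QFb$ (with Gangolli coefficients that grow in $\height{q}$) diverges on the $F$-walls and is not uniformly controlled near them. Hence the term-by-term regrouping does not by itself define $\phiFa{\la}{q}$ on all of $\MFa$, does not identify the $q^F=0$ block with $\vp^F_{\laFb}$ beyond the regular set, and cannot yield the bound \rmiii\ on the unbounded tube $\fra^*+i\OV{(\astFa)^+}$ or the remainder estimate \rmiv\ uniformly on $\{\omFa(H)>c\}$ with merely polynomial loss in $\mod{\la}$ and $\mod{H}$ --- these uniform estimates up to the walls are exactly the content of the theorem, not a routine consequence of Gangolli's bounds away from the singular hyperplanes. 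You acknowledge this by deferring ``the technical heart'' to \cite{TV} and \cite{AJ} ``essentially verbatim'', but that deferral is the entire proof; what precedes it is a formal motivation whose convergence and pole issues it does not resolve. Deferring to the references is perfectly acceptable (the paper does the same), but then the honest statement is that the result is being cited, not proved.
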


Denote by $\cL_0$ minus the Laplace--Beltrami operator on $X$
associated to the metric given by the Killing form on $\frg$:
$\cL_0$ is a symmetric operator on $C_c^\infty(X)$ (the space of
smooth complex-valued functions on $X$ with compact support).  Its closure
is a self adjoint operator on $\ld{X}$ that we denote by $\cL$.
It is known that 
the bottom of the $\ld{X}$ spectrum of $\cL$
is~$\prodo{\rho}{\rho}$.
Note that
\begin{equation} \label{f: autov Laplaciano}
\cL \vp_\la = Q(\la)\, \vp_\la
\quant\la\in \fra_{\BC}^*,
\end{equation}
where $Q$ is the quadratic function on $\astarc$
defined by
\begin{equation} \label{f: def Q}
Q(\zeta) = \prodo{\zeta}{\zeta} + \prodo{\rho}{\rho}
\quant \zeta \in \astarc.
\end{equation}
The operator $\cL$ generates a symmetric diffusion
semigroup $\{\cH^t\}_{t>0}$ on $X$. For $t$ in $\Rplus ,$
denote by $h_t$ the heat kernel at time $t$, i.e., 
\begin{equation} \label{f: heat kernel}
h_t(x)
= \int_{\fra^*} e^{-t\, Q(\la)} \, \vp_\la(x)\, \planl
\quant x \in G.
\end{equation}

\section{Statement of the main result}
\label{s: Weak type}

In this section we define some Banach spaces of holomorphic
functions that are relevant for our analysis of spherical
multipliers, and study their relationships.  Then we state
our main result.

The following definition is motivated by the main result in \cite{I2,I3}.

\begin{definition} \label{def: MHI conditions}
Suppose that $J$ is a nonnegative integer and that
$\kappa$ is in $[0,\infty)$. We denote by $\wt H(\TW;J,\kappa)$ 
the space of all holomorphic functions $m$ in $\TW$ such that
$\norm{m}{\wt H(\TW;J,\kappa)}< \infty$, where
$\norm{m}{\wt H(\TW;J,\kappa)}$ is the infimum of all constants $C$
such that 
\begin{equation}\label{MHconditions}
\mod{D^{I} m(\zeta)}
\leq \frac{C}{\min\bigl(d(\zeta)^{\kappa+\mod{I}},
d(\zeta)^{\mod{I}} \bigr)} 
\quant {\zeta \in \TW}\quant{I : \mod{I} \leq J}
\end{equation}
and $d$ is defined in (\ref{f: def d}).
\end{definition}

The following result complements the work of Ionescu \cite{I2,I3}.
Recall that $n$ and $\ell$ denote the dimension and the rank of $X$
respectively.

\begin{theorem} \label{t: weak1}
Assume that $\kappa$ is in $[0,1)$.
Suppose that $B$ is an operator in $\GOp{2}$ and that
$m_B$ is in $\wt H\bigl(\TW;[\![n/2]\!]+\ell/2+1,\kappa\bigr)$.
Then~$B$ extends to an operator of weak type $1$.
\end{theorem}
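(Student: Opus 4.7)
The plan is to follow the Ionescu--Strömberg approach alluded to in the introduction. Fix a smooth $K$-bi-invariant cutoff $\chi: G \to [0,1]$ that equals $1$ on a neighbourhood of $K$ and vanishes outside a slightly larger one, and decompose the convolution kernel as $k_B = \chi\, k_B + (1-\chi)\, k_B =: k_B^0 + k_B^\infty$. The pseudodifferential condition (\ref{f: pseudodiff estimates}) coincides with (\ref{MHconditions}) on the region where $d(\zeta) \geq 1$ (the minimum in (\ref{MHconditions}) is then realised on $d(\zeta)^{|I|}$), so the local multiplier $\sft{k_B^0}$, which differs from $m_B$ by a function with entire spherical transform, satisfies Anker's hypotheses \cite{A1}. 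Consequently $f \mapsto f \ast k_B^0$ is of weak type $1$ via the Coifman--Weiss theory of singular integrals on the space of homogeneous type $(G,\mathrm{d}x)$ \cite{CW}, since $k_B^0$ is compactly supported and inherits the standard size and regularity estimates of a Calderón--Zygmund kernel.

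To treat the part at infinity $k_B^\infty$, I would exploit the holomorphy of $m_B$ on $\TW$. For $x = k_1 \exp(H) k_2$ with $H \in \fra^+$ of large norm, the spherical inversion formula reads
\[
k_B^\infty(\exp H) = \int_{\fra^*} \sft{k_B^\infty}(\la)\, \vp_\la(\exp H)\, \planc.
\]
Partition the Weyl chamber into subregions indexed by subsets $F \subseteq \Sppp$ according to which of the quantities $\omFa(H)$ are large. On each subregion, deform the contour from $\fra^*$ to $\fra^* + i\, \rFa$; this is legitimate because $m_B$ is holomorphic on $\TW$ and the Plancherel density extends holomorphically with the polynomial bounds recorded in (\ref{f: derivata c check})--(\ref{f: estimatecFbcFainsieme}). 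The Trombi--Varadarajan asymptotic expansion (Theorem \ref{t: asymptoticexpansion}) then yields a pointwise estimate of the form
\[
\bigmod{k_B^\infty(\exp H)} \leq C\, \e^{-\rho(H)}\, \Phi_\kappa(H),
\]
where $\Phi_\kappa$ carries polynomial factors in $|H|$ together with a singular contribution coming from the $d(\zeta)^{-\kappa}$ blow-up of $m_B$ near the boundary of $\bW$. The hypothesis $\kappa < 1$ is precisely what renders this singular contribution integrable against the appropriate Plancherel-type measure, so that the annular $L^1$ masses of $k_B^\infty$ decay at a controlled rate.

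Finally, for the weak type $1$ estimate on $f \mapsto f \ast k_B^\infty$, I would invoke a Strömberg-type Calderón--Zygmund decomposition of $f$ at level $\alpha$, as in \cite{Str, I2, I3}. The good part is handled by the $L^2$-boundedness guaranteed by $B \in \GOp{2}$; the bad part is dominated by integrating the pointwise bound above against the atoms of the decomposition, using the annular $L^1$ control of $k_B^\infty$. The main technical obstacle is the contour-shift step in the higher rank case $\ell \geq 2$: when $H$ is close to some walls of the Weyl chamber and far from others, the contour cannot be shifted uniformly into $i\rho$ and must be deformed face by face, with the error terms in Theorem \ref{t: asymptoticexpansion} managed simultaneously with the degenerate multiplier bounds. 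The smoothness order $J = [\![n/2]\!] + \ell/2 + 1$ and the restriction $\kappa < 1$ combine exactly so that the competing polynomial and singular factors balance, yielding summable annular contributions and thereby the desired weak type $1$ bound.
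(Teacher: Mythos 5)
The paper itself does not prove Theorem~\ref{t: weak1} (it refers to Ionescu's argument in [I3]), so your sketch must be measured against that intended argument and against the analogous machinery the paper builds for Theorem~\ref{t: main result} in Sections~\ref{s: The higher rank case I}--\ref{s: Proof}. Judged this way, your proposal has a genuine gap at its decisive step: the weak type $(1,1)$ bound for convolution with the part of the kernel at infinity. You propose to handle the bad part of a Calder\'on--Zygmund decomposition ``by integrating the pointwise bound against the atoms, using the annular $L^1$ control of $k_B^\infty$''. This cannot work. Under the present hypotheses $k_B^\infty$ is in general \emph{not} integrable (this is the whole point of the theorem), and the critical majorants produced by the contour-shift estimates are kernels of the type $\tau_1^0$ in (\ref{f: estimatetau}), whose $L^1$ masses over unit annuli $\{R<\mod{H}<R+1\}$ with respect to $\e^{2\rho(H)}\wrt H$ do not decay summably (for $\tau_1^0$ they are in fact bounded below by a positive constant). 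No atom-by-atom integration against such a kernel can yield a weak $(1,1)$ estimate; what is needed is the genuinely nontrivial theorem that convolution with these specific non-$L^1$ kernels is of weak type $1$ --- Str\"omberg's result, reproved here as Proposition~\ref{p: debole per tau} via the Iwasawa decomposition, i.e.\ domination by $P(\OV n)^{3/2}\,\si(a)$ and reduction to the one-dimensional convolution with the character $\e_{-2\mod{\rho}}$, which maps $\lu{\nu_1}$ to $\lorentz{1}{\infty}{\nu_1}$ (Lemma~\ref{l: weak 1dim}) --- and it is applied to $\mod{f}$ directly, not through a decomposition of $f$. Moreover, the quantitative content that would let you dominate $k_B^\infty$ by such a kernel is missing: you never compute the exponent in $\Phi_\kappa$, and as written the bound $\mod{k_B^\infty(\exp H)}\leq C\,\e^{-\rho(H)}\Phi_\kappa(H)$ with $\Phi_\kappa$ of polynomial growth is useless against the measure $\e^{2\rho(H)}\wrt H$; after the shift one must exhibit decay of order $\e^{-2\rho(H)}$ (or $\e^{-\rho(H)-\mod{\rho}\mod{H}}$) times a precise power of $1+\mod{H}$, and it is exactly there that the order $[\![n/2]\!]+\ell/2+1$ and the restriction $\kappa<1$ must be spent (compare Lemmas~\ref{l: powers of Q}, \ref{l: pointwiseestimates}, \ref{l: inductivestep} and Remark~\ref{rem: integrale ion}). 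Asserting that these parameters ``combine exactly so that the factors balance'' is the conclusion to be proved, not an argument.

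There is also a circularity in your treatment of the local part. Cutting the kernel in space and claiming that $\sft{k_B^0}$ satisfies Anker's hypotheses because it ``differs from $m_B$ by a function with entire spherical transform'' is not justified: the entirety of $\sft{k_B^0}$ (Paley--Wiener) gives no size estimates on $\TW$, and controlling the difference $\sft{k_B^\infty}=m_B-\sft{k_B^0}$ requires precisely the decay of $k_B$ at infinity that you have not yet established; note also that $m_B$ itself may blow up like $d(\zeta)^{-\kappa}$, so it does not satisfy (\ref{f: pseudodiff estimates}). The standard repair is to split on the multiplier side rather than the kernel side, e.g.\ $B=B\,(1-\e^{-\cL})^{L}+B\,[1-(1-\e^{-\cL})^{L}]$ with $L$ large, as in Step~I of the proof of Theorem~\ref{t: main result}: the factor $(1-\wt h_1)^{L}$ vanishes to high order where $d$ vanishes, so Anker's theorem applies to the first piece, while the second carries a rapidly decaying factor that legitimises the contour shift and the kernel analysis at infinity. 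With that splitting, the correct pointwise kernel estimates, and Proposition~\ref{p: debole per tau} in place of the atom argument, your outline would become the intended proof; as it stands, the two central steps are either misdescribed or merely asserted.
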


\begin{proof}
The proof of this theorem is rather long and technical,
and follows the lines of the proof of the main
result in \cite{I3}.  We omit the details, because we are more
interested in a different condition on the multipliers. 
\end{proof}

\begin{remark} \label{remark: imaginary}
Note (see \cite{I2}) that if $\ell=1$ and $\kappa = 0$, then 
Theorem~\ref{t: weak1} applies to
the multiplier $m_{\cL^{iu}}$, when $u$ is real.
However, if $\ell \geq 2$, then the multiplier $m_{\cL^{iu}}$ 
does not belong to $\wt H(\TW;J,\kappa)$ for any $\kappa$ in $[0,1]$.
We prove this in the case where $\kappa = 0$.

Indeed, suppose that $\Re(\zeta)$ is small.  A straightforward
computation shows that 
\begin{equation} \label{f: imaginary powers 1}
d(\zeta) \, \mod{\partial_{\zeta_\ell} m_{\cL^{iu}}(\zeta)}
= 2\, \mod{u} \, d(\zeta) \, \frac{\mod{\zeta_\ell}}{\mod{Q(\zeta)}}
\, \e^{-u \arg Q(\zeta)}
\quant \zeta\in \TW.
\end{equation}
Here $\zeta = (\zeta_1,\ldots,\zeta_\ell)$, and 
$\zeta_1,\ldots,\zeta_\ell$ are the
co-ordinates described in the Introduction.
We show that if $\ell\geq 2$, then the right hand side cannot
possibly stay bounded when $\zeta$ tends to $i\rho$ in $\OTW$.
Write $\zeta = \xi + i \eta$, where $\xi$ is in $\fra^*$
and $\eta$ is in $\bW$. Suppose that $\xi \neq 0$, and let
$\eta$ tend to $\rho$ within~$\bW$. By continuity, the right
hand side of (\ref{f: imaginary powers 1}) tends to
$$
2 \, \mod{u} \, d\bigl(\xi+i \, \rho\bigr) \,
\frac{\mod{\xi_\ell+i\, \rho}}{\mod{Q\bigl(\xi+i\, \rho\bigr)}}
\, \e^{-u \arg Q(\xi+i\eta)}.
$$
Now, $d(\xi+i\, \rho) = \mod{\xi}$ and
$Q\bigl(\xi+i \, \rho\bigr)
= \mod{\xi}^2 + 2i\, \prodo{\xi}{\rho}$.  Therefore,
if $\xi$ is orthogonal to $\rho$, then the right hand side of
(\ref{f: imaginary powers 1}) becomes
$
2 \, \mod{u} \, \mod{\xi} \, {\mod{\rho}}/{\mod{\xi}^2},
$
which tends to infinity when $\xi$ tends to $0$, as required.
\end{remark}

Denote by $\bP$ the parabolic region in the plane defined by
$$
\bP = \{(x,y)\in \BR^2: y^2 < 4 \prodo{\rho}{\rho} \, x \}.
$$
Note that $\bP$ is the image of $\TW$ under $Q$.
If $M(\cL)$ is in $\GOp{q}$ for
all $q$ in $(1,\infty)$, then its spherical multiplier 
$M\circ Q$ is holomorphic in~$\TW$ by the Clerc--Stein condition,
and $M$ is holomorphic in $\bP$.
This partially motivates the definition below. 

\begin{definition}
Suppose that $J$ is a nonnegative 
integer and that $\kappa$ is in $[0,\infty)$.
Denote by $\fH (\bP;J,\kappa)$
the space of all holomorphic functions $M$ in~$\bP$
such that $\norm{M}{\fH(\bP;J,\kappa)} < \infty$, where
$\norm{M}{\fH(\bP;J,\kappa)}$ is the infimum of all constants $C$
such that
$$
\mod{M^{(j)}(z)}
\leq \frac{C}{\min\bigl(\mod{z}^{\kappa+j}, \mod{z}^j \bigr)}
\quant {z\in \bP} \quant j \in \{0,1,\ldots, J\}.
$$
\end{definition}

Clearly for each $\be$ such that $\Re \be \geq 0$
the function $z\mapsto z^\be$ is in $\fH(\bP;J, \Re\be)$
for all $J\geq 0$. 
Note that if $M$ is holomorphic in $\bP$, then $M\circ Q$ is, in fact,
Weyl invariant and holomorphic in $T_{\bB}$.
In Proposition~\ref{p: McircQ0} below we prove that if 
$M$ is in $\fH(\bP;J,\kappa)$, then $M\circ Q$ is in  the 
space $H(\TB;J, \kappa)$, which we now define.  

\begin{definition} \label{def: moltiplicatori II}
Suppose that $J$ is a positive integer, $\kappa$ is in $[0,\infty)$,
and assume that $\bE$ is a convex neighbourhood of the origin in $\fra^*$.
Denote by $H(\TE;J,\kappa)$ the space of
all holomorphic functions $m$ in $\TE$ for which
$\norm{m}{H(\TE;J,\kappa)}<\infty$, where
$\norm{m}{H(\TE;J,\kappa)}$ is the infimum of all constants
$C$ such that 
$$
\bigmod{D^{I} m(\zeta)}
\leq \frac{C}{ \min \bigl(\mod{Q(\zeta)}^{\kappa+\mod{I}},
\mod{Q(\zeta)}^{\mod{I}/2}\bigr)}
\quant{\zeta\in \TEpiu} \,
\quant {I: \mod{I} \leq J}.
$$
See Section~\ref{s: Notation}
for the definition of $\bE^+$.
\end{definition}

In the rest of the paper we shall consider spaces
$H(\TE;J,\kappa)$ when $\bE$ is either $\Ball{\phantom{}}$ or
$\Ball{\phantom{}}^t$ for some $t$ in $\BR^-$.

\begin{proposition} \label{p: McircQ0}
Suppose that $J$ is a nonnegative integer and that $\kappa$
is in $[0,\infty)$.
Then there exists a constant $C$ such that 
$$
\norm{M\circ Q}{H(\TB;J,\kappa)}
\leq C \, \norm{M}{\fH(\bP;J, \kappa)}.
\quant M \in \fH(\bP;J,\kappa).
$$
\end{proposition}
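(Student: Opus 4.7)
The plan is to apply the Fa\`a di Bruno formula to $M\circ Q$, exploiting the fact that $Q$ is a polynomial of degree two. I would first verify holomorphy: for $\zeta=\xi+i\eta\in\TB$ one has $|\eta|<|\rho|$, so that Cauchy--Schwarz together with the strict inequality $(|\rho|^2-|\eta|^2)(|\xi|^2+|\rho|^2)>0$ gives
$$
(\Im Q(\zeta))^2=4\prodo{\xi}{\eta}^2\leq 4|\xi|^2|\eta|^2<4|\rho|^2(|\xi|^2+|\rho|^2-|\eta|^2)=4|\rho|^2\Re Q(\zeta).
$$
Hence $Q(\TB)\subset \bP$, and $M\circ Q$ is holomorphic on $\TB$, in particular on $\TBpiu$.

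Next, because $Q$ has degree two, $\partial^\alpha Q\equiv 0$ for $|\alpha|\geq 3$, and the multivariate Fa\`a di Bruno formula collapses into
$$
D^I(M\circ Q)(\zeta)=\sum_{k=\lceil |I|/2\rceil}^{|I|} M^{(k)}(Q(\zeta))\,P_{I,k}(\zeta),
$$
where $P_{I,k}$ is a finite sum over partitions of $I$ into $k$ nonempty sub-multi-indices of length $1$ or $2$. A partition with $a$ parts of length $1$ and $b$ parts of length $2$ must satisfy $a+b=k$ and $a+2b=|I|$, forcing $a=2k-|I|$ and $b=|I|-k$. Since each length-$1$ part contributes a linear factor $\partial_j Q(\zeta)=2\zeta_j$ and each length-$2$ part contributes a constant, I would deduce
$$
|P_{I,k}(\zeta)|\leq C\,|\zeta|^{2k-|I|}.
$$

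The third ingredient is the elementary geometric inequality $|\zeta|^2\leq |Q(\zeta)|+|\rho|^2$ on $\TB$, immediate from $\Re Q(\zeta)=|\xi|^2+|\rho|^2-|\eta|^2\geq |\xi|^2$ (hence $|\xi|^2\leq |Q(\zeta)|$) together with $|\eta|^2\leq |\rho|^2$. In particular, $|\zeta|$ is bounded by a constant depending only on $|\rho|$ when $|Q(\zeta)|\leq 1$, while $|\zeta|\leq C\,|Q(\zeta)|^{1/2}$ when $|Q(\zeta)|\geq 1$. I would then conclude by a dichotomy on the size of $|Q(\zeta)|$. If $|Q(\zeta)|\leq 1$, use $|M^{(k)}(Q(\zeta))|\leq \norm{M}{\fH(\bP;J,\kappa)}\,|Q(\zeta)|^{-\kappa-k}$ together with the fact that $|Q(\zeta)|^{-\kappa-k}\leq |Q(\zeta)|^{-\kappa-|I|}$ for $k\leq |I|$ and $|Q(\zeta)|\leq 1$; combined with the boundedness of $|\zeta|^{2k-|I|}$, this bounds each summand by $C\norm{M}{\fH(\bP;J,\kappa)}|Q(\zeta)|^{-\kappa-|I|}$. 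If $|Q(\zeta)|\geq 1$, use $|M^{(k)}(Q(\zeta))|\leq \norm{M}{\fH(\bP;J,\kappa)}\,|Q(\zeta)|^{-k}$ and $|\zeta|^{2k-|I|}\leq C|Q(\zeta)|^{k-|I|/2}$ to bound each summand by $C\norm{M}{\fH(\bP;J,\kappa)}|Q(\zeta)|^{-|I|/2}$. Summing the finitely many terms in $k$ then yields exactly the estimate required by Definition~\ref{def: moltiplicatori II}.

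I anticipate no serious obstacle here; the only slightly delicate point is the combinatorial bookkeeping in the Fa\`a di Bruno expansion and verifying that the bound $|P_{I,k}(\zeta)|\leq C|\zeta|^{2k-|I|}$ is valid uniformly in $k$. The driving observation is that the two branches in the definition of $\norm{\cdot}{H(\TB;J,\kappa)}$ correspond precisely to the two regimes $|Q(\zeta)|\leq 1$ and $|Q(\zeta)|\geq 1$ distinguished by the elementary inequality $|\zeta|^2\leq |Q(\zeta)|+|\rho|^2$ on $\TB$.
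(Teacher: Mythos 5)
Your proof is correct and follows essentially the same route as the paper: the Fa\`a di Bruno expansion you write down is exactly the paper's formula $D^I(M\circ Q)(\zeta)=\sum_{0\leq P\leq I/2}c_P\,\zeta^{I-2P}\,M^{(\mod{I}-\mod{P})}(Q(\zeta))$ (obtained there by induction), and your two regimes $\mod{Q(\zeta)}\leq 1$ and $\mod{Q(\zeta)}\geq 1$ play the same role as the paper's split into $\Ball{\phantom{}}+i\bB^+$ and $\bigl(\fra^*\setminus\Ball{\phantom{}}\bigr)+i\bB^+$, via the equivalent elementary comparison between $\mod{\zeta}$ and $\mod{Q(\zeta)}^{1/2}$. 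Your explicit verification that $Q(\TB)\subset\bP$ is a welcome detail the paper only asserts implicitly.
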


\begin{proof}
Suppose that $I$ is a multiindex.
A straightforward induction argument shows that
there exist constants $c_P$ such that
\begin{equation} \label{f: induction}
D^I (M\circ Q)(\zeta)
= \sum_{0\leq P\leq I/2} c_P\,\, \zeta^{I-2P}\,
M^{(\mod{I}-\mod{P})}\bigl(Q(\zeta)\bigr)
\quant \zeta \in \TBpiu.
\end{equation}
Observe that if $\zeta$ is bounded, then so is $\mod{Q(\zeta)}$.
Since $M$ is in $\fH(\bP;J,\kappa)$,
$$
\begin{aligned}
\mod{D^I (M\circ Q)(\zeta)}
& \leq C\, \norm{M}{\fH(\bP;J,\kappa)} \,
            \sum_{0\leq P\leq I/2} \mod{\zeta}^{\mod{I}-2\mod{P}}\,
            \bigmod{Q(\zeta)}^{-\kappa-\mod{I}+\mod{P}}\\
& \leq C\, \norm{M}{\fH(\bP;J,\kappa)} \,\sum_{0\leq P\leq I/2}
            \bigmod{Q(\zeta)}^{-\kappa-\mod{I}+\mod{P}}\\
& \leq C\,\norm{M}{\fH(\bP;J,\kappa)} \,\bigmod{Q(\zeta)}^{-\kappa-\mod{I}}
\quant\zeta\in \Ball{\phantom{}}+i\bB^+.
\end{aligned}
$$
If, instead, $\zeta \in \bigl(\fra^*\setminus \Ball{\phantom{}}\bigr)+i\bB^+$,
then $\mod{Q(\zeta)} \geq C\,\mod{\zeta}^2$
for some positive constant $C$.  Hence 

$$
\begin{aligned}
\mod{D^I (M\circ Q)(\zeta)}
& \leq C\, \norm{M}{\fH(\bP;J,\kappa)} \,
            \sum_{0\leq P\leq I/2} \mod{\zeta}^{\mod{I}-2\mod{P}}\,
            \bigmod{Q(\zeta)}^{-\mod{I}+\mod{P}}\\
& \leq C\,\norm{M}{\fH(\bP;J,\kappa)} \,\bigmod{Q(\zeta)}^{-\mod{I}/2}
\quant\zeta\in \bigl(\fra^*\setminus \Ball{\phantom{}}\bigr)+i\bB^+.
\end{aligned}
$$
Thus, $M\circ Q$ is in $H(\TB;J,\kappa)$ and 
$\norm{M\circ Q}{H(\TB;J,\kappa)}\leq C\,\norm{M}{\fH(\bP;J,\kappa)}$, 
as required.
\end{proof}

In the higher rank case most spherical multipliers 
are not of the form $M\circ Q$ with $M$ holomorphic in $\bP$,
and, in general do not extend to holomorphic functions
in a region larger than $\TW$. 
We would like to prove a result which applies
to multipliers of the form $m\, (M\circ Q)$,
where 
$M$ is in $\fH(\bP;J,\kappa)$, and that
$m$ is holomorphic and bounded in $\TW$ and satisfies
estimates (\ref{f: pseudodiff estimates}).
To introduce the appropriate function space
we need more notation. 
For every multi-index $I = (i_1,i_2,\ldots,i_\ell)$
we shall denote by $D^I$ the differential operator
$
\partial_{\zeta_1}^{i_1} \partial_{\zeta_2}^{i_2}
\cdots \partial_{\zeta_\ell}^{i_\ell},
$
where $\zeta = \xi+i \eta$, $\xi$ and $\eta$ are in $\fra^*$,  
$\zeta_j = \xi_j + i \eta_j$, and $(\xi_1, \ldots, \xi_\ell)$ and
$(\eta_1, \ldots, \eta_\ell)$ are the co-ordinates of $\xi$ and $\eta$
with respect to the basis $\vep_1, \ldots, \vep_{\ell-1}, \rho/\mod{\rho}$,
defined in the Introduction.  

\begin{definition} \label{def: moltiplicatori}
Suppose that $J$ is a positive integer and that $\kappa$ is in $[0,\infty)$,
and assume that $\bE$ is a convex neighbourhood of the origin 
in $\fra^*$.
Denote by $H'(\TE;J,\kappa)$ the space of
all holomorphic functions $m$ in $\TE$ for which
$\norm{m}{H'(\TE;J,\kappa)}<\infty$, where
$\norm{m}{H'(\TE;J,\kappa)}$ is the infimum of all constants $C$
such that 
$$
\bigmod{D^{(I',i_\ell)} m(\zeta)} 
\leq \frac{C}{\min \bigl(\mod{Q(\zeta)}^{\kappa+i_\ell +\mod{I'}/2}, 
\mod{Q(\zeta)}^{(\mod{I'}+i_\ell)/2}\bigr) }
\quant \zeta\in \TEpiu
$$
for all multiindices $(I',i_\ell)$ for which $\mod{I'}+i_\ell \leq J$.
\end{definition}

In the rest of the paper we shall consider spaces 
$H'(\TE;J,\kappa)$, where $\bE$ is either $\bW$, or~$\bW^t$
for some $t$ in $\BR^-$.
Observe that the functions in $H'(\TWpiut;J, \kappa)$ satisfy on $\TWpiu$
the same estimates that functions in $H'(\TW;J,\kappa)$ satisfy,
but they need not be holomorphic in the whole tube $\TW$.
A similar observation applies to functions in the spaces
$H(\TB;J, \kappa)$ and $H(\TBpiut;J, \kappa)$ defined above.

\begin{remark} \label{rem: nonisotropic cond}
Suppose that $m$ is in $H'(\TW;J,\kappa)$ and that
the function $\xi \mapsto m(\xi+i \rho)$
is smooth on $\fra^* \setminus\{0\}$. By a continuity argument
for each multiindex $(I',i_\ell)$ with $\mod{I'}+i_\ell \leq J$ the function
$m$ satisfies
\begin{equation} \label{f: AMH 1}
\mod{D^{(I',i_\ell)} m(\xi+i\rho)} \leq
\frac{{\norm{m}{H'(\TW;J,\kappa)}}}{\min\bigl(\mod{
Q(\xi+i\rho)}^{\kappa+i_\ell+ \mod{I'}/2},
\mod{Q(\xi+i\rho)}^{(i_\ell+\mod{I'})/2}\bigr)}
\quant \xi\in \fra^*\setminus\{0\}.
\end{equation}
Note that
$\min \bigl(\mod{Q(\zeta)}^{\kappa+i_\ell+\mod{I'}/2},
\mod{Q(\zeta)}^{(i_\ell+\mod{I'})/2}\bigr)$
is equal to $\mod{Q(\zeta)}^{\kappa+i_\ell+\mod{I'}/2}$ if 
$\mod{\zeta}$ is small
and to $\mod{Q(\zeta)}^{(i_\ell+\mod{I'})/2}$ if $\mod{\zeta}$ is large.
Furthermore $\mod{Q(\xi+i \rho)} =\mod{\mod{\xi}^2+2i \, \prodo{\xi}{\rho}}$.
Thus,
$$
\mod{Q(\xi+i\rho)}
\asymp \left\{
\begin{array}{ll}
\mod{\xi}^2 & \hbox{if $\xi$ is either large, or small and $\xi\perp\rho$} \\
\mod{\xi} & \hbox{if $\xi = c\, \rho$ for $c\in \BR^+$ small.}
\end{array}
\right.
$$
Then, fr{}om (\ref{f: AMH 1}) we deduce that
$$
\mod{D^{(I',i_\ell)} m(\xi+i\rho)} \leq 
\left\{
\begin{array}{ll}
{\norm{m}{H'(\TW;J,\kappa)}} \,\mod{\xi}^{-\mod{I'}-i_\ell} 
          & \hbox{if $\xi$ is large} \\
{\norm{m}{H'(\TW;J,\kappa)}} \,\mod{\xi}^{-(\kappa+i_\ell+\mod{I'}/2)}
           & \hbox{if $\xi = c\, \rho$ for $c\in \BR^+$ small}\\
{\norm{m}{H'(\TW;J,\kappa)}} \,\mod{\xi}^{-(2\kappa+2i_\ell + \mod{I'})}
           & \hbox{if $\xi$ is small and $\xi \perp \rho$.}
\end{array}
\right.
$$
In particular, if $\kappa =0$, then
the function $m(\cdot+i\rho)$ satisfies a standard
Mihlin--H\"ormander condition of order $J$ at infinity on $\fra^*$
and a nonisotropic Mihlin--H\"ormander condition of order $J$
near the origin. A similar anisotropy was noticed
in \cite[Thm~1~\rmvii\ and \rmix]{CGM1} in connection with the kernel of
the (modified) Poisson semigroup.
\end{remark}

In the next proposition we prove that
if $M \in \fH(\bP;J,\kappa)$, then the restriction of 
$M\circ Q$ to $\TW$ belongs to $H'(\TW;J,\kappa)$.
A straightforward calculation then implies that if 
$m$ is  holomorphic and bounded in $\TW$ and satisfies
estimates (\ref{f: pseudodiff estimates}),
then the product $m\, (M\circ Q)$ is in $H'(\TW;J,\kappa)$.

\begin{proposition} \label{p: McircQ}
Suppose that $J$ is a nonnegative integer, and that $\kappa$
is in $[0,\infty)$.
Then there exists a constant $C$ such that 
$$
\norm{M\circ Q}{H'(\TW;J,\kappa)}
\leq C \, \norm{M}{\fH(\bP;J,\kappa)}
\quant M \in \fH(\bP;J,\kappa).
$$
\end{proposition}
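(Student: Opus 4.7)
The plan is to adapt the proof of Proposition~\ref{p: McircQ0}, using the very same Fa\`a di Bruno identity (\ref{f: induction}) but refining the estimate on the region where $|\zeta|$ is small so as to capture the anisotropy between the $\rho^\perp$ directions and the $\rho$ direction which is built into the $H'(\TW;J,\kappa)$-norm. As in that proof I would split $\TWpiu$ into the outer piece $(\fra^*\setminus \bB) + i\bW^+$ and the bounded inner piece $\bB + i\bW^+$.

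The outer piece is essentially a rerun of the argument in Proposition~\ref{p: McircQ0}: there $|Q(\zeta)| \asymp |\zeta|^2$, and bounding $|\zeta_k| \leq |\zeta|$ for every $k$ together with $|M^{(j)}(Q(\zeta))| \leq C\,\norm{M}{\fH(\bP;J,\kappa)}\,|Q(\zeta)|^{-j}$ reproduces the calculation of Proposition~\ref{p: McircQ0} and gives $|D^{(I',i_\ell)}(M\circ Q)(\zeta)| \leq C\,\norm{M}{\fH(\bP;J,\kappa)}\,|Q(\zeta)|^{-(|I'|+i_\ell)/2}$, matching the required bound at infinity.

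For the inner piece the key ingredient is the anisotropic geometric bound
$$
|\zeta_k|^2 \leq C\, |Q(\zeta)| \quad (k = 1, \ldots, \ell-1), \qquad |\zeta_\ell| \leq C \qquad \forall\,\zeta \in \bB + i\bW^+.
$$
Writing $\zeta = \xi + i\eta$ with $\eta \in \bW^+$, the fact that $|\eta| < |\rho|$ throughout $\bW$ yields $\Re Q(\zeta) = |\xi|^2 + |\rho|^2 - |\eta|^2 > 0$, whence $|Q(\zeta)| \geq |\xi|^2 + (|\rho|^2 - |\eta|^2)$; this immediately gives $|\xi_k|^2 \leq |Q(\zeta)|$. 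It remains to establish $|\eta_k|^2 \leq C(|\rho|^2 - |\eta|^2)$ for $k < \ell$. Because $\rho$ is the unique point of $\bW$ maximising $\eta\mapsto\eta_\ell = \eta\cdot\rho/|\rho|$ (every other Weyl translate $w\rho$ satisfies $w\rho\cdot\rho/|\rho| < |\rho|$), a convexity argument shows that the tangent cone to $\bW$ at $\rho$ lies strictly inside $\{\delta : \delta_\ell > 0\}$, so writing $\eta = \rho - \delta$ gives $\delta_\ell \geq c|\delta|$ for some $c>0$, and for $\delta_\ell$ small $|\eta_k|^2 = \delta_k^2 \leq c^{-2}\delta_\ell^2 \leq C\delta_\ell \leq C'(|\rho|^2 - |\eta|^2)$, using $|\rho|^2 - |\eta|^2 = 2|\rho|\delta_\ell - |\delta|^2 \geq |\rho|\delta_\ell$. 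Away from $\rho$ the quantity $|\rho|^2 - |\eta|^2$ is bounded below by a positive constant and the bound is trivial.

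Feeding the geometric bound into the Fa\`a di Bruno identity together with $|M^{(j)}(Q(\zeta))| \leq C\,\norm{M}{\fH(\bP;J,\kappa)}\,|Q(\zeta)|^{-\kappa - j}$ (valid because $|Q(\zeta)|$ stays bounded on $\bB + i\bW^+$), each summand is majorised by
$$
C\,|Q(\zeta)|^{(|I'|-2|P'|)/2}\,|Q(\zeta)|^{-\kappa - |I'| - i_\ell + |P'| + p_\ell}
= C\,|Q(\zeta)|^{-\kappa - |I'|/2 - i_\ell + p_\ell},
$$
which, since $p_\ell \geq 0$ and $|Q(\zeta)|$ is bounded above, is at most $C\,\norm{M}{\fH(\bP;J,\kappa)}\,|Q(\zeta)|^{-\kappa - i_\ell - |I'|/2}$. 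Summing over the finitely many multiindices $P$ concludes the proof. The delicate point is the geometric inequality for $|\eta_k|$; it is there that one essentially uses that $\rho$ is an extremal vertex of the polyhedron $\bW$ rather than a generic boundary point.
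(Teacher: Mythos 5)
Your proposal is correct, and its skeleton is the same as the paper's: the same identity (\ref{f: induction}), the same splitting of $\TWpiu$ into $(\fra^*\setminus\bB)+i\bW^+$ and $\bB+i\bW^+$, and the same key inequality $\mod{\zeta'}^2\leq C\,\mod{Q(\zeta)}$ on the inner piece, after which the algebra with the exponents is identical. Where you genuinely diverge is in the proof of that inequality. The paper proves it via Helgason's description $\bW^+=(\fra^*)^+\cap\bigl(\rho-{}^+(\fra^*)\bigr)$ and the cone inclusions (\ref{f: inclusion between cones}), working in an explicit neighbourhood $\bV$ of $\rho$ and obtaining the (stronger, linear) bound $\mod{\rho}^2-\mod{\eta}^2\geq (\mod{\rho}/c_1)\mod{\eta'}$; you instead use only that $\rho$ is a vertex of the polytope $\OV\bW=\mathrm{conv}\{w\cdot\rho\}$, so that writing $\eta=\rho-\delta$ with $\delta$ in the cone generated by $\{\rho-w\cdot\rho\}$ gives $\delta_\ell\geq c\,\mod{\delta}$, hence the quadratic bound $\mod{\eta'}^2\leq C\,(\mod{\rho}^2-\mod{\eta}^2)$ near $\rho$ — which is all that is needed. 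Your route is more elementary (no root-system lemmas beyond convexity of the Weyl orbit hull), at the price of a weaker intermediate estimate. One point you should make explicit: the statement ``away from $\rho$ the quantity $\mod{\rho}^2-\mod{\eta}^2$ is bounded below'' is false on all of $\bW$ (it degenerates near every other vertex $w\cdot\rho$); it holds on the range you actually use, namely $\eta\in\bW^+$, because $\rho$ is the only point of $\OV{\bW}\cap\OV{(\fra^*)^+}$ of norm $\mod{\rho}$, so a compactness argument on that set applies. With that one sentence added, the argument is complete.
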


\begin{proof}
By arguing as in the proof of 
Proposition~\ref{p: McircQ0}, 
we see that there exists a constant~$C$ such~that 
\begin{equation}\label{f: zetabig}
\mod{D^{(I',i_\ell)} (M\circ Q)(\zeta)}
\leq C\, \norm{M}{\fH(\bP;J,\kappa)} \,
\bigmod{Q(\zeta)}^{-(i_\ell+\mod{I'})/2}
\quant \zeta \in \bigl(\fra^*\setminus \Ball{\phantom{}}\bigr)+i\bW^+.
\end{equation}

We claim that there exists a constant $C$ such that
\begin{equation}\label{f: claim}
\mod{\zeta'}
\leq C \, \mod{Q(\zeta)}^{1/2}
\quant \zeta \in \Ball{\phantom{}}+i\bW^+.
\end{equation}

Given the claim, we indicate how to conclude the proof
of the proposition.
Write $I$ for the multiindex $(I',i_\ell)$.
Note that (\ref{f: induction}), the assumption $M\in \fH(\bP;J,\kappa)$
and (\ref{f: claim}) imply~that there exists a constant
$C$ such~that 
$$
\begin{aligned}
\mod{D^{(I',i_\ell)} (M\circ Q)(\zeta)}
& \leq C\, \norm{M}{\fH(\bP;J,\kappa)} \,
           \sum_{0\leq P\leq I/2} \mod{\zeta_1}^{(i_\ell-2p_\ell)}\,
           \mod{\zeta'}^{\mod{I'}-2\mod{P'}}\,
           \bigmod{Q(\zeta)}^{-\kappa-i_\ell-\mod{I'}+\mod{P}} \\
& \leq C\, \norm{M}{\fH(\bP;J,\kappa)} \,\sum_{0\leq P\leq I/2}
           \bigmod{Q(\zeta)}^{ \mod{I'}/2-\mod{P'} }\,
           \bigmod{Q(\zeta)}^{-\kappa-i_\ell-\mod{I'}+\mod{P}}\\
& \leq C\,\norm{M}{\fH(\bP;J,\kappa)} \,\bigmod{Q(\zeta)}^{-\kappa-
           i_\ell-\mod{I'}/2}
           \quant \zeta \in \Ball{\phantom{}}+i\bW^+. 
\end{aligned}
$$
The required conclusion follows directly fr{}om this estimate
and (\ref{f: zetabig}).

It remains to prove the claim.
We abuse the notation and denote by $\Cone{c_1}$
the cone $\{(\la',\la_\ell) \in \fra^*: \mod{\la'} < c_1\, \la_\ell\}$.
By \cite[Lemma~8.3]{H1} 
$\bW^+ = (\fra^*)^+ \cap \bigl(\rho - {}^+(\fra^*)\bigr)$,
where ${}^+(\fra^*)$ denotes the dual cone of $(\fra^*)^+$.
Recall that ${}^+ (\fra^*) \subset \Cone{c_1}$ (see (\ref{f: inclusion
between cones})), so that 
$\bW^+ \subset (\fra^*)^+ \cap \bigl(\rho -\Cone{c_1}\bigr)$.
Suppose that $c$ is a number such that $(c_1^2-1)/(c_1^2+1) < c < 1$.
Set
$
\bV = \{(\eta',\eta_\ell): c \,\mod{\rho}< \eta_\ell<\mod{\rho},\,
\mod{\eta'} < c_1\,(\mod{\rho}-\eta_1) \}.
$
If $c$ is sufficiently close to $1$, then $\bV \subset (\fra^*)^+$.

Observe that (\ref{f: claim}) is obvious when
$\zeta$ is in $\Ball{\phantom{}} + i (\bW^+ \setminus \bV)$. 
Indeed, both sides of (\ref{f: claim})
are continuous functions of $\zeta$, and $\zeta$
stays at a positive distance from $i\rho$, which is
the unique point in $\OTWpiu$ where $Q$ vanishes.

Now suppose that $\zeta$ is in $\Ball{\phantom{}} + i (\bW^+ \cap \bV)$,
and write $\zeta = \xi + i \eta$.
Note that
$$
\begin{aligned}
\mod{Q(\zeta)}^2
& = (\mod{\xi}^2+\mod{\rho}^2-\mod{\eta}^2)^2
        +4\mod{\prodo{\xi}{\eta}}^2\\
& \geq (\mod{\xi}^2+\mod{\rho}^2-\mod{\eta}^2)^2.
\end{aligned}
$$
Furthermore
$$
\begin{aligned}
\mod{\rho}^2-\mod{\eta}^2
& =\mod{\rho}^2-\eta_1^2-\mod{\eta'}^2 \\
& \geq \mod{\rho}^2-\eta_1^2-c_1^2(\mod{\rho}-\eta_1)^2\\
& = \bigl(\mod{\rho}-\eta_1 \bigr)\,
        \bigl(\mod{\rho}+\eta_1-c_1^2\mod{\rho}+c_1^2\eta_1 \bigr)\\
& \geq \mod{\rho} \, \bigl(\mod{\rho}-\eta_1 \bigr)\,
        \bigl[1-c_1^2+ c \, (1+c_1^2) \bigr].
\end{aligned}
$$
Since $c$ has been chosen so that $1-c_1^2+ c \, (1+c_1^2) >1$, 
$$
\mod{\rho}^2-\mod{\eta}^2
\geq (\mod{\rho}-\eta_1)\,\mod{\rho}
\geq \smallfrac{\mod{\rho}}{c_1}\,\mod{\eta'}.
$$
Therefore
$$
\mod{Q(\zeta)}^2
 \geq C\,(\mod{\xi}^2+\mod{\eta'})^2
 \geq C\,(\mod{\xi'}^4+\mod{\eta'}^2)
 \geq C\,\mod{\zeta'}^4.
$$
This completes the proof of the claim (\ref{f: claim}), 
and of the proposition.
\end{proof}

Now we state our main result.  Its proof is deferred to Section~\ref{s:
Proof}.
Given $B$ in $\GOp{2}$, we denote by
$\opnorm{B}{1;1,\infty}$ the quasi-norm of $B$
\emph{qua} operator from $\lu{X}$ to $\lorentz{1}{\infty}{X}$.

\begin{theorem} \label{t: main result}
Denote by $J$ the integer $[\![n/2]\!]+1$.
The following hold:
\begin{enumerate}
\item[\itemno1]
if $\kappa$ is in $[0,1)$, then there exists a constant $C$ such that
for all $B$ in $\GOp{2}$ for which $m_B$ is in $H'(\TW;J,\kappa)$
$$
\opnorm{B}{1;1,\infty}
\leq C \, \norm{m_B}{H'(\TW;J,\kappa)};
$$
\item[\itemno2]
there exists a constant $C$ such that 
$$
\opnorm{M(\cL)}{1;1,\infty}
\leq C \, \norm{M}{\fH(\bP;J,1)}
\quant M \in \fH(\bP;J,1).
$$
\end{enumerate}
\end{theorem}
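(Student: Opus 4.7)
The plan is to reduce Theorem~\ref{t: main result} to the two types of estimates developed in Sections~\ref{s: The higher rank case I} and~\ref{s: Kernel estimates}, via the standard local/global splitting of the multiplier. Specifically, I would choose a smooth Weyl-invariant cutoff $\chi$ on $\TW$, localised near the singular point $i\rho$ of $Q$, and write $m_B = (1-\chi)\, m_B + \chi\, m_B$, with associated kernels $k_B^{\mathrm{loc}}$ and $k_B^{\mathrm{glob}}$. Since $\mod{Q(\zeta)}$ is bounded away from zero on the support of $1-\chi$, the anisotropic condition \eqref{f: anisotropic MH condition} for $(1-\chi)m_B$ reduces to the uniform pseudodifferential estimate \eqref{f: pseudodiff estimates}. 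Anker's theorem \cite{A1} then gives that convolution with $k_B^{\mathrm{loc}}$ is of weak type~$1$ with quasi-norm controlled by $\norm{m_B}{H'(\TW;J,\kappa)}$.

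For the global part I would apply the pointwise and integral kernel estimates proved in Section~\ref{s: Kernel estimates}. These should show that $k_B^{\mathrm{glob}}(\exp H)$, once the exponential factor $\e^{-\rho(H)}$ is stripped off via the Trombi--Varadarajan asymptotics of Theorem~\ref{t: asymptoticexpansion}, is controlled by model kernels which scale correctly with respect to the parabolic dilations $(H', H_\ell) \mapsto (\vep H', \vep^2 H_\ell)$ encoded in $\cN$ and $\Parabolic$. That is precisely the setting in which the Strömberg-type theorem of Section~\ref{s: The higher rank case I}, embodied in formula~\eqref{f: estimatetau}, delivers weak type~$1$ bounds. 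Assembling these yields part~\itemno1; the restriction $\kappa<1$ enters because the Section~\ref{s: The higher rank case I} hypotheses leave just enough room to absorb the Plancherel growth $(1+\mod{\zeta})^{(n-\ell)/2}$ encoded in~\eqref{f: derivata c check}.

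Part~\itemno2, the borderline case $\kappa = 1$, is the main obstacle. At $\kappa = 1$ the bare bounds \eqref{f: anisotropic MH condition} no longer give summable contour integrals when pushed through the Plancherel formula, because differentiating in $\zeta_\ell$ near $i\rho$ does not improve the decay any further. The hypothesis that $B = M(\cL)$ with $M \in \fH(\bP;J,1)$ is used exactly here: as in the proofs of Propositions~\ref{p: McircQ0} and~\ref{p: McircQ}, the product structure $m_B = M \circ Q$ permits trading derivatives in $\zeta'$ for derivatives in $\zeta_\ell$ accompanied by polynomial factors in $\zeta'$, which are absorbed by powers of $\mod{Q(\zeta)}^{1/2}$ via the claim \eqref{f: claim}. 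Exploiting this interchange in the global kernel bound should recover the missing half derivative and bring the integral into the form required by the Section~\ref{s: The higher rank case I} machinery.

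Having assembled the local bound, the global bound for $\kappa \in [0,1)$, and the analogous global bound for spectral multipliers at $\kappa=1$, Theorem~\ref{t: main result} follows by summing the two weak type~$1$ contributions on $\lu{X}$. The hardest step will be the $\kappa=1$ global kernel estimate for $M(\cL)$: the anisotropy of~\eqref{f: anisotropic MH condition} and the product structure of $M\circ Q$ must be matched simultaneously against the parabolic geometry of $\fra$ and integrated against the non-integrable Plancherel density near~$i\rho$, in a manner that covers the motivating example $\cL^{-\al/2}$ with $\Re\al = 2$ and shows the sharpness stated after the theorem.
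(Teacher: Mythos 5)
Your high-level plan (isolate the singularity of $m_B$ at $i\rho$, treat the regular piece by Anker's theorem, and treat the singular piece by the kernel estimates of Section~\ref{s: Kernel estimates} combined with the Str\"omberg-type bounds of Proposition~\ref{p: debole per tau}) is the right one, but the decomposition you propose would not run. A smooth Weyl-invariant cutoff $\chi$ supported near $i\rho$ destroys holomorphy, and holomorphy is indispensable on both sides of your splitting: the kernel bounds of Lemma~\ref{l: pointwiseestimates} are obtained by shifting the contour of integration to $\fra^*+i(1-\vep)\rho$, which requires the multiplier piece to be holomorphic in a tube, and Anker's theorem likewise concerns multipliers holomorphic in $\TW$. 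Since no holomorphic function can be identically $1$ near $i\rho$ and $0$ elsewhere, the paper instead splits spectrally: $B=B_1+B_2$ with $B_1=B(1-\e^{-\cL})^L$ and $B_2=B[1-(1-\e^{-\cL})^L]$, $L>\kappa+J$. The factor $(1-\e^{-Q})^L$ vanishes to high order at $i\rho$, so $m_{B_1}$ has bounded derivatives and Anker applies, while $m_{B_2}$ carries a factor comparable to $\e^{-Q}$ at infinity, which is exactly what Lemma~\ref{l: pointwiseestimates} (stated for $m\,\e^{-Q/2}\in\lu{\fra^*}$) needs. Moreover, your sketch handles the singular piece only via the Harish-Chandra expansion (\ref{f: HC expansion}), which is usable only away from the walls of $\fra^+$; a second, \emph{spatial} cutoff $\psi$ and the whole near-wall analysis (Lemma~\ref{l: inductivestep}, a rank-induction argument based on the Trombi--Varadarajan expansion, applied to $(1-\psi)k_{B_2}$ on $\Strip_2$) is missing from your proposal and is a substantial part of the proof.

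Two further points of substance. First, your explanation of where $\kappa<1$ enters in \rmi\ is not correct: it has nothing to do with absorbing the Plancherel growth (that is handled by the $J=[\![n/2]\!]+1$ derivatives and the Gaussian factor); it comes from the fact that the kernel bound of Lemma~\ref{l: pointwiseestimates}~\rmi, namely $\e^{-2\rho(H)}\bigl[1+\cN(H)\bigr]^{2\kappa-\ell-1}$, matches the model kernel $\tau_2^{2-2\kappa}$ in (\ref{f: estimatetau}), and Proposition~\ref{p: debole per tau}~\rmii\ shows $T_2^\vep$ is of weak type $1$ only for $\vep>0$ when $\ell>1$. Second, the mechanism that rescues the endpoint $\kappa=1$ in \rmii\ is not the derivative-trading of Proposition~\ref{p: McircQ}: it is that $M\circ Q$ lies in the \emph{isotropic} class $H(\TB;J,1)$ (Proposition~\ref{p: McircQ0}), so one may integrate by parts in the radial direction $\eta=\mod{\rho}H/\mod{H}$ (Lemma~\ref{l: pointwiseestimates}~\rmii), producing the isotropic bound $\e^{-\rho(H)-\mod{\rho}\mod{H}}\bigl[1+\rho(H)\bigr]^{(1-\ell)/2}$, i.e.\ the kernel $\tau_1^0$, and $T_1^0$ \emph{is} of weak type $1$ at the borderline, unlike $T_2^0$. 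Without these ingredients the proposal does not yet yield either part of the theorem.
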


\begin{remark}
The proof of Theorem~\ref{t: main result}
will show that in the case where $\ell>1$ the nonisotropic
behaviour of the multiplier $m_B$ near the point $i\rho$
(see Remark~\ref{rem: nonisotropic cond} above)
implies a nonisotropic behaviour of
the kernel $k_B$ at infinity. In fact, the bounds of
$k_B$ we shall obtain are expressed, in Cartan co-ordinates, 
in terms of a nonisotropic homogeneous ``norm'' on $\fra$.
\end{remark}

\begin{remark}
Observe that Theorem~\ref{t: main result}~\rmii\ 
applies to $\cL^{-\al/2}$ when $0 \leq \Re \al \leq 2$
(hence we re-obtain Anker's result \cite{A2}),
and that it is sharp, in
the sense that for each $\kappa >1$
the function $z\mapsto z^{-k}$ is in $\fH(\bP;J,\kappa)$,
but $\cL^{-\kappa}$ is not of weak type $1$.
We also remark that if $M$ is in $\fH(\bP;J,\kappa)$ for 
some $\kappa$ in $[0,1)$, then, \emph{a fortiori},
$M$ is in $\fH(\bP;J,1)$, hence \rmii\ applies to~$M$.
\end{remark}

\begin{remark}
We do not know whether \rmi\ holds with $\kappa =1$. 
Moreover, if $M$ is in $\fH(\bP;J,1)$, then $m_{M(\cL)}$
is in $H'(\TW;J,1)$ by Proposition~\ref{p: McircQ}. Thus, 
for functions of the Laplace--Beltrami operator $\cL$
 condition \rmi\ is weaker than \rmii.
\end{remark}

\section{Weak type estimates for certain convolution operators}
\label{s: The higher rank case I}

Suppose that $\vep\in \BR$, and
consider the $K$--bi-invariant functions 
$\tau_1^\vep$ and $\tau_2^\vep$ on $G$, defined~by
\begin{equation}\label{f: estimatetau}
\begin{aligned}
\tau_1^\vep \bigl(\exp H \bigr)
& = \e^{-\rho(H)- \mod{\rho}\, \mod{H}} \,\big(1+\rho(H)\big)^{(1-\ell)/2
      -\vep} \\
\tau_2^\vep \bigl(\exp H \bigr)
&= \e^{-2\rho(H)} \,\big(1+\cN(H)\big)^{1-\ell-\vep}
\end{aligned}
\quant H \in \fra^+.
\end{equation}
The homogeneous norm $\cN$ is defined in (\ref{f: anisotropic norm}).
Note that 
$\tau_1^\vep \notin \lu{G}$ when $\vep\leq (\ell+1)/2$, and
$\tau_2^\vep \notin \lu{G}$ when $\vep\leq 2$.
We denote by $T_1^\vep$ and $T_2^\vep$
the convolution operators 
$f\mapsto f*\tau_1^\vep$ and $f\mapsto f*\tau_2^\vep$ respectively.
In this section we study the weak type $1$ boundedness
of the operators $T_1^\vep$ and $T_2^\vep$.
The weak type $1$ estimate for $T_1^0$
was essentially proved by Str\"omberg in \cite{Str} (see also
\cite[pag. 1331]{AL} and \cite[pag. 276]{A2}).

It is fair to say that 
the result stated in \cite[Remark~2, p. 125]{Str} applies to
both $\tau_1^\vep$ when $\vep \geq 0$ and to $\tau_2^\vep$ 
when $\vep>0$. 
This gives the weak type $1$ estimate for 
$T_1^\vep$ when $\vep \geq 0$ and for $T_2^\vep$ 
when $\vep>0$. 
However, the result in \cite[Remark~2, p. 125]{Str} 
is stated without proof.   For the
reader's convenience we prefer to give a self-contained proof
of the weak type $1$ estimate for the operator $T_2^\vep$.
Our strategy follows closely that of Str\"omberg,

For each complex number $b$ denote by $\e_b$ the character
$
s\mapsto \e^{bs}
$
on $\BR$.
Recall the co-ordinates $(H',H_\ell)$ on $\fra$
introduced in Section~\ref{s: Notation}.
Denote by $\nu$ the measure on $\fra$ defined by
$\wrt\nu (H',H_\ell)=\e^{2|\rho|H_\ell}\wrt H_\ell\wrt H'$.
Note that $\nu$
is the product measure $\la_{\ell-1}\times \nu_1$, where
$\la_{\ell-1}$ denotes the Lebesgue measure on $\rho^\perp$ and
${\wrt\nu_1} (H_\ell) = \e^{2|\rho|H_\ell}\wrt H_\ell$.
Define the function $\si$ by
\begin{equation}\label{sigma}
\si(H',H_\ell)
= \e_{-2\mod{\rho}}(H_\ell)\, p(H')
\quant(H',H_\ell)\in\BR^{\ell-1}\times \BR\,.
\end{equation}
where $p$ is a function in $\lu{\la_{\ell-1}}$. 
Define the operators $S_1$ and $S$ by
$$
S_1 f
= f\ast_{\BR} \e_{-2|\rho|}
\quant f\in C^{\infty}_c(\BR)
\qquad \hbox{and}\qquad
S f
= f \ast_{\BR^{\ell}}\si
\quant f \in C^{\infty}_c(\BR^{\ell})
$$
where $\ast_{\BR}$ and $\ast_{\BR^\ell}$ denote the convolution on
$\BR$ and on $\BR^\ell$ respectively.
Observe that
\begin{equation} \label{f: composition}
\begin{aligned}
S f(H',H_\ell)
& = \int_{\BR}\e_{-2|\rho|}(H_\ell-L_\ell)\,
     \int_{\BR^{\ell-1}}p(H'-L')\,f(L',L_\ell) \wrt L' \wrt L_\ell\\
& = \int_{\BR}\e_{-2|\rho|}(H_\ell-L_\ell)\,
     \big[f\ast_{\BR^{\ell-1}} p(\cdot,L_\ell) \big](H') \wrt L_\ell\\
& = \big[S_1F(H',\cdot) \big](H_\ell)
     \quant (H',H_\ell)\in\BR^{\ell-1}\times \BR,
\end{aligned}
\end{equation}
where $F(H',\cdot)(H_\ell)
= \big[f\ast_{\BR^{\ell-1}} p(\cdot,H_\ell) \big](H')$.
Note that
\begin{equation} \label{f: partial estimate}
\norm{F(H',\cdot)}{\lu{\nu_1}}
\leq \int_{\BR^{\ell-1}} \norm{f(L',\cdot)}{\lu{\nu_1}}
\, \mod{p(H'-L')}\wrt L'.
\end{equation}

We shall use the following elementary lemma.

\begin{lemma}\label{l: weak 1dim}
The following hold:
\begin{enumerate}
\item[\itemno1]
the operator $S_1$ extends to a bounded operator from $\lu{\nu_1}$
to $\lorentz{1}{\infty}{\nu_1}$;
\item[\itemno2]
the operator $S$ extends to a bounded operator from
$\lu{\nu}$ to $\lorentz{1}{\infty}{\nu}$.
\end{enumerate}
\end{lemma}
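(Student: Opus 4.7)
My plan is to exploit the very special multiplicative structure of the convolution kernel $\e_{-2|\rho|}$, which turns $S_1$ into essentially a rank-one operator on compactly supported functions.

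For part \rmi, I would observe that for $f\in C_c^\infty(\BR)$,
$$
S_1f(t)=\int_{\BR} f(s)\,\e^{-2|\rho|(t-s)}\wrt s = c_f\,\e^{-2|\rho|t},
\qquad c_f:=\int_{\BR}f(s)\,\e^{2|\rho|s}\wrt s,
$$
and $|c_f|\leq \norm{f}{\lu{\nu_1}}$. Consequently the superlevel set
$\{t\in\BR:|S_1f(t)|>\alpha\}$ is a half-line $(-\infty,T_\alpha)$ with
$T_\alpha=(2|\rho|)^{-1}\log(|c_f|/\alpha)$, and
$$
\nu_1\bigl(\{t:|S_1f(t)|>\alpha\}\bigr)=\int_{-\infty}^{T_\alpha}\e^{2|\rho|t}\wrt t=\frac{|c_f|}{2|\rho|\,\alpha}\leq \frac{\norm{f}{\lu{\nu_1}}}{2|\rho|\,\alpha}.
$$
The desired weak type $1$ bound on $C_c^\infty(\BR)$ then extends by density.

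For part \rmii, I would exploit formula (\ref{f: composition}), which expresses $Sf(H',H_\ell)$ as $S_1$ applied in the $H_\ell$ variable to $F(H',\cdot)$. For each fixed $H'\in\BR^{\ell-1}$, part \rmi\ gives
$$
\nu_1\bigl(\{H_\ell\in\BR : |Sf(H',H_\ell)|>\alpha\}\bigr)\leq \frac{C}{\alpha}\,\norm{F(H',\cdot)}{\lu{\nu_1}}.
$$
Since $\wrt\nu=\wrt H'\otimes\wrt\nu_1$, Fubini and the above pointwise-in-$H'$ estimate yield
$$
\nu\bigl(\{|Sf|>\alpha\}\bigr)=\int_{\BR^{\ell-1}}\nu_1\bigl(\{H_\ell:|Sf(H',H_\ell)|>\alpha\}\bigr)\wrt H'\leq \frac{C}{\alpha}\int_{\BR^{\ell-1}}\norm{F(H',\cdot)}{\lu{\nu_1}}\wrt H'.
$$
The inequality (\ref{f: partial estimate}) together with a further Fubini then gives
$$
\int_{\BR^{\ell-1}}\norm{F(H',\cdot)}{\lu{\nu_1}}\wrt H'\leq \norm{p}{\lu{\la_{\ell-1}}}\,\norm{f}{\lu{\nu}},
$$
completing the bound $\nu(\{|Sf|>\alpha\})\leq C\,\norm{p}{\lu{\la_{\ell-1}}}\,\norm{f}{\lu{\nu}}/\alpha$ on test functions; density extends it to $\lu{\nu}$.

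There is no real obstacle here, beyond the observation that convolving $L^1(\nu_1)$ with the pure exponential character collapses onto a one-dimensional range; once this is noticed, everything is elementary. The only point to be slightly careful about is the extension from $C_c^\infty$ to $L^1(\nu_1)$, which is justified by the fact that $c_f$ in part \rmi\ depends continuously and linearly on $f\in \lu{\nu_1}$, so the same closed formula $S_1f(t)=c_f\,\e^{-2|\rho|t}$ defines the extension, and the distribution-function computation remains valid.
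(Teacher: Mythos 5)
Your proposal is correct and follows essentially the same route as the paper: the paper likewise observes that convolution with the character $\e_{-2|\rho|}$ collapses $S_1f$ onto the single exponential $\e_{-2|\rho|}$ (the paper reduces to nonnegative $f$, so its constant is exactly $\norm{f}{\lu{\nu_1}}$, whereas you bound $\mod{c_f}$ by it), computes the $\nu_1$-measure of the resulting half-line level sets, and then deduces part \rmii\ from part \rmi\ via the composition formula (\ref{f: composition}), the estimate (\ref{f: partial estimate}) and Fubini. Your explicit remark on extending from $C_c^\infty$ to $\lu{\nu_1}$ is a harmless addition, not a different argument.
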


\begin{proof}
First we prove \rmi.
It suffices to consider nonnegative functions $f$. Since $\e_{-2|\rho|}$ is
a character of the group $\BR$,
$$
S_1f
= \e_{-2|\rho|}\, \big[(\e_{2|\rho|}f) \ast_{\BR} {\bf{1}} \big],
$$
where ${\bf{1}}$ denotes the constant function equal $1$ on $\BR$. Observe
that
$$
(\e_{2|\rho|}f)\ast_{\BR}{\bf{1}}(s)
= \norm{\e_{2|\rho|}f}{\lu{\la_1}}
= \norm{f}{\lu{\nu_1}}.
$$
Thus,
$
S_1f
=\e_{-2|\rho|}\,\|f\|_{\lu{\nu_1}}\,.
$
Now, for every $t>0$ the level set $\{s\in\BR:~S_1f(s)>t\}$ is just the interval
$
\bigl(-\infty,\log\bigl(\norm{f}{\lu{\nu_1}}/t \bigr)^{1/2|\rho|} \bigr).
$
Hence
\begin{align*}
\nu_1\bigl(\{s\in\BR: S_1f(s) > t \} \bigr)
& = \int_{-\infty}^{\log(\norm{f}{\lu{\nu_1}}/t)^{1/(2\mod{\rho})}}
     \wrt \nu_1\\
& = \frac{1}{2|\rho|}\,\frac{\|f\|_{\lu{\nu_1}}}{t}
    \quant t \in \BR^+,
\end{align*}
as required.

Now we prove \rmii. 
Suppose that $f$ is in $\lu{\nu}$.
By Fubini's theorem, (\ref{f: composition}) and (\ref{f: partial estimate})
\begin{align*}
\nu\big( \{ H \in \fra: \mod{S f (H)} > t \} \big)
& = \int_{\rho^\perp} \nu_1\big( \{ (H_\ell \in\BR:
      \mod{S f (H',H_\ell)} > t \} \big) \wrt H'\\
& = \int_{\rho^\perp} \nu_1\big( \{ H_\ell \in\BR:
      \bigmod{[S_1F(H',\cdot) \big](H_\ell)} >t \} \big) \wrt H'\\
& \leq \frac{1}{2\mod{\rho}t}\,
      \int_{\rho^\perp}\|F(H',\cdot) \|_{\lu{\nu_1}} \wrt H'\\
& \leq \frac{1}{2\mod{\rho}t}\,\|p\|_{\lu{\la_{\ell-1}}}\,
      \|f\|_{\lu{\nu}} \quant t \in \BR^+,
\end{align*}
as required.
\end{proof}

For each $c$ in $\BR^+$ define the cone $\Cone{c}$ by 
\begin{equation} \label{f: Cone} 
\Cone{c}
= \{ H\in \fra: \mod{H'} < c\, H_\ell \}.
\end{equation}
Since $H_{\rho}$ is in $\fra^+$,
there exists $c_0$ such that $\Cone{c_0} \subset \fra^+$.
It is well known (see \cite[Lemma~34]{HC}
or \cite[Ch. VII, Lemma~2.20~\rmiv]{H2})
that the dual Weyl chamber ${}^+\fra$ contains $\fra^+$.
Then the dual cone $\Cone{1/c_0}$ contains ${}^+\fra$. Choose
$c_1 > 1/c_0$: note that
\begin{equation} \label{f: inclusion between cones}
\Cone{c_0} \subset \fra^+ \subset {}^+\fra \subset \Cone{1/c_0}
\subset \Cone{c_1}.
\end{equation}

\begin{proposition} \label{p: debole per tau}
Suppose that $\vep$ is in $\BR$. The following hold:
\begin{enumerate}
\item[\itemno1]
the operator $T_1^\vep$ is of weak type $1$ if and only if $\vep \geq 0$;
\item[\itemno2]
if $\ell>1$, then the operator $T_2^\vep$ is of weak type $1$ 
if and only if $\vep > 0$.  If $\ell = 1$, then 
$T_2^\vep$ is of weak type $1$ 
if and only if $\vep \geq 0$.
\end{enumerate}
\end{proposition}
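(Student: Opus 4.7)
The plan is to follow Str\"omberg's strategy~\cite{Str}: transfer the weak type $1$ question on $G$ to a weak type $1$ question on the abelian factor $\fra$ equipped with the exponentially weighted measure $\nu$, where Lemma~\ref{l: weak 1dim} applies. The inclusion $\fra^+\subset\Cone{c_1}$ provided by (\ref{f: inclusion between cones}) lets us use the anisotropic coordinates $(H',H_\ell)$ on $\fra^+$, in which $\rho(H)=|\rho|H_\ell$, $|H|=H_\ell\sqrt{1+|H'|^2/H_\ell^2}$, and $\delta(H)\le C\,\e^{2\rho(H)}=C\,\e^{2|\rho|H_\ell}$. A symmetrization of the $G$--convolution through the Cartan decomposition then yields a pointwise majorization of the form
\begin{equation*}
(f\ast\tau_j^\vep)(\exp H)\le C\,\bigl(F\ast_{\fra}\sigma_j\bigr)(H),\qquad H\in\fra^+,
\end{equation*}
where $F(H)=\chi_{\fra^+}(H)\,\e^{2\rho(H)}\int_K f(k\exp H)\wrt k$ is the Jacobian-corrected lift of $f$ and $\sigma_j$ is either a kernel of the form $\e_{-2|\rho|}(H_\ell)\,p_j(H')$ or a dyadic sum of such kernels. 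Lemma~\ref{l: weak 1dim}~\rmii\ then converts this into a weak type $(1,1)$ bound on $(\fra,\nu)$, which transfers back to $(G,\wrt x)$ precisely because $\nu$ is the majorant of Haar measure produced by $\delta(H)\le C\,\e^{2\rho(H)}$.

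The range of admissible $\vep$ is then decided by the $\lu{\la_{\ell-1}}$-integrability of the resulting profiles $p_j$. For $\tau_2^\vep$ the weight $\e^{-2\rho(H)}=\e^{-2|\rho|H_\ell}$ separates exactly, so one can take $p_2(H')=(1+|H'|)^{1-\ell-\vep}$; then $\int_{\rho^\perp}p_2=C\int_0^\infty(1+r)^{1-\ell-\vep}r^{\ell-2}\wrt r$ converges iff $\vep>0$ when $\ell\ge 2$, and trivially for all $\vep\ge 0$ when $\ell=1$---the rank-one endpoint being captured by the one-dimensional version of Lemma~\ref{l: weak 1dim}~\rmi. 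For $\tau_1^\vep$ the elementary inequality $\e^{-\rho(H)-|\rho||H|}\le C\,\e^{-2|\rho|H_\ell}\,\e^{-|\rho||H'|^2/(2H_\ell)}$ on $\Cone{c_1}$ produces an additional Gaussian whose width depends on $H_\ell$; a dyadic decomposition $H_\ell\sim 2^k$ reduces the problem to weak type $(1,1)$ bounds on each piece, and Str\"omberg's handling (see also~\cite{AL,A2}) identifies the threshold of convergence as $\vep\ge 0$.

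For the necessary directions I would apply $T_j^\vep$ to an approximate identity supported in a small $K$--biinvariant neighbourhood of the origin, so that $T_j^\vep f\asymp\tau_j^\vep$ outside a bounded set; the weak type $(1,1)$ bound then forces $t\,\mu_G(\{\tau_j^\vep>t\})\le C$ as $t\to 0$, and a direct asymptotic analysis using $\delta(H)\asymp\e^{2\rho(H)}$ in the interior of $\fra^+$, together with the coordinate formulas above, violates this bound as soon as $\vep$ sits below the threshold claimed in \rmi\ or \rmii. The main technical obstacle will be making the pointwise majorization $(f\ast\tau_j^\vep)(\exp H)\le C\,(F\ast_{\fra}\sigma_j)(H)$ rigorous---in particular handling the Weyl symmetrization and the portions of $\fra^+$ lying outside $\Cone{c_0}$, where the coordinate estimates must be replaced by their Weyl-translates---together with the sharp summation of the dyadic pieces for $\tau_1^\vep$ at the endpoint $\vep=0$, which is precisely the delicate step of Str\"omberg's argument we must adapt to the higher-rank, anisotropic setting.
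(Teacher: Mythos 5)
Your overall philosophy (reduce to the weighted abelian convolution handled by Lemma~\ref{l: weak 1dim}, and prove necessity by showing the kernels fail to lie in $\lorentz{1}{\infty}{X}$) is the right one, but the step that carries all the analytic weight is missing, and the mechanism you propose in its place does not work as stated. You assert a pointwise majorization $(f\ast\tau_j^\vep)(\exp H)\le C\,(F\ast_{\fra}\sigma_j)(H)$ obtained ``by symmetrization through the Cartan decomposition''. First, bounding $f\ast\tau_j^\vep$ only at points $\exp H$ is not enough: $f\ast\tau_j^\vep$ is right--$K$--invariant but not left--$K$--invariant, so to estimate its distribution function you need a bound at $k\exp H$ uniformly in $k\in K$; unwinding this, your majorization is equivalent to a kernel estimate of the type $\sup_{u\in K}\tau_j^\vep\bigl(\exp(-L)\,u\,\exp H\bigr)\,\de(L)\le C\,\e^{2\rho(L)}\sigma_j(H-L)$, which, with the sharp anisotropic polynomial factor $p_2(H')=(1+\mod{H'})^{1-\ell-\vep}$ on the right, is precisely the hard content of the proposition and is nowhere proved in your plan (you yourself flag it as ``the main technical obstacle''). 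Second, your lift $F(H)=\chi_{\fra^+}(H)\,\e^{2\rho(H)}\int_K f(k\exp H)\wrt k$ uses the weight $\e^{2\rho}$ rather than the true Jacobian $\de(H)$, so $\norm{F}{\lu{\fra}}$ is \emph{not} controlled by $\norm{f}{\lu{G}}$ when $f$ has mass near the walls or the origin (where $\de\ll\e^{2\rho}$), and the transference constant blows up; this normalization mismatch is exactly why the Cartan picture is avoided. The paper's proof instead works in Iwasawa coordinates: using $[\OV{n}ak]^+=\log a+H(\OV{n})+H'(\OV{n},a)$ with both Iwasawa terms in ${}^+\fra$, it proves by a three-case analysis ($x\in\Cone{c_1}$, $x\in-\Cone{c_1}$, and the complement) the pointwise bound $\tau_2^\vep(\OV{n}ak)\le P(\OV{n})^{3/2}\,\si_2^\vep(a)$ with $P^{3/2}\in\lu{\OV{N}}$, and then invokes Str\"omberg's transference (his ``Step four'') together with Lemma~\ref{l: weak 1dim}; for $T_1^\vep$ it simply observes that Str\"omberg's own proof applies verbatim to $\tau_1^0$ and uses $\tau_1^\vep\le\tau_1^0$, with no dyadic decomposition needed.

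The necessity directions are also only sketched in your proposal. Reducing to ``$\tau_j^\vep\notin\lorentz{1}{\infty}{X}$'' is indeed the standard first move, but the actual verification is not a routine asymptotic check: for $\tau_1^\vep$ with $\vep<0$ one must work inside the parabolic region $\Parabolic$ and solve for the level-set threshold $s_t$, and for $\tau_2^0$ with $\ell\ge2$ the paper has to introduce the comparison kernel $\tau'$ supported in $\Parabolic^c\cap\Cone{c_0}$, estimate the measure of annuli $\{s^{1/2}<\mod{H'}<b_t(s)\}$, and show that the gap $v_t-u_t$ between the two solutions of $s=b_t(s)$ and $s^{1/2}=b_t(s)$ is unbounded as $t\to0^+$. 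None of this is supplied by the phrase ``a direct asymptotic analysis violates this bound''; in particular the borderline failure at $\vep=0$ for $\ell\ge2$ (versus its validity at $\vep=0$ for $\ell=1$, which the paper gets by the two-sided comparison $\tau_2^\vep\asymp\tau_1^{\vep/2}$ in rank one) is exactly the delicate point and needs the quantitative construction.
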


\begin{proof}
First we prove \rmi.  Str\"omberg \cite{Str}
proved the weak type $1$ boundedness of the convolution operator
$f\mapsto f*\tau$, where $\tau$ is the $K$--bi-invariant
function defined by 
$$
\tau\bigl(\exp H \bigr)
= \e^{-2\mod{\rho}\, \mod{H}} \,\big(1+\mod{H}\big)^{(1-\ell)/2}
\quant H \in \fra^+.
$$
It is straightforward to check that his argument applies
almost \emph{verbatim} to the operator $T_1^0$.
Since $\tau_1^\vep \leq \tau_1^0$ for all $\vep>0$,
the weak type $1$ estimate for the operators $T_1^\vep$ is
an immediate consequence of that of $T_1^0$.

To conclude the proof of \rmi\ it remains to show that
$T_1^\vep$ is not of weak type $1$ when $\vep<0$.
By a standard argument, it suffices to prove that
the corresponding kernel $\tau_1^\vep$ is
not in $\lorentz{1}{\infty}{X}$.  We give the details in the case where
$\ell \geq 2$.  Those in the case where $\ell=1$ are easier, and are omitted.
Observe that
$$
\tau_1^\vep (\exp H)
= \e^{-2\rho(H)} \, U(H)
\quant H \in \fra^+,
$$
where
$
U(H) = \e^{\rho(H) - \mod{\rho}\, \mod{H}}\, \bigl(1+\rho(H)
\bigr)^{(1-\ell)/2-\vep}.
$
Write $H = (H',H_\ell)$, and recall that $\rho(H) = \mod{\rho} H_\ell$. 
A straightforward computation shows that
$$
\begin{aligned}
\rho(H) - \mod{\rho}\, \mod{H}
& = - \mod{\rho} \, \smallfrac{\mod{H'}^2}{H_\ell+\sqrt{H_\ell^2+\mod{H'}^2}} \\
& \geq - \mod{\rho} \, \smallfrac{\mod{H'}^2}{H_\ell}.
\end{aligned}
$$
Now, if $H$ is in 
$\Parabolic$ (see (\ref{f: Parabolic})), then $\mod{H'}^2/H_\ell \leq 1$,
so that there exists a positive constant $c$ such that
$$
U(H)
\geq c\, \bigl(1+H_\ell \bigr)^{(1-\ell)/2-\vep}
\quant H \in \Parabolic.
$$
For each $t$ in $\BR^+$, define
$
E_t = \{ k_1 \exp(H) k_2\in K \exp( \fra^+) K:
\tau_1^\vep(\exp H) >t\}.
$
Set
$
h := \inf\{ H_\ell \in \BR^+: (H',H_\ell) \in \fra^+\cap \Parabolic \},
$
and denote by $s_t$ the unique point in $\BR^+$ such that
\begin{equation} \label{f: unique point}
\smallfrac{\e^{2\mod{\rho}s_t}}{(1+s_t)^{(1-\ell)/2-\vep}}
= (ct)^{-1}.
\end{equation}
Denote by $\mod{E_t}$ the Haar measure of $E_t$.  Note that
$$
\begin{aligned}
\mod{E_t}
& \geq \Bigmod{\Bigl\{k_1 \exp(H',H_\ell)\, k_2 
     \in K\exp(\fra^+ \cap \Parabolic)\,K:
     \smallfrac{\e^{-2\mod{\rho}\, H_\ell}}{c\,
     \bigl(1+H_\ell \bigr)^{(\ell-1)/2+\vep}} > t \Bigr\}} \\
& \geq \bigmod{\bigl\{k_1 \exp(H',H_\ell)\, k_2 
     \in K\exp(\fra^+ \cap \Parabolic)\,K:
     h < H_\ell < s_t \bigr\}}.
\end{aligned}
$$
It is straightforward to check that
this measure is estimated from below by a constant times
$
\int_h^{s_t} s^{(\ell-1)/2} \, \e^{2\mod{\rho}s} \wrt s.
$
By (\ref{f: unique point})
$s_t$ tends to $\infty$ as $t$ tends to $0^+$.
Integration by parts shows that the integral above is
comparable to $s_t^{(\ell-1)/2} \, \e^{2\mod{\rho}s_t}$ as $t$
tends to $0^+$. Thus, there exists a positive constant~$C$ such that
$$
\mod{E_t}
\geq C \,s_t^{(\ell-1)/2} \, \e^{2\mod{\rho}s_t} 
\geq C \, \smallfrac{s_t^{-\vep}}{t}.
$$
Hence $\sup_{t>0} \bigl( t\mod{E_t}\bigr) = \infty$,
so that $\tau_1^\vep\notin\lorentz{1}{\infty}{X}$ if $\vep<0$.
The proof of~\rmi\ is complete.

Next we prove \rmii.   
Suppose first that $\ell = 1$.  Then 
$$
\begin{aligned}
\tau_1^\vep (\exp H) 
& = \e^{-2\rho(H)} \, (1+ \mod{\rho} \, H)^{-\vep}
       \\
\tau_2^\vep (\exp H) 
& = \e^{-2\rho(H)} \, (1+ \sqrt{H})^{-\vep}
\end{aligned}
\quant H \in \fra^+. 
$$
It is straightforward to check that there exist positive constants
$C_1$ and $C_2$ such that 
$$
C_1 \, \tau_1^{\vep/2} 
\leq \tau_2^\vep
\leq C_2 \, \tau_1^{\vep/2}. 
$$
By \rmi\ $T_1^{\vep/2}$ is of weak type $1$ if and only if $\vep \geq 0$. 
Hence so is $T_2^\vep$, as required.

Now suppose that $\ell \geq 2$ and that $\vep > 0$.
We express $\tau_2^\vep$ in Iwasawa co-ordinates.
Denote by $P: \OV{N} \to \BR$ the function defined by
$
P(\OV{n})
= \e^{-\rho(H(\OV{n}))}.
$
Recall that
\begin{equation} \label{f: Cartan component}
[\OV{n}ak]^+
= \log a+H(\OV{n})+H'\bigl(\OV{n}, a \bigr)
\quant \OV{n} \in \OV{N} \quant a\in A \quant k \in K,
\end{equation}
where $H(\OV{n})$ and $H'\big(\OV{n},a \big)$
are in ${}^+\fra$
(see, for instance, \cite[p.~119]{Str}),
and $[\OV{n}ak]^+$ denotes the~$\OV{\fra^+}$ component of $\OV{n}ak$ in
the Cartan decomposition $K \exp{\OV{\fra^+}} K$.
For the rest of the proof we write $x$ instead of $\log a$,
and $y$ instead of $H(\OV{n})+H'(\OV{n},a)$.
Then
\begin{equation}\label{f: estimatetau1}
\tau_2^\vep(\OV{n}ak)
= \e^{-2\rho(x+y)} \,
      \bigl[1+\cN\bigl(x+y\bigr)\bigr]^{1-\ell-\vep}.
\end{equation}
Since $H'\big(\OV{n},a \big)$ is in ${}^+\fra$,
$
\e^{-\rho(y)}
\leq \e^{-\rho(H(\OV{n}))} = P(\OV{n}),
$
so that
\begin{equation}\label{f: e-2rz}
\tau_2^{\vep}(\OV{n}ak)
\leq \e^{-2\rho(x)} \, P(\OV{n})^{3/2}
         \, \e^{-\rho(y)/2} \,
      \bigl[1+\cN\bigl(x+y\bigr)\bigr]^{1-\ell-\vep}.
\end{equation}
We claim that there exists a positive constant $C$ such that
\begin{equation}\label{f: pointwise estimate}
\frac{\e^{-\rho(y)/2}}{\bigl[1+\cN\bigl(x+y\bigr)\bigr]^{\ell+\vep-1}}
\leq
\begin{cases}
C\, \bigl[ 1+\cN(x) \bigr]^{1-\ell-\vep}
& \quant x\in \Cone{c_1} \\
\e^{-\mod{\rho}\, \mod{x_\ell}/2}
& \quant x\in (-\Cone{c_1}) \\
\e^{-\de \mod{x'}-\de c_1 \mod{x_\ell}}
& \quant x\notin \bigl(\Cone{c_1} \cup (-\Cone{c_1})\bigr),
\end{cases}
\end{equation}
where $\de = (c_0-1/c_1)\, \mod{\rho}/8$
(see (\ref{f: Cone}) and (\ref{f: inclusion between cones})
for the definitions of $c_0$, $c_1$ and $\Cone{c_1}$).

Observe that, on the one hand, $x+y$ belongs to $\OV{\fra^+}$, because
$x+y = [\OV{n}ak]^+$, hence to $\Cone{c_1}$.
On the other hand $y$ is in ${}^+\fra\subset \Cone{c_1}$, so that
that $x+y$ is in $x+\Cone{c_1}$. Thus,
$$
x+y \in \Cone{c_1} \cap \bigl(x+\Cone{c_1}\bigr).
$$
To prove the claim, first assume that $x$ is in $\Cone{c_1}$.
Observe that if $\cN(y)\leq \cN(x)/2$, then
$$
\cN(x)\leq \cN(x+y)+\cN(-y)\leq \cN(x+y)+\cN(x)/2.
$$
Hence $\cN(x)\leq 2\, \cN(x+y)$ and
$\cN(x+y)^{1-\ell-\vep } \leq C \,\cN(x)^{1-\ell-\vep}$, so that
$$
\frac{\e^{-\rho(y)/2}}{\bigl[1+\cN\bigl(x+y\bigr)\bigr]^{\ell+\vep-1}}
\leq C \, \bigl[1+\cN\bigl(x\bigr)\bigr]^{1-\ell-\vep},
$$
where we have used the fact that $\rho(y) \geq 0$.

If, instead, $\cN(y)> \cN(x)/2$,
we observe that
$
\cN(x+y) \geq (x_\ell+y_\ell)^{1/2}
$
by definition of the homogeneous norm $\cN$, and that
$
\cN(y) \leq (1+c_1^4)^{1/4} \, \mod{y_\ell},
$
because $y$ is in $\Cone{c_1}$, and conclude that
$$
\frac{\cN(x)}{\cN(x+y)}
\leq \frac{2\, \cN(y)}{\sqrt{x_\ell+y_\ell}}
\leq 2 \, (1+c_1)^{1/4}\, \sqrt{y_\ell}.
$$
In the last inequality we have also used the fact that $x_\ell>0$,
because $x$ is in the cone $\Cone{c_1}$.
Then $\cN(x+y)^{1-\ell-\vep}\leq C\, y_\ell^{(\ell+\vep-1)/2} \,
\cN(x)^{1-\ell-\vep}$. Hence
$$
\begin{aligned}
\frac{\e^{-\rho(y)/2}}{\bigl[1+\cN\bigl(x+y\bigr)\bigr]^{\ell+\vep-1}}
& \leq C \, [1+y_\ell^{(\ell+\vep-1)/2}] \,\e^{-\rho(y)/2}\,
     \cN(x)^{1-\ell-\vep} \\
& \leq C \, \cN(x)^{1-\ell-\vep},
\end{aligned}
$$
as required.

Next suppose that $x$ is in $-\Cone{c_1}$. Since $x+y$ is the
$\OV{\fra^+}$ component of $\OV n a k$
in the Cartan decomposition $K\exp(\OV{\fra^+})K$,
$x+y$ is in $\Cone{c_1}$, hence $x_\ell+y_\ell\geq 0$. Therefore
$y_\ell\geq -x_\ell$. Now $-x_\ell=\mod{x_\ell}$, because
$x$ is in $-\Cone{c_1}$. Hence
$$
\frac{\e^{-\rho(y)/2}}{\bigl[1+\cN\bigl(x+y\bigr)\bigr]^{\ell+\vep-1}}
\leq \e^{\mod{\rho}x_\ell/2}\,
= \e^{-\mod{\rho}\, \mod{x_\ell}/2},
$$
as required.

Finally, suppose that $x$ is in
$\fra\setminus \bigl(\Cone{c_1}\cup (-\Cone{c_1}) \bigr)$.
Since $x+y$ is in $\Cone{1/c_0}$, $y_\ell+x_\ell>c_0\, \mod{x'+y'}$. Hence
$$
y_\ell
 > -x_\ell+ c_0 \, \mod{x'+y'}
\geq -x_\ell+ c_0 \, \bigl(\mod{x'}-\mod{y'}\bigr).
$$
Recall that $y$ is in $\Cone{1/c_0}$, whence $-c_0\,\mod{y'} > - y_\ell$,
and that $x$ is in $\fra\setminus \bigl(\Cone{c_1}\cup (-\Cone{c_1}) \bigr)$,
so that $-\mod{x_\ell} > -\mod{x'}/c_1$. Therefore
$$
y_\ell
\geq \bigl( c_0-1/c_1\bigr)\, \mod{x'} -y_\ell
\geq \frac{c_0-1/c_1}{2}\, \mod{x'}
       + \frac{c_0\, c_1-1}{2}\, \mod{x_\ell} -y_\ell,
$$
i.e., $y_\ell > \bigl(c_0-1/c_1\bigr)\, \mod{x'}/4 +
\bigl(c_0\, c_1-1\bigr)\, \mod{x_\ell}/4$. Hence
$$
\frac{\e^{-\rho(y)/2}}{\bigl[1+\cN\bigl(x+y\bigr)\bigr]^{\ell+\vep-1}}
\leq \e^{-(c_0-1/c_1)\,\mod{\rho}\, \mod{x'}/8}
\, \e^{-(c_0\, c_1-1)\,\mod{\rho}\, \mod{x_\ell}/8},
$$
as required to conclude the proof of the claim.

Denote by $\si_2^\vep$ the function defined by
\begin{equation}\label{f: pointwise estimate II}
\si_2^{\vep}(\exp x)
=
\begin{cases}
C\, \e^{-2\rho(x)} \, \bigl( 1+\cN(x) \bigr)^{1-\ell-\vep}
& \quant x\in \Cone{c_1} \\
\e^{-2\rho(x)-\mod{\rho(x)}/2}
& \quant x\in (-\Cone{c_1}) \\
\e^{-2\rho(x)-\de \mod{x - \rho(x)\rho/\mod{\rho}}
-\de c_1 \mod{\rho(x)}/\mod{\rho}}
& \quant x\notin \bigl(\Cone{c_1} \cup (-\Cone{c_1})\bigr).
\end{cases}
\end{equation}
It is straightforward to check that
$\si_2^\vep$ is in $\lu{\fra\setminus \Cone{c_1}, \nu}$.
Hence the corresponding convolution operator 
is of weak type $1$.

Note that
$
\cN(x) \geq \mod{x'}.
$
Fr{}om (\ref{f: pointwise estimate II}) we deduce that
$$
\bigl(\si_2^\vep\One_{\Cone{c_1}}\bigr)(\exp x) 
\leq C \, \frac{\e^{-2\mod{\rho}\, x_\ell}}{(1+\mod{x'})^{\ell +\vep-1}}
\quant x \in \Cone{c_1}.
$$
Since $x'\mapsto (1+\mod{x'})^{\ell +\vep-1}$
is in $\lu{\rho^\perp,\la_{\ell-1}}$ for all $\vep > 0$, we may apply
Lemma~\ref{l: weak 1dim} and conclude that the operator
$f\mapsto f*(\si_2^\vep\circ \log )$ is bounded from $\lu{\fra,\nu}$
into $\lorentz{1}{\infty}{\fra,\nu}$.

Now, (\ref{f: e-2rz}) and (\ref{f: pointwise estimate}) imply that
\begin{equation}\label{f: pointwise estimate III}
\tau_2^{\vep}(\OV{n}ak)
\leq P(\OV{n})^{3/2} \, \si_2^\vep(a).
\end{equation}
It is well known (see, for instance, \cite{Str})
that $P^{3/2}$ is in $\lu{\OV{N}}$.
This, estimate (\ref{f: pointwise estimate III}) and the fact that
$f\mapsto f*(\si_2^\vep\circ \log )$ is bounded from $\lu{\BR^\ell,\nu}$
into $\lorentz{1}{\infty}{\BR^\ell,\nu}$ imply
(see \cite[Step four, p.~118--120]{Str})
that the map $f \mapsto f*\tau_2^\vep$ is of weak type~$1$, as required.

To conclude the proof of \rmii, it remains
to show that $T_2^0$ is not of weak type $1$.
It suffices to prove that $\tau_2^0$ is not in $\lorentz{1}{\infty}{X}$.
Denote by $\tau'$ the $K$--bi-invariant function
on $G$ defined~by
$$
\tau'\bigl(k_1\exp( H',H_\ell)k_2 \bigr)
= (1+\mod{H'})^{1-\ell} \, \e^{-2\mod{\rho}\, H_\ell}
\, \One_{\Parabolic^c\cap \Cone{c_0}} (H',H_\ell)
\quant H \in \fra^+ \quant k_1,k_2 \in K.
$$
Note that
$
\tau_2^0\bigl(\exp H \bigr)
\geq C \, \tau'(\exp H)
$
for all  $H$ in $\fra^+$.
Indeed, $H_\ell\leq \mod{H'}^2$, because $H$ is in $\Parabolic^c$. 
Hence $1+\cN(H) \leq 1+ 2^{1/4}\, \mod{H'}$, fr{}om which the
inequality above follows directly.

We show that $\tau'$ is not in $\lorentz{1}{\infty}{X}$.  
Clearly this implies
that $\tau_2^0$ is not in $\lorentz{1}{\infty}{X}$ either, as required.
For each $t$ in 
$(0,\e^{-2\mod{\rho}} \, 2^{1-\ell}]$ define $\Om_t =
\bigl\{k_1\exp( H)\, k_2 \in K\exp(\fra^+)\, K : \tau' (k_1\exp( H) k_2) 
> t\bigr\},
$
and the function $b_t: \BR\to \BR$ by 
$$
b_t(s)
= (t\, \e^{2\mod{\rho}s})^{-1/(\ell-1)}-1 \quant s\in \BR.
$$
Denote by $u_t$ and $v_t$ the unique solutions to the equations
$s = b_t(s)$ and $s^{1/2} = b_t(s)$.
It is straightforward to check that $1 < u_t <v_t$ for all
$t$ in $(0,\e^{-2\mod{\rho}} \, 2^{1-\ell}]$ and that
$s^{1/2} < b_t(s) <s$ for all $s$ in $(u_t,v_t)$.
Note also that $\tau'(\exp H) > t$ if and only if $H$ is in
$\Parabolic^c\cap \Cone{c_0}$ and $\mod{H'} < b_t(H_\ell)$.
Therefore 
$$
\Om_t 
\supset \bigl\{ k_1\exp(H',H_\ell)\, k_2\in K\exp( \fra^+)\, K:  
u_t < H_\ell < v_t, \, H_\ell^{1/2} < \mod{H'} < b_t(H_\ell) \bigr\},
$$
and
$$
\mod{\Om_t}
\geq \int_{u_t}^{v_t} \e^{2\mod{\rho}s} \, \la_{\ell-1}(A_s){\wrt s},
$$
where $A_s$ denotes the annulus
$\{H'\in \rho^\perp: s^{1/2} < \mod{H'}
< b_t(s) \}$.
Therefore
$$
\la_{\ell-1} (A_s) 
= c\,b_t(s) - c \, s^{(\ell-1)/2},
$$
where $c$ is the volume of the unit ball in $\BR^{\ell-1}$
with respect to the Lebesgue measure.
Observe that $u_t$ tends to $\infty$ as $t$ tends to $0^+$. Hence
$s$ is large in the formula above.
Now, there exists a positive constant $C$ such that
if $s$ is large, then
$$
\la_{\ell-1} (A_s) 
\geq \frac{C}{t} \, \e^{-2\mod{\rho}s},
$$
so that
$$
\mod{\Om_t}
\geq C \, \frac{v_t - u_t}{t}
\quant t \in (0,\e^{-2\mod{\rho}} \, 2^{1-\ell}].
$$
To conclude the proof, it suffices to show that $v_t - u_t$
does not stay bounded as $t$ tends to~$0^+$. Fr{}om
the definition of $u_t$ and $v_t$ we deduce that
$$
\e^{2\mod{\rho}(v_t - u_t)}
= \Bigl(\frac{1+v_t}{1+u_t^{1/2}}\Bigr)^{\ell-1}.
$$
Now, if $v_t-u_t$ stays bounded, then so does the
right hand side in the formula above. Hence there exists
a constant $C$ such that $1+v_t \leq C \, (1+u_t^{1/2})$, but
this is impossible, because $v_t > u_t$ and $u_t$ tends to $\infty$
as $t$ tends to $0^+$.

This proves that $T_2^0$ is not of weak type $1$, as required
to conclude the proof of \rmii\ and of the proposition.
\end{proof}

\section{Kernel estimates} \label{s: Kernel estimates}

In this section we prove some technical lemmata,
which will be used in the proof of Theorem~\ref{t: main result}.
The ball $\Ball{\phantom{}}$ is defined just below formula
(\ref{f: Ball}).

\begin{lemma}\label{l: powers of Q}
Suppose that $\ga$ is in $\BR^+$.  Then there exists
a constant $C$ such that for every $\eta$ in
$(\fra^*)^+$ with $\mod{\eta}=\mod{\rho}$ and for every
$\vep$ in $(0,1/4)$
\begin{equation}\label{f: integralQ}
\int_{\Ball{\phantom{}}}\bigmod{Q\bigl(\la+i(1-\vep)\eta\bigr)}^{-\ga}\wrt\la
\leq
\begin{cases}
C \,\bigl(1+\vep^{(\ell+1)/2-\ga}\bigr) & \hbox{if $\ga \neq (\ell+1)/2$} \\
C\, \log(1/\vep)                        & \hbox{if $\ga = (\ell+1)/2$}.
\end{cases}
\end{equation}
\end{lemma}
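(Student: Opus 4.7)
The plan is to compute $\bigmod{Q(\la+i(1-\vep)\eta)}^2$ explicitly in coordinates adapted to $\eta$, bound the integrand by a simple anisotropic size, and reduce the estimate to a one-dimensional scaling computation that pinpoints the critical exponent $(\ell+1)/2$.

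Since $\mod{\eta}=\mod{\rho}$, a direct calculation gives
$$
Q\bigl(\la+i(1-\vep)\eta\bigr) = \mod{\la}^2 + \mod{\rho}^2(2\vep-\vep^2) + 2i(1-\vep)\prodo{\la}{\eta}.
$$
I would then decompose $\la = s\hat\eta + \la_\perp$, where $\hat\eta = \eta/\mod{\eta}$ and $\la_\perp\perp\eta$, and set $r=\mod{\la_\perp}$. Noting that $\mod{\rho}^2(2\vep-\vep^2)\asymp\vep$ uniformly for $\vep\in(0,1/4)$, one obtains
$$
\bigmod{Q(\la+i(1-\vep)\eta)}^2 = \bigl(r^2+s^2+\mod{\rho}^2(2\vep-\vep^2)\bigr)^2 + 4(1-\vep)^2\mod{\rho}^2\, s^2,
$$
which is comparable, up to constants depending only on $\mod{\rho}$, to $(r^2+s^2+\vep+\mod{s})^2$. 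Using spherical coordinates on $\hat\eta^\perp\simeq\BR^{\ell-1}$ and $s\mapsto-s$ symmetry, the integral in \pref{f: integralQ} is bounded by a constant multiple of
$$
\int_0^{\mod{\rho}}\!\!\int_0^{\mod{\rho}} (r^2+s^2+\vep+s)^{-\ga}\,r^{\ell-2}\wrt s\wrt r.
$$

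The $s$-integral I would split at $s=1$: on the tail the integrand is dominated by $s^{-2\ga}$ and contributes a bounded term, while on $(0,1)$, $s+s^2\asymp s$, so $\int_0^1(r^2+\vep+s)^{-\ga}\wrt s$ is $\asymp(r^2+\vep)^{1-\ga}$ when $\ga>1$, $\asymp\log\bigl(1+(r^2+\vep)^{-1}\bigr)$ when $\ga=1$, and $\asymp 1$ when $\ga<1$. In the singular range $\ga>1$ the remaining $r$-integral becomes
$$
\int_0^{\mod{\rho}} r^{\ell-2}(r^2+\vep)^{1-\ga}\wrt r
= \vep^{(\ell+1)/2-\ga}\int_0^{\mod{\rho}\vep^{-1/2}} u^{\ell-2}(u^2+1)^{1-\ga}\wrt u
$$
after substituting $r=\vep^{1/2}u$. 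The $u$-integrand decays like $u^{\ell-1-2\ga}$ at infinity, hence is integrable over $\BR^+$ precisely when $\ga>(\ell+1)/2$. A three-way case analysis then yields: $O(1)$ when $\ga<(\ell+1)/2$ (the $u$-integral grows like $(\mod{\rho}\vep^{-1/2})^{\ell+1-2\ga}$ and the $\vep$-factors cancel exactly), $O\bigl(\vep^{(\ell+1)/2-\ga}\bigr)$ when $\ga>(\ell+1)/2$, and $O(\log(1/\vep))$ at the critical value $\ga=(\ell+1)/2$, where the $u$-integrand has $u^{-1}$ decay. The subcritical cases $\ga\leq 1$ reduce to elementary bounded-domain estimates.

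The main obstacle is the second step: correctly identifying the anisotropic size $r^2+s^2+\vep+\mod{s}$ of $\mod{Q}$, which reflects the fact that $Q$ vanishes to order $2$ in the $\la_\perp$-direction but only to order $1$ in the $\hat\eta$-direction as $\la+i(1-\vep)\eta\to i\rho$. Once this is pinned down, the scaling in $\vep$ and the emergence of the critical exponent $(\ell+1)/2$ as the borderline dimension where the $u$-integral just fails to converge are transparent.
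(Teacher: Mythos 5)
Your proposal is correct and follows essentially the same route as the paper's proof: the same explicit formula for $\mod{Q(\la+i(1-\vep)\eta)}^2$ in coordinates adapted to $\eta$, the same anisotropic comparison $\mod{Q}\asymp r^2+\vep+\mod{s}$ on the ball, and the same $\vep^{1/2}$-scaling transverse to $\eta$ producing the factor $\vep^{(\ell+1)/2-\ga}$ and the logarithm at the critical exponent — the paper simply performs one anisotropic change of variables $(\lae',\lae)=(\vep^{1/2}v',\vep v_\ell)$ where you integrate out $s$ first. Two cosmetic points: the $u$-integrand decays like $u^{\ell-2\ga}$, not $u^{\ell-1-2\ga}$ (your stated criterion $\ga>(\ell+1)/2$ is the one matching the correct exponent), and the polar factor $r^{\ell-2}$ presupposes $\ell\geq 2$, the case $\ell=1$ reducing to the one-dimensional $s$-integral alone.
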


\begin{proof}
Given $\eta$ in $(\fra^*)^+$ such that $\mod{\eta}=\mod{\rho}$,
we choose an orthonormal basis of $\fra^*$ whose last vector 
is $\eta/\mod{\eta}$.  For any $\la$ in $\fra^*$ we write
$\la=(\lae',\lae)$, where $\lae'\in \BR^{\ell-1}$ and $\lae \in \BR$
for the co-ordinates of $\la$ with respect to this orthonormal basis. 
Notice that
$$
\bigmod{ Q \bigl(\la + i (1- \vep) \eta \bigr) }^{2}
= \bigl[ \lae^2 + \mod{\lae'}^2 +(2\vep-\vep^2)\mod{\rho}^2 \bigr]^2
     + 4 \, (1-\vep)^2 \, \mod{\rho}^2\lae^2.
$$
Then there exists a constant $C$ such that
$$
\bigmod{ Q \bigl(\la + i (1- \vep) \eta \bigr) }^{2}
\geq C \, \bigl[( \mod{\lae'}^2 + \vep )^2 + \lae^2\bigr] 
\quant \la \in \Ball{\phantom{}}.
$$
Therefore
\begin{equation}  \label{f: integral} 
\int_{\Ball{\phantom{}}} \bigmod{ Q \bigl(\la + i (1- \vep) \eta \bigr) }^{-\ga}
\wrt\la 
\leq C\, \int_{\Ball{\phantom{}}}\frac{1}{\bigl[ 
( \mod{\lae'}^2 + \vep )^2 + \lae^2 \bigr]^{\ga/2} }\wrt\lae \wrt \lae'.
\end{equation}
If $\ell+1 > 2\ga$, then the integral on the right hand side
of (\ref{f: integral}) is estimated from above by 
$$
\int_{\Ball{\phantom{}}}  \bigl[ \mod{\lae'}^4 + \lae^2 \bigr]^{-\ga/2}
\wrt \lae \wrt \lae',
$$
which is finite, so that (\ref{f: integralQ})
is proved in this case.

Now suppose that $\ell+1 \leq 2\ga$.
We abuse the notation and denote by $\NBall{R}$ the set of
all $(\lae',\lae)$ in $\BR^{\ell-1}\times \BR$ such that 
$\mod{\lae'}^4 + \lae^2 < R^4$.  Observe that $\Ball{\phantom{}}\subset
\NBall{2\mod{\rho}}$.  Indeed, if $(\lae',\lae)$ is in~$\Ball{\phantom{}}$, then
$\mod{\lae'}^2+\lae^2 < \mod{\rho}^2$.  In particular 
$\mod{\lae'}< \mod{\rho}$ and $\mod{\lae}< \mod{\rho}$,
whence 
$$
\mod{\lae'}^4+\lae^2 < \mod{\rho}^2\, \mod{\lae'}^2+\lae^2  
< \max (1,\mod{\rho}^2)\, (\mod{\lae'}^2+\lae^2)< \max (1,\mod{\rho}^4)
\leq (2\mod{\rho})^4,
$$
because $\mod{\rho}$ is always at least $1/2$.
We majorise the
integral on the right hand side of (\ref{f: integral}) by
integrating on $\NBall{2\mod{\rho}}$ instead than on $\Ball{\phantom{}}$.  Then,
changing variables
$(\lae',\lae) = (\vep^{1/2}\, v', \vep \, v_\ell)$,
we see that 
$$
\int_{\Ball{\phantom{}}} \bigmod{ Q \bigl(\la + i (1- \vep) \eta \bigr) }^{-\ga}
\wrt\la 
\leq C  \int_{\NBall{4\mod{\rho}/{\sqrt{\vep}}}}
\frac{ \vep^{(\ell+1)/2-\ga} }{ \bigl[(\mod{v'}^2+1)^2+v_\ell^2\bigr]^{\ga/2}}\wrt v' \wrt v_\ell.
$$

If $\ell+1=2\ga$, then (\ref{f: integral}) is bounded by $C\,\log(1/\vep)$,
as required.

If $\ell+1<2\ga$, then (\ref{f: integral}) is bounded by
$$
\vep^{(\ell+1)/2-\ga} \Bigl[\int_{\NBall{1}}
\frac{\wrt v' \wrt v_\ell}{ \bigl[(\mod{v'}^2+1)^2+v_\ell^2\bigr]^{\ga/2}}
+ \int_{\NBall{1}^c} \frac{\wrt v' 
\wrt v_\ell}{(\mod{v'}^4 + v_\ell^2)^{\ga/2}} \Bigr] 
\leq C \, \vep^{(\ell+1)/2-\ga},
$$
as required.
\end{proof}

Lemma~\ref{l: pointwiseestimates} below will 
be used in {Step II} of the proof of 
Theorem~\ref{t: main result} to control the kernel~$k_B$ 
away from the walls of $\fra^+$, whereas Lemma~\ref{l: inductivestep}
below is needed in {Step III} of the same proof
to control the size of $k_B$ near the walls of $\fra^+$.

\begin{lemma}\label{l: pointwiseestimates}
Suppose that $\kappa$ is in $[0,1]$.  Set
$J:= \ell+1$ and denote by $L$ the least integer $\geq (\ell+1)/2$.
For any function $m$, which is holomorphic in $\TWpiut$,
for some $t$ in $\BR^-$, and such that $m \, \, \e^{-Q/2}$
is in $\lu{\fra^*}$ (with respect to the 
Lebesgue measure), define $k_1: \fra^+ \to \BC$~by
$$
k_1 (H)
= \int_{\fra^*} m(\la) \, \e^{-Q(\la)/2}\, \e^{i\la(H)} \wrt \la
\quant H \in \fra^+.
$$
The following hold:
\begin{enumerate}
\item[\itemno1]
there exists a constant $C$ such that for all $m$ in $H'(\TWpiut;J,\kappa)$
and for all $H$ in $\fra^+$
$$
\mod{k_1(H)}
\leq 
\begin{cases}
C\, \norm{m}{H'(\TWpiut;J,\kappa)} \,
{\e^{-\rho(H)}}{\bigl[ 1+ \cN(H)\bigr]^{-\ell-1+2\kappa} }
 & \hbox{if $0<\kappa\leq 1$} \\ 
C\, \norm{m}{H'(\TWpiut;J,0)} \,
     \e^{-\rho(H)}\, {\bigl[ 1+ \cN(H)\bigr]^{-\ell-1} } \, 
    \log \bigl[ 2+ \cN(H)\bigr] 
&  \hbox{if $\kappa = 0$}; 
\end{cases}
$$
\item[\itemno2]
if either $0<\kappa\leq 1$ or $\kappa = 0$ and $\ell$ is even, then
there exists a constant $C$ such that for all $m$ in $H'(\TBpiut;J,\kappa)$
$$
\mod{k_1(H)}
\leq 
C\, \norm{m}{H(\TBpiut;L,\kappa)} \,
{\e^{-\mod{\rho}\,\mod{H}}}{\bigl[1+ 
\rho(H)\bigr]^{-(\ell+1)/2+\kappa}}
\quant H \in \fra^+.
$$
Similarly, if $\kappa=0$ and $\ell$ is odd, then 
there exists a constant $C$ such that for all $m$ in $H'(\TBpiut;J,\kappa)$
$$
\mod{k_1(H)}
\leq 
C\, \norm{m}{H(\TBpiut;L,0)} \,
\e^{-\mod{\rho}\,\mod{H}}\, \bigl[1+ \rho(H)\bigr]^{-(\ell+1)/2}
\, \log \bigl[2+ \rho(H)\bigr]
\quant H \in \fra^+.
$$
\end{enumerate}
\end{lemma}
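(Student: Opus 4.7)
The plan is to exploit the holomorphy of $m$ in $\TWpiut$ by shifting the contour of integration, producing the exponential factor, then integrating by parts to extract polynomial decay in Cartan co-ordinates. For part \rmi, fix $\vep\in(0,1/4)$ and shift from $\fra^*$ to $\fra^*+i(1-\vep)\rho$. Since $(1-\vep)\rho\in\bW$ and $|\e^{-Q(\xi+i\eta)/2}|\leq\e^{-|\xi|^2/2}$ whenever $|\eta|\leq|\rho|$, the integrand is holomorphic in the shifted tube with Gaussian decay uniformly in horizontal strips; Cauchy's theorem yields
\begin{equation*}
k_1(H)=\e^{-(1-\vep)\rho(H)}\int_{\fra^*}m\bigl(\xi+i(1-\vep)\rho\bigr)\,\e^{-Q(\xi+i(1-\vep)\rho)/2}\,\e^{i\xi(H)}\wrt\xi.
\end{equation*}
For any multiindex $(I',i_\ell)$ with $|I'|+i_\ell\leq J$, integrating by parts $i_j$ times in each real variable $\xi_j$ extracts the factor $\prod_j(iH_j)^{i_j}$, giving
\begin{equation*}
\prod_{j=1}^{\ell}|H_j|^{i_j}\cdot|k_1(H)|\leq\e^{-(1-\vep)\rho(H)}\int_{\fra^*}\bigl|D^{(I',i_\ell)}[m\,\e^{-Q/2}](\xi+i(1-\vep)\rho)\bigr|\wrt\xi.
\end{equation*}

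Next, expanding the derivative by Leibniz as a finite sum of $D^{(J',j_\ell)}m\cdot P_{I-J}(\la)\,\e^{-Q/2}$ (with $P_{I-J}$ a polynomial of degree $\leq|I-J|$), bounding $|D^{(J',j_\ell)}m|$ via Definition~\ref{def: moltiplicatori}, and using the Gaussian $\e^{-|\xi|^2/2}$ to absorb the polynomial factors outside $\bB$, we split $\fra^*=\bB\cup\bB^c$. On $\bB^c$ the integral is $O(1)$ uniformly in $\vep$; on $\bB$ it reduces to $\int_\bB|Q(\xi+i(1-\vep)\rho)|^{-\ga}\wrt\xi$ with $\ga=\kappa+j_\ell+|J'|/2$, which by Lemma~\ref{l: powers of Q} is $O(1+\vep^{(\ell+1)/2-\ga})$ off, and $O(\log(1/\vep))$ at, the critical exponent $\ga=(\ell+1)/2$. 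Collecting,
\begin{equation*}
\prod_j|H_j|^{i_j}\,|k_1(H)|\leq C\,\norm{m}{H'(\TWpiut;J,\kappa)}\,\e^{-(1-\vep)\rho(H)}\,\bigl(1+\vep^{(\ell+1)/2-\kappa-i_\ell-|I'|/2}\bigr),
\end{equation*}
with the parenthetical replaced by $\log(1/\vep)$ in the critical case. Since the $H'$-norm is invariant under orthogonal rotations of $\rho^\perp$, we rotate so that $\vep_1=H'/|H'|$ and select the multiindex with all perpendicular weight on the first co-ordinate; then $\prod_j|H_j|^{i_j}=|H'|^{|I'|}H_\ell^{i_\ell}$. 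Splitting $\fra^+$ into the parabolic regime $\fra^+\cap\Parabolic$ (where $\cN(H)^{\ell+1-2\kappa}\asymp H_\ell^{(\ell+1)/2-\kappa}$) and its complement (where $\cN(H)^{\ell+1-2\kappa}\asymp|H'|^{\ell+1-2\kappa}$), in each regime we pick $(I',i_\ell)$ with $|I'|/2+i_\ell=(\ell+1)/2-\kappa$ (rounded to the nearest integers, with a brief interpolation when $(\ell+1)/2-\kappa\notin\BZ$) so that $|H'|^{|I'|}H_\ell^{i_\ell}\gtrsim\cN(H)^{\ell+1-2\kappa}$. Setting $\vep\asymp(1+\cN(H))^{-2}$, the bound $H_\ell\leq\cN(H)^2$ keeps $\vep\rho(H)$ bounded, so $\e^{-(1-\vep)\rho(H)}\asymp\e^{-\rho(H)}$, and \rmi\ follows; the logarithmic correction $\log(2+\cN(H))$ appears precisely when $\kappa=0$, where the extremal multiindex meets the critical exponent of Lemma~\ref{l: powers of Q}.

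For part \rmii, the isotropic condition (every derivative of $m$ gains $|Q|^{-1}$) permits shifting in the direction $H/|H|$ rather than $\rho/|\rho|$. Identifying $\fra\simeq\fra^*$ via the Killing form, set $\eta_\vep=(1-\vep)|\rho|\,H/|H|$, which lies in $\bB^t$ for $H\in\fra^+$ since $\om^*(H/|H|)>0$. The shift produces $\e^{-(1-\vep)|\rho|\,|H|}$ (as $\eta_\vep(H)=(1-\vep)|\rho|\,|H|$). Integration by parts $i_\ell\leq L=\lceil(\ell+1)/2\rceil$ times in $\xi_\ell$ extracts $H_\ell^{i_\ell}=(\rho(H)/|\rho|)^{i_\ell}$; combining the isotropic estimate $|D^{(0,i_\ell)}m|\leq C|Q|^{-\kappa-i_\ell}$ with Lemma~\ref{l: powers of Q} (applied to $\eta=|\rho|H/|H|$, $\ga=\kappa+i_\ell$), and optimising $\vep\asymp(1+\rho(H))^{-1}$, yields \rmii; the logarithmic factor in the case $\kappa=0$, $\ell$ odd arises once more from the critical exponent. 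The main technical obstacle is the nonisotropic book-keeping in \rmi: translating the IBP bound on $\prod_j|H_j|^{i_j}|k_1(H)|$ into the weight $\cN(H)^{\ell+1-2\kappa}$ requires adapting the multiindex to $H$ (through the rotation and the parabolic regime split), and the isolation of the critical case $\ga=(\ell+1)/2$ of Lemma~\ref{l: powers of Q} is what produces the endpoint logarithm.
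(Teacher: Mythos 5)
Your overall strategy coincides with the paper's: shift the contour to $\fra^*+i(1-\vep)\rho$ (resp.\ to $\fra^*+i(1-\vep)\mod{\rho}H/\mod{H}$ in part \rmii), expand $D^I\bigl(m\,\e^{-Q/2}\bigr)$ by Leibniz, control the region outside $\bB$ by the Gaussian, reduce the region inside $\bB$ to Lemma~\ref{l: powers of Q}, and then choose $\vep$ comparable to the inverse of the relevant scale ($\cN(H)^{-2}$, resp.\ $(1+\rho(H))^{-1}$). Part \rmii\ of your argument is essentially the paper's proof (the paper integrates by parts along the direction $\eta$ rather than in $\la_\ell$, but since the space $H(\TBpiut;L,\kappa)$ is isotropic this is immaterial).

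There is, however, a genuine defect in the multiindex selection in part \rmi. You propose to choose $(I',i_\ell)$ with $\mod{I'}/2+i_\ell=(\ell+1)/2-\kappa$ ``rounded to the nearest integers, with a brief interpolation''. Whenever $(\ell+1)/2-\kappa$ is itself attainable by integers (for instance $\kappa=1/2$ with $\ell$ even, or $\kappa=1$ with $\ell$ odd), this choice makes the exponent $\ga=\kappa+i_\ell+\mod{I'}/2$ equal to the critical value $(\ell+1)/2$ of Lemma~\ref{l: powers of Q}, so the inner integral contributes $\log(1/\vep)$ and your final bound acquires a factor $\log\bigl(2+\cN(H)\bigr)$ which the statement does not permit for $0<\kappa\leq 1$; moreover ``interpolation'' between integer orders of integration by parts is not a defined operation, and it is in fact unnecessary (the non-integer case is the harmless one). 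The correct choice --- which is what the paper does --- is not to tune the order to the target exponent but to take the full order $J=\ell+1$ in each regime: on $\fra^+\cap\Parabolic$ integrate by parts $J$ times in $\la_\ell$ with $\vep=1/\rho(H)$, so that $\ga=\kappa+J>(\ell+1)/2$ strictly and no logarithm appears; on $\fra^+\cap\Parabolic^c$ integrate by parts $J$ times along the direction of $H'$ with $\vep=\mod{H'}^{-2}$, so that $\ga=\kappa+J/2\geq(\ell+1)/2$ with equality exactly when $\kappa=0$, which is precisely where the logarithm in the statement belongs. With this choice the supercritical term and the $O(1)$ term coming from $\fra^*\setminus\bB$ are both dominated, since $H_\ell^{-J}$, resp.\ $\mod{H'}^{-J}$, already beats the target power of $\cN(H)$; your argument, once the selection step is replaced by this one, closes correctly.
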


\begin{proof}
We denote by $m_1$ the function defined by
$$
m_1 (\zeta)= m (\zeta)\, \, \e^{-Q(\zeta)/2}
\quant  \zeta \in\TWpiut .
$$
Observe that $k_1$ (which is the inverse Fourier transform of 
$m_1$) is bounded, because
$m_1$ is in $\lu{\fra^*}$.  Therefore all the estimates 
in \rmi\ and \rmii\ hold trivially for $H$ in 
$\fra^+\cap\NBall{2}$, and we may assume
that $H$ is in $\fra^+\cap \NBall{2}^c$.

First we prove (i).  For the duration of the proof of \rmi\
we write $\rho_\vep$ instead of $(1-\vep)\rho$.
An application of Leibniz's r\^ule shows that there
exists a constant $C$ such that for every
multiindex $(I',i_\ell)$ such that $\mod{I'}+i_\ell \leq J$,
for every $\vep$ in $(0,1/4)$, and  
for every $m$ in $H'(\TWpiut;J,\kappa)$
\begin{equation} \label{f: estimate for derivatives}
\begin{aligned}
& \mod{ D^{(I',i_\ell)} m_1(\la+i\rho_\vep)} \\
&\leq 
\begin{cases}
C \, \norm{m}{H'(\TWpiut;J,\kappa)}
\, {\e^{-\Re Q(\la+i\rho_\vep)/4}}\, {\bigmod{Q\bigl(\la+i\rho_\vep
\bigr)}^{-(i_\ell+\mod{I'})/2}} 
  & \quant \la \in \fra^*\setminus\Ball{\phantom{}} \\     
C \, \norm{m}{H'(\TWpiut;J,\kappa)}
\, {\bigmod{Q\bigl(\la+i\rho_\vep
\bigr)}^{-\kappa-i_\ell-\mod{I'}/2}} 
  & \quant \la \in \Ball{\phantom{}}.
\end{cases}
\end{aligned}
\end{equation}
Assume that $\vep$ is in the interval $\bigl(0,C/\rho(H)\bigr)$ for 
some fixed constant $C$.
Since $m_1$ is holomorphic in $\TWpiut$, we may move the
contour of integration to the space $\fra^*+i \rho_\vep$,
and obtain 
$$
\begin{aligned}
\mod{k_1(H)}
& =  \e^{- (1-\vep)\rho(H)} \, 
      \Bigmod{\int_{\fra^*} m_1\bigl(\la+i\rho_\vep\bigr)
      \, \e^{i\la(H)} \wrt \la}  \\
& \leq C \, \e^{-\rho(H)} \Bigmod{\int_{\fra^*} m_1\bigl(\la+i\rho_\vep\bigr)
      \, \e^{i\la(H)} \wrt \la}
\quant H \in \fra^+\cap \NBall{2}^c.
\end{aligned}
$$
We shall treat the cases where $H$ is in
$\fra^+\cap \NBall{2}^c\cap \Parabolic$ and $H$ is in
$\fra^+\cap\NBall{2}^c\cap\Parabolic^c$ separately (the region $\Parabolic$
is defined in (\ref{f: Parabolic})).

First suppose that $H$ is in $\fra^+\cap \NBall{2}^c\cap \Parabolic$
and choose $\vep = 1/\rho(H)$.
By integrating by parts~$J$ times
with respect to the variable $\la_\ell$, we see that
$$
\begin{aligned}
\mod{k_1(H)}
& \leq C \, \e^{-\rho(H)} \Bigmod{i^{-J} \, H_\ell^{-J}
       \int_{\fra^*} m_1(\la+i\rho_\vep) \,
       \, \partial_\ell^J \e^{i\la(H)} \wrt \la} \\
& =    C \, \e^{-\rho(H)} \Bigmod{ (-i)^{-J} \, H_\ell^{-J}
       \int_{\fra^*} \partial_\ell^J m_1(\la+i\rho_\vep) \,
       \e^{i\la(H)} \wrt \la} \\
& \leq \smallfrac{C}{H_\ell^J} \, \e^{-\rho(H)}
    \Bigl[\int_{\fra^*\setminus\Ball{\phantom{}}}
          \mod{ \partial_\ell^J m_1(\la+i\rho_\vep)} \wrt \la
   + \int_{\Ball{\phantom{}}} \mod{ \partial_\ell^J m_1(\la+i\rho_\vep)}
          \wrt \la \Bigr].
\end{aligned}
$$
We use estimates (\ref{f: estimate for derivatives}) with
$I'=0'$ and $i_\ell = J$, and obtain 
$$
\mod{k_1(H)}
\leq C\, \norm{m}{H'(\TWpiut;J,\kappa)}\, 
 \smallfrac{\e^{-\rho(H)}}{H_\ell^J} \,
 \Bigl[\int_{\fra^*\setminus \Ball{\phantom{}}} 
 \frac{\e^{-\Re Q(\la+i\rho_\vep)/4}}{\bigmod{Q\bigl(\la+i\rho_\vep
 \bigr)}^{i_\ell/2}} \wrt \la 
  + \int_{\Ball{\phantom{}}} \bigmod{ Q(\la+i\rho_\vep) }^{-\kappa-J}\wrt \la\Bigr].
$$
It is straightforward to check that $\Re Q(\la+i\rho_\vep) \geq \mod{\la}^2$
for all $\la$ in $\fra^*$.  Hence the first integral is majorised by
$
\int_{\fra^*\setminus \Ball{\phantom{}}} 
\exp (-\mod{\la}^2/4)\, \mod{\la}^{-i_\ell} \wrt \la, 
$
which is clearly convergent and independent of $\vep$.  To estimate
the second integral we observe that $\kappa + J > (\ell+1)/2$
for every $\kappa$ in $[0,1]$.  Then Lemma~\ref{l: powers of Q}
(with $\ga = \kappa + J$) implies that 
$$
\int_{\Ball{\phantom{}}} \mod{ Q(\la+i\rho_\vep) }^{-\kappa-J}\wrt \la
\leq C \, (1+ \vep^{(\ell+1)/2-J-\kappa}) \quant \vep \in (0,1/4).
$$  
Recall that $\vep = 1/\rho(H)$,
and that $H$ is in $\fra^+\cap\NBall{2}^c\cap\Parabolic$, so that
$H_\ell$ is (positive and) bounded away from $0$. Therefore
$$
\begin{aligned}
\mod{k_1(H)}
& \leq C\, \norm{m}{H'(\TWpiut;J,\kappa)}\,\smallfrac{\e^{-\rho(H)}}{H_\ell^J}
    \, \bigl[1 + H_\ell^{J+\kappa-(\ell+1)/2} \bigr] \\
& \leq C \, \norm{m}{H'(\TWpiut;J,\kappa)}\,\, \e^{-\rho(H)} \, H_\ell^{\kappa-(\ell+1)/2} \\
& \leq C \, \norm{m}{H'(\TWpiut;J,\kappa)}\,\, \e^{-\rho(H)} \, \bigl[ 1+ \cN(H) \bigr]^{2\kappa-(\ell+1)}
\quant H \in \fra^+\cap\NBall{2}^c\cap\Parabolic,
\end{aligned}
$$
as required.

Next suppose that $H$ is in $\fra^+\cap\NBall{2}^c\cap\Parabolic^c$ 
and choose $\vep = 1/\mod{H'}^2$.
Note that $\vep \leq C/\rho(H)$, where $C$ does not depend on $H$.
Suppose that $H = (H',H_\ell)$ is given.
Denote by $\partial'$ the directional derivative on $\fra^*$
in the direction of $H'$.
By integrating by parts, we see that
\begin{equation} \label{f: secondo caso}
\begin{aligned}
\mod{k_1(H)}
& \leq C \, \e^{-\rho(H)} \Bigmod{i^{-J} \, \mod{H'}^{-J}
       \int_{\fra^*} m_1 \bigl(\la+i\rho_\vep\bigr) \,
       (\partial')^J \e^{i\la(H)} \wrt \la} \\
& =    C \, \e^{-\rho(H)} \Bigmod{\mod{H'}^{-J}
       \int_{\fra^*} (\partial')^J m_1 \bigl(\la+i\rho_\vep\bigr) \,
       \e^{i\la(H)} \wrt \la}.
\end{aligned}
\end{equation}
By arguing much as above (we use (\ref{f: estimate for derivatives})
with $\mod{I'} = J$ and $i_\ell = 0$), we see that if $\kappa >0$,
then
$$
\begin{aligned}
\mod{k_1 (H)}
& \leq C\,\norm{m}{H'(\TWpiut;J,\kappa)}\,
        \smallfrac{\e^{-\rho(H)}}{\mod{H'}^{J}} \, \,
        \bigl[1 + \vep^{(\ell+1-J)/2-\kappa} \bigr] \\
& \leq C\,\norm{m}{H'(\TWpiut;J,\kappa)}\,
         \smallfrac{\e^{-\rho(H)}}{\mod{H'}^{J}} \, \,
         \bigl[1 + \mod{H'}^{J+2\kappa-(\ell+1)} \bigr] \\
& \leq C \,\norm{m}{H'(\TWpiut;J,\kappa)}\,
         \e^{-\rho(H)} \, \bigl[1 + \cN(H) \bigr]^{2\kappa-(\ell+1)}
\quant H \in \fra^+\cap\NBall{2}^c\cap\Parabolic^c,
\end{aligned}
$$
as required to conclude the proof of \rmi\ in the case $\kappa>0$.
If, instead, $\kappa = 0$, then by arguing much as above we see
that 
$$
\mod{k_1(H)}
\leq C\, \norm{m}{H'(\TWpiut;J,0)}\, 
 \smallfrac{\e^{-\rho(H)}}{\mod{H'}^J} \,
 \Bigl[\int_{\fra^*\setminus \Ball{\phantom{}}} 
 \frac{\e^{-\Re Q(\la+i\rho_\vep)/4}}{\bigmod{Q\bigl(\la+i\rho_\vep
 \bigr)}^{J}} \wrt \la 
  + \int_{\Ball{\phantom{}}} \bigmod{ Q(\la+i\rho_\vep) }^{-J}\wrt \la\Bigr].
$$
By Lemma~\ref{l: powers of Q} the last integral is estimated 
by $C \, \log(1/\vep)$, so that 
$$
\begin{aligned}
\mod{k_1(H)}
& \leq C\, \norm{m}{H'(\TWpiut;J,0)}\, 
     \smallfrac{\e^{-\rho(H)}}{\mod{H'}^J} \, \log \mod{H'} \\
&  \leq C\, \norm{m}{H'(\TWpiut;J,0)} \,
     {\e^{-\rho(H)}\, \log \bigl[ 2+ 
     \cN(H)\bigr] }{\bigl[ 1+ \cN(H)\bigr]^{-\ell-1} },
\end{aligned}
$$
where we have used the fact that there exists a positive constant $c$
such that 
$$
c \,\, \cN(H',H_\ell) \leq \mod{H'} \leq \cN(H',H_\ell)
\quant (H',H_\ell) \in 
\fra^+\cap\NBall{2}^c\cap\Parabolic^c.
$$
The proof of \rmi\ is complete. 

Next we prove (ii).
Observe that for any vector $\eta$ in $\partial \bB^+$ and any positive
integer $j\leq L$ the derivative $\partial_\eta^j m$ of order
$j$ in the direction of $\eta$ may be written as a linear combination
of the derivatives $D^I m$ with $\mod{I}=j$.
Therefore
\begin{equation}\label{f: derivativeeta}
\mod{\partial_{\eta}^jm(\zeta)}\leq C\, \norm{m}{H(\TBpiut;J,\kappa)} \,
\mod{Q(\zeta)}^{-\kappa-j}
\quant \zeta\in \Ball{\phantom{}}+i\bB^+.
\end{equation}
By the Leibniz r\^ule, $m_1$ satisfies a similar estimate.
Given $H$ in $\fra^+\cap\NBall{2}^c$, define $\vep$ and $\eta$ by
$
\vep= {1}/({\mod{\rho}\,\mod{H}})$
and $\eta=({\mod{\rho}}/{\mod{H}})\,H$.
For the duration of the proof of \rmii\
we write $\eta_\vep$ instead of $(1-\vep)\eta$.
By shifting the integration to the space
$\fra^*+i \eta_\vep$, and integrating by parts
$L$ times, we see that
$$
\begin{aligned}
k_1(H)
& = \e^{- (1-\vep)\mod{\rho} \mod{H}} \int_{\fra^*}
       m_1\bigl(\la+i\eta_\vep \bigr)
       \, \e^{i\la(H)} \wrt \la \\
& = \smallfrac{\e^{- (1-\vep)\mod{\rho} \mod{H}}}{(i\, \eta(H))^{L}}
       \int_{\fra^*} m_1\bigl(\la+i\eta_\vep \bigr)
       \, \partial_{\eta}^L\e^{i\la(H)} \wrt \la \\
& = \smallfrac{\e^{- (1-\vep)\mod{\rho} \mod{H}}}{(-i\, \mod{\rho}\,
      \mod{H})^{L}}
       \int_{\fra^*} \partial_{\eta}^L m_1\bigl(\la+i\eta_\vep \bigr)
       \, \e^{i\la(H)} \wrt \la.
\end{aligned}
$$
By arguing as in the proof of \rmi\
we see that there exists a constant $C$ such that for 
every $\vep$ in $(0,1/4)$
\begin{equation} \label{f: outside b1II}
\int_{\fra^*\setminus\Ball{\phantom{}}}
\mod{ \partial_\eta^L m_1(\la+i\eta_\vep)} \wrt \la
\leq C \, \norm{m}{H(\TBpiut;L,\kappa)}
      \quant m \in H(\TBpiut;L,\kappa).
\end{equation}
This and (\ref{f: derivativeeta}) imply that
$$
\mod{k_1(H)}
\leq C \, \norm{m}{H(\TBpiut;L,\kappa)}\, \smallfrac{\e^{-\mod{\rho} 
   \mod{H} }}{\mod{H}^L} \, \Bigl[1 +
\int_{\Ball{\phantom{}}} \bigmod{Q\bigl(\la+i\eta_\vep \bigr) }^{-\kappa-L}
\wrt \la \Bigr].
$$
We use Lemma~\ref{l: powers of Q} to estimate the last integral.
If $\kappa = 0$ and $\ell$ is odd, then $L = (\ell+1)/2$.
Therefore the last integral is majorised
by $C \, \log(1/\vep)$.  Thus,
$$
\begin{aligned}
\mod{k_1(H)}
& \leq C \, \norm{m}{H(\TBpiut;L,0)}\, \smallfrac{\e^{-\mod{\rho} 
        \mod{H} }}{\mod{H}^L} \,\log(1/\vep) \\
& \leq C\, \norm{m}{H(\TBpiut;L,0)} \,
        {\e^{-\mod{\rho}\,\mod{H}}} \, {\bigl[1+ \rho(H)\bigr]^{-(\ell+1)/2}}
        \,  \log \bigl[2+ \rho(H)\bigr],
\end{aligned}
$$
where we have used the fact that if $H$ is in $\fra^+$, then
$\rho(H)=\mod{\rho}H_1\leq \mod{\rho}\mod{H}$.
If, instead, either $\ell$ is even, or $\kappa >0$, then
$L+\kappa > (\ell+1)/2$, so that by Lemma~\ref{l: powers of Q}
$$
\begin{aligned}
\mod{k_1(H)}
& \leq C \, \norm{m}{H(\TBpiut;L,\kappa)}\,
       \smallfrac{\e^{-\mod{\rho} \mod{H} }}{\mod{H}^L} \,
       \bigl[ 1 + \mod{H}^{L+\kappa-(\ell+1)/2} \bigr]\\
& \leq C \, \norm{m}{H(\TBpiut;L,\kappa)}\,
       \e^{-\mod{\rho} \mod{H} } \,
       \bigl[ 1+ \mod{H} \bigr]^{\kappa-(\ell+1)/2}\\
& \leq C \, \norm{m}{H(\TBpiut;L,\kappa)}\,
       \e^{-\mod{\rho} \mod{H} } \,
       \bigl[ 1+ \rho(H) \bigr]^{\kappa-(\ell+1)/2}
\quant H \in \fra^+\cap\NBall{2}^c.
\end{aligned}
$$
The proof of \rmii\ is complete.
\end{proof}

\begin{definition}
For any $s$ in $[0,\infty)$ define the function $\Upsilon^s$
and the measure~$\muD$~by
$$
\Upsilon^s (\la) 
= (1+ \mod{\la})^s
\qquad\hbox{\and}\qquad
\wrt \muD(\la) = \Upsilon^s(\la) \, \wrt \la
\quant \la \in \fra^*.
$$
Suppose that $\bE$ is a Weyl invariant subset of\, $\bW$, and that
$J$ is a nonnegative integer.  
Denote by $\Sspace{\bE,J}$ the vector space of all Weyl invariant 
holomorphic functions $m$ in $\TE$ such that~$S_{\bE,J}^{s} (m) < \infty$
for all $s$ in $[0,\infty)$,~where 
$$
S_{\bE,J}^{s} (m)
= \max_{\mod{I} \leq J} \, \sup_{\eta\in \bE}\, 
\int_{\fra^*}\mod{D^I m(\la+i\eta)} \wrt \muD(\la).
$$
We endow $\Sspace{\bE,J}$ with the locally convex topology
induced by the family of seminorms $\{S_{\bE,J}^{s} :~s\in [0,\infty) \}$.
With this topology $\Sspace{\bE,J}$ becomes a Fr\'echet space.
\end{definition}

\begin{remark} \label{r: est for Upsilon}
Observe that for every $s$ in $[0,\infty)$
there exists a constant $C$ such that 
$$
\Upsilon^s (\la) 
\leq (1+ \mod{\la+ i \eta})^s
\leq C \, \Upsilon^s (\la) 
\quant \la \in \fra^* \quant \eta \in \bW.
$$
Consequently
$$
S_{\bE,J}^{s}(m)
\leq \max_{\mod{I} \leq J} \, \sup_{\eta\in \bE}\, 
\int_{\fra^*}\mod{D^I m(\la+i\eta)} \, (1+ \mod{\la+ i \eta})^s \wrt \la
\leq C \, S_{\bE,J}^{s} (m)
\quant m \in \Sspace{\bE,J}.
$$
We shall use this observation without any further comment.
\end{remark}

For any nontrivial subset $F$ of $\Sppp$ and $0<\de\leq\vep<\infty$
define the region $\pareteF{F}{\de}{\vep}$~by
\begin{equation} \label{f: cono-parete}
\pareteF{F}{\de}{\vep} 
=\{ H\in\Strip_2: \hbox{$\al(H)\leq \de\mod{H}$
$\,\,\forall \al\in F$, and $\al(H)\geq \vep\mod{H}$ 
$\,\,\forall \al\in \Sppp\setminus F$}\}.
\end{equation}
In the following proposition we put together some useful
facts concerning the sets $\pareteF{F}{\de}{\vep}$ that 
will be used below.
For any $c$ in $\BR^+$ define $(\Strip_F)_c$ and $(\Strip^F)_c$ by
$$
(\Strip_F)_c = \{H_F \in \OV{\aFb}: 0\leq \om_F(H_F) \leq c\}
\qquad\hbox{and}\qquad
(\Strip^F)_c = \{H^F \in \OV{\aFa}: 0\leq \om^F(H^F) \leq c\}.
$$

\begin{lemma} \label{l: insiemi}
Suppose that $F$ is a nontrivial subset of $\Sppp$.
The following hold:
\begin{enumerate}
\item[\itemno1]
if $H$ is in $\pareteF{F}{\de}{\vep} \cap \NBall{1}^c$, 
then 
$\HFb$ is in $\OV{(\aFb)^+}$, $\HFa$ is in $(\aFa)^+$,
\begin{equation}\label{f: estimatesinconeF}
\omFa(\HFa)\geq \omFa(H)\geq \vep\mod{H}\geq \vep
\qquad {\rm{and}} \qquad \mod{\HFb}\leq \gamma\,\de\mod{H},
\end{equation}
where $\gamma$ is a positive constant which depends 
on the root system $\Sigma$;
\item[\itemno2]
if $H$ is in $\pareteF{F}{\de}{\vep}$,
then $H_F$ is in $(\Strip_F)_2$.
\end{enumerate}
\end{lemma}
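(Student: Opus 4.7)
The plan is to exploit the orthogonal decomposition $\fra = \aFb \oplus \aFa$ together with two root-theoretic observations: (a) for every $\alpha \in F$, the linear functional $\alpha$ vanishes on $\aFa$, since $H_\alpha \in \aFb$ by definition of $\aFb$ and $\aFa \perp \aFb$; consequently $\alpha(H) = \alpha(\HFb)$ for all $H \in \fra$; and (b) for every $\beta \in \Sppp \setminus F$ and every $\HFb \in \OV{(\aFb)^+}$, one has $\beta(\HFb) \leq 0$. Fact (b) is a standard consequence of the non-negativity of the inverse of the Cartan matrix of the root subsystem generated by $F$: indeed, this ensures that any $\HFb$ in the closed dominant chamber $\OV{(\aFb)^+}$ is a non-negative linear combination of $\{H_{\alpha'}\}_{\alpha' \in F}$, so that $\beta(\HFb) = \sum_{\alpha'} c_{\alpha'} \prodo{\beta}{\alpha'} \leq 0$, since $c_{\alpha'} \geq 0$ and $\prodo{\beta}{\alpha'} \leq 0$ for distinct simple roots.

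For part (i), fix $H \in \pareteF{F}{\de}{\vep} \cap \NBall{1}^c$. Applying (a) together with the inclusions $H \in \Strip_2$ and $H \in \pareteF{F}{\de}{\vep}$, we find that $0 \leq \alpha(\HFb) = \alpha(H) \leq \de \mod{H}$ for every $\alpha \in F$, which immediately shows $\HFb \in \OV{(\aFb)^+}$. Since $\{\alpha|_{\aFb}\}_{\alpha \in F}$ is a basis of $\aFb^*$, the Euclidean norm on $\OV{(\aFb)^+}$ is equivalent to $\max_{\alpha \in F}\alpha(\cdot)$, with constants depending only on $\Sigma$; this yields $\mod{\HFb} \leq \gamma \de \mod{H}$. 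For any $\beta \in \Sppp \setminus F$, fact (b) then gives
$$
\beta(\HFa) = \beta(H) - \beta(\HFb) \geq \beta(H) \geq \vep \mod{H},
$$
so $\HFa \in (\aFa)^+$; taking the minimum over $\beta$ proves $\omFa(\HFa) \geq \omFa(H) \geq \vep \mod{H}$. A short elementary calculation shows that $\cN(H) \geq 1$ forces $\mod{H} \geq 1$, whence the final inequality $\omFa(\HFa) \geq \vep$.

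For part (ii), the assumption $0 < \de \leq \vep$ together with the defining inequalities of $\pareteF{F}{\de}{\vep}$ yields $\alpha(H) \leq \de \mod{H} \leq \vep \mod{H} \leq \beta(H)$ for all $\alpha \in F$ and $\beta \in \Sppp \setminus F$, so the minimum defining $\omega(H)$ is attained on $F$. Combined with (a), this gives $\omFb(\HFb) = \min_{\alpha \in F}\alpha(\HFb) = \min_{\alpha \in F}\alpha(H) = \omega(H)$, and $H \in \Strip_2$ then yields $0 \leq \omFb(\HFb) \leq 2$, i.e., $\HFb \in (\Strip_F)_2$. The main obstacle is the clean invocation of fact (b); beyond that, everything reduces to straightforward bookkeeping with the orthogonal decomposition $\fra = \aFb \oplus \aFa$.
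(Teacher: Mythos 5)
Your proof is correct, but it is organised differently from the paper's. For part \rmi\ the paper gives no argument at all: it simply cites \cite[3.16.2--3.16.4]{AJ}, so your self-contained derivation --- the identity $\al(H)=\al(\HFb)$ for $\al\in F$ (because $H_\al\in\aFb$ and $\aFa\perp\aFb$), the norm comparison on $\OV{(\aFb)^+}$ giving $\mod{\HFb}\le\gamma\,\de\,\mod{H}$, and the fact that the inverse Gram matrix of the simple system $F$ is entrywise nonnegative, so that a dominant element of $\aFb$ is a nonnegative combination of $\{H_\al\}_{\al\in F}$, whence $\be(\HFb)\le 0$ and $\be(\HFa)\ge\be(H)\ge\vep\mod{H}$ for $\be\in\Sppp\setminus F$ --- is a genuine addition rather than a reproduction; it is essentially the argument underlying the cited estimates of Anker--Ji, and it buys independence from \cite{AJ} at the price of invoking the standard M-matrix positivity fact, while your observation that $\cN(H)\ge 1$ forces $\mod{H}\ge 1$ correctly supplies the last inequality in (\ref{f: estimatesinconeF}). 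For part \rmii\ your route is essentially the paper's: both hinge on $\al(H_F)=\al(H)$ for $\al\in F$ and on the hypothesis $\de\le\vep$; the paper argues by cases according to whether the simple root attaining $\om(H)$ lies in $F$ or in $\Sppp\setminus F$ (obtaining the bound $2\de/\vep\le 2$ in the second case), whereas you note once and for all that the defining inequalities of $\pareteF{F}{\de}{\vep}$ force the minimum defining $\om(H)$ to be attained on $F$, so that $\om_F(H_F)=\om(H)\in[0,2]$ directly --- slightly cleaner bookkeeping that also makes the lower bound $\om_F(H_F)\ge 0$ explicit.
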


\begin{proof}
For the proof of \rmi\ see \cite[3.16.2-3.16.4]{AJ}.  

To prove \rmii\ suppose that $H$ is in $\pareteF{F}{\de}{\vep}$ and
that $\om(H) = \al(H)$ for some $\al$ in $\Sppp$.
If~$\al$ is in $F$, then $\al(H_F) = \al(H) \leq 2$.
If, instead, $\al$ is in $\Sppp\setminus F$, then 
$$
\al(H) \geq \vep \, \mod{H} \geq \smallfrac{\vep}{\de}\, \be(H)
\quant \be \in F.
$$
Hence
$$
\om_F(H_F) 
\leq \smallfrac{\de}{\vep}\, \al(H) 
\leq 2\, \smallfrac{\de}{\vep} \leq 2,
$$
so that $H_F$ is in $(\Strip_F)_2$, as required.

\end{proof}

Define $\si$ by
\begin{equation}\label{f: sigma}
\si
= \min\{\mod{\rFb}: \emptyset\subset F\subseteq \Sppp\},
\end{equation}
and denote by $\Es{\si}$ the Weyl invariant subset of $\bW$ defined by
$$
\Es{\si}
= \{\eta\in\bW: \mod{\eta - w\cdot \rho}\geq\si \, \hbox{for all $w\in W$}\}.
$$
Set $\Cosh_{2\rho}(H) := \sum_{w\in W} \e^{2w\cdot \rho(H)}$ for all $H$
in $\fra$ and denote by $\cM_{2\rho}$ the multiplication operator 
acting on $K$--bi-invariant functions $f$ on $G$ by
$$
\bigl(\cM_{2\rho} f\bigr) (\exp H)
=  \Cosh_{2\rho} (H)  \, f(\exp H)
\quant H\in \fra.
$$
Note that there exist positive constants $C_1$ and $C_2$ such that
\begin{equation} \label{f: stima Cosh}
C_1 \, \e^{2\rho(H)}
\leq \Cosh_{2\rho}(H)
\leq C_2 \, \e^{2\rho(H)}
\quant H \in \OV{\fra^+}.
\end{equation}

The proof of the following lemma is reminiscent of 
the proof of \cite[Thm~3.7]{AJ}
and of that of the main result in \cite[Section~7.10]{GV}.  
All these proofs use the Trombi--Varadarajan
expansion of spherical functions and an induction argument.

\begin{lemma}\label{l: inductivestep}
The following hold:
\begin{enumerate}
\item[\itemno1]
the map $\cM_{2\rho} \circ \cH^{-1}$ is bounded from 
$\Sspace{\bE_\si,0}$ to $\ly{\Strip_2}$;
\item[\itemno2]
if $J \geq \ell + 1$, then
the map $\cM_{2\rho} \circ \cH^{-1}$ is bounded from
$\Sspace{\bE_\si,J}$ to $\lu{\Strip_2}$ (with respect
to the Lebesgue measure).
\end{enumerate}
\end{lemma}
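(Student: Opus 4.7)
The plan is to induct on the rank $\ell$ of $X$. The base case $\ell=1$ is immediate: then $\Strip_2\cap\OV{\fra^+}$ is compact, so $\cM_{2\rho}$ is bounded on it, and $\mod{\cH^{-1}m(\exp H)}\leq C\,\Ss{s}{0}(m)$ follows from $\mod{\vp_\la}\leq 1$ combined with the polynomial Plancherel bound (\ref{f: estimatec}) for $s$ sufficiently large. For the inductive step, fix $\ell\geq 2$ and choose positive $\vep$, then $\de$, small enough that $\Strip_2 = K_0 \cup \bigcup_F \pareteF{F}{\de}{\vep}$, where $F$ runs over the proper nonempty subsets of $\Sppp$ and $K_0\subset\OV{\fra^+}$ is compact (hence contributes trivially to both (i) and (ii)).

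On each $\pareteF{F}{\de}{\vep}$, Lemma~\ref{l: insiemi}\rmi\ yields $\omFa(H)\geq\vep$, so Theorem~\ref{t: asymptoticexpansion} applies. Choose the truncation $N$ large enough that the error term in the expansion, once multiplied by $e^{2\rho(H)}$, gives a uniformly bounded (for (i)) or absolutely integrable (for (ii)) contribution; the factor $e^{-\rho(H)-N\omFa(H)}$ together with the polynomial decay in $\la$ encoded in $\Ss{s}{J}(m)$ makes this possible. The main terms, after the Weyl symmetrization $\la\mapsto w^{-1}\la$ and the identity $\mod{\bc(\la)}^{-2} = \mod{\bcFb(\laFb)}^{-2}(\vbcFa(\la)\bcFa(\la))^{-1}$, reduce to
\[
\frac{|W|}{|\WFb|}\,e^{2\rFb(\HFb)+\rFa(\HFa)}\int_{\astFa}\!\!\int_{\aFb^*}\!\!
\frac{m(\laFb+\laFa)\,\phiFa{\laFb+\laFa}{q}(\exp H)}{\vbcFa(\laFb+\laFa)\,\mod{\bcFb(\laFb)}^2}\,\wrt\laFb\,\wrt\laFa,
\]
having used $2\rho(H)-\rFa(H)=2\rFb(\HFb)+\rFa(\HFa)$. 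For $q=0$, the factorization $\phiFa{\la}{0}(\exp H)=\vp^F_{\laFb}(\exp\HFb)\,e^{i\laFa(\HFa)}$ makes the $\laFa$-integration a Fourier transform in $\HFa$. A contour shift $\laFa\mapsto\laFa+i(1-\vep)\rFa$ absorbs the factor $e^{\rFa(\HFa)}$ up to a bounded multiplicative error, converting the expression (up to constants) into
\[
e^{2\rFb(\HFb)}\bigl(\cH_F^{-1}\wt n_F(\,\cdot\,;\HFa)\bigr)(\exp\HFb),
\qquad
\wt n_F(\laFb;\HFa)
:=\int_{\astFa}\frac{m(\laFb+\laFa+i(1-\vep)\rFa)\,e^{i\laFa(\HFa)}}{\vbcFa(\laFb+\laFa+i(1-\vep)\rFa)}\,\wrt\laFa,
\]
where $\cH_F$ denotes the spherical transform on the rank-$\lFb$ symmetric space $M_F/K_F$. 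Terms with $q\neq 0$ are handled identically, the damping $e^{-q(\HFa)}$ of Theorem~\ref{t: asymptoticexpansion}\rmiii\ providing extra decay.

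To close the induction, one verifies that $\laFb\mapsto\wt n_F(\laFb;\HFa)$ belongs to $\Sspace{(\bE_F)_{\si_F},J}$ on $\aFb^*$, with seminorms dominated by $C(1+\mod{\HFa})^{d_s}\,\Ss{s'}{J}(m)$ for appropriate $d_s$ and $s'$. This is done by differentiating under the integral sign in $\laFb$, using the polynomial derivative bounds (\ref{f: estimatecFbcFa}) on $(\vbcFa)^{-1}$, and exploiting the Schwartz-type decay in $\laFa$ encoded in $\Ss{s'}{J}(m)$. Granted this, the inductive hypothesis for $M_F/K_F$ delivers (i) as a uniform bound over $\HFb\in(\Strip_F)_2$ (hence over $H\in\pareteF{F}{\de}{\vep}$ via Lemma~\ref{l: insiemi}\rmii); for (ii), one integrates first in $\HFb$ using the inductive $\lu{}$-bound and then in $\HFa$ via Fourier--Plancherel combined with the Schwartz decay in $\laFa$, the polynomial factor $(1+\mod{\HFa})^{d_s}$ being absorbed by taking $s'$ large enough.

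The main technical obstacle is the contour shift $\laFa\mapsto\laFa+i(1-\vep)\rFa$: one must verify that the straight-line homotopy $\laFa+it\rFa$, $t\in[0,1-\vep]$, stays inside $T_{\bE_\si}$, i.e., that $t\rFa\in\bW$ and $\mod{t\rFa-w\cdot\rho}\geq\si$ for every $w\in W$ and every $t$ in that range. The containment $t\rFa\in\bW$ for $t\in[0,1)$ follows from $0\in\bW$ and $\rFa\in\OV{\bW}$ by convexity; the distance bound is where the choice $\si=\min_F\mod{\rFb}$ of (\ref{f: sigma}) enters, via the orthogonal decomposition $w\rho=(w\rho)_F+(w\rho)^F$ and the bound $\mod{(w\rho)_F}\geq\mod{\rFb}$ for every $w$. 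Careful book-keeping of the Schwartz seminorms through each inductive reduction is the other technical ingredient needed to make the induction close.
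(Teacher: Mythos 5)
Your overall architecture (induction on the rank, covering $\Strip_2$ by the regions $\pareteF{F}{\de}{\vep}$, Trombi--Varadarajan expansion with a large truncation $N$, reduction of the $q=0$ term to the spherical transform on the rank-$\mod{F}$ space $\MFa$) is the same as the paper's, but there is a genuine gap at the crucial point, namely the contour shift in the $\laFa$-integral. You shift by the \emph{fixed} amount $i(1-\vep)\rFa$ and claim this absorbs the prefactor $\e^{\rFa(\HFa)}$ ``up to a bounded multiplicative error''. It does not: the shift produces $\e^{-(1-\vep)\rFa(\HFa)}$, so the unabsorbed factor is $\e^{\vep\rFa(\HFa)}$, which grows exponentially on $\pareteF{F}{\de}{\vep}$ (there $\HFa$ is unconstrained in size, and $\rFa(\HFa)$ is comparable to $\mod{H}$). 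For the $q=0$ term there is no compensating decay, so your final bound for $\e^{2\rho(H)}\,h_0^F(H)$ is $\e^{\vep\rFa(H)}$ rather than a constant, and both the $\ly{}$ estimate in \rmi\ and the $\lu{}$ estimate in \rmii\ collapse. You also cannot simply let $\vep\to 0$, because $\rFa$ lies on the boundary of $\bW$, outside $\Es{\si}$, where the seminorms of $m$ give no control. The paper's way out is to make the shift depend on $H$: it moves the contour to $\astFa+i(1-v)\rFa$ with $v=1/\rFa(H)$, so the leftover factor is $\e^{v\rFa(H)}=\e$, while $(1-v)\rFa$ still satisfies $\mod{\rho-(1-v)\rFa}=\mod{\rFb+v\rFa}\geq\mod{\rFb}\geq\si$, hence (shifted by any admissible $\eta_F$) stays in $\Es{\si}$ where the seminorms apply. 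This $H$-dependent shift is the key device your argument is missing.

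A secondary weakness is your treatment of \rmii\ for the $q=0$ term. Decay in $\HFa$ cannot be bought by ``taking $s'$ large'': the seminorms $\Ss{s}{J}$ control decay in $\la$, whereas integrability in the unbounded $\HFa$-directions must come from smoothness of the integrand in $\laFa$. The paper integrates by parts $\ell+1$ times in the $\rFa$-direction (this is precisely where the hypothesis $J\geq\ell+1$ enters), gaining the factor $\mod{\HFa}^{-(\ell+1)}$, and then applies part \rmi\ (already established, on the lower-rank space) to the resulting function $m_{\ell+1}(\cdot;\HFa)$; your ``Fourier--Plancherel plus Schwartz decay in $\laFa$'' step does not by itself produce an $\lu{}$ bound in $\HFa$. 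Relatedly, in \rmi\ a seminorm bound of the form $C(1+\mod{\HFa})^{d_s}\Ss{s'}{J}(m)$ with $d_s>0$ would already destroy the uniform $\ly{}$ estimate; the inner multiplier must be controlled uniformly in $\HFa$, as in the paper.
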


\begin{proof}
Suppose that $m$ is in $\Sspace{\Esi,J}$, and 
denote by $k$ its inverse spherical Fourier transform
$$
k(\exp H)=
\int_{\fra^*} \vp_{\la}(\exp H)\,m(\la) \planl
\quant H\in\fra.
$$
It is straightforward to check that this integral is
absolutely convergent. 

First suppose that $\ell=1$. 
Then $\Strip_2$ is the interval $\{H\in \OV{\fra^+}: 0\leq \al(H)
\leq 2\}$, where $\al$ denotes the unique simple positive root.
In particular, $\Strip_2$ is a bounded subset of $\OV{\fra^+}$, and the function
$H\mapsto \e^{2\rho(H)}$ is bounded on $\Strip_2$.
Furthermore, $\si = \mod{\rho}$, so that $\bE_{\si} = \{0\}$.
Now, (\ref{f: estimatec}) and the fact that $\norm{\vp_{\la}}{\infty} = 1$ 
for any $\la$ in $\fra^*$ imply~that
$$
\begin{aligned}
\mod{\e^{2\rho(H)}k(\exp H)}
& \leq C\, \int_{\fra^*} \mod{m(\la)}
        \, \bigl(1+\mod{\la}\bigr)^{n-\ell} \wrt\la\\
& =    C\,  \Ss{n-\ell}{0}(m)
    \quant H \in \Strip_2,
\end{aligned}
$$  
where $C$ does not depend on $m$ in $\Sspace{\Esi,J}$.
Therefore, by (\ref{f: stima Cosh}),
$$
\norm{\cM_{2\rho}k}{\ly{\Strip_2}}
\leq C\,\Ss{n-\ell}{0}(m), 
$$
whence 
$$
\norm{\cM_{2\rho}k}{\lu{\Strip_2}}
\leq C\,\Ss{n-\ell}{0}(m), 
$$
because $\Strip_2$ has finite measure. 
This proves both \rmi\ and \rmii\ in the case where $\ell = 1$.
 
Now suppose that $\ell \geq 2$, and that $m$ is in $\Sspace{\bE_{\si},J}$.
We observe preliminarily that, arguing as we did above in the case
where $\ell =1$, we may show that
%
$$
\norm{\cM_{2\rho}k}{\ly{\Strip_2\cap \NBall{1}}}
\leq C\,\Ss{n-\ell}{0}(m). 
$$
Since $\Strip_2\cap \NBall{1}$ has finite measure,
\begin{equation} \label{f: near the origin}
\norm{\cM_{2\rho}k}{\lu{\Strip_2\cap \NBall{1}}}
\leq C\,\Ss{n-\ell}{0}(m).
\end{equation}

Thus, in the rest of the proof we may assume that $H\in \bs_2
\setminus \NBall{1}$.

A consequence of \cite[Lemma 2.1.7]{AJ}
is that $\Strip_2$ is covered by a finite number of 
regions $\pareteF{F}{\de_F}{\vep_F}$,
where $\emptyset \subset F\subseteq \Sppp$,
$\de_F$ and $\vep_F$ may be chosen so that 
$0<\de_F\leq \vep_F<\infty$, and $\de_F$ is as small as we~need.
We shall prove that $\cM_{2\rho}k$ is either bounded or
integrable in $\Strip_2$ 
by showing that $\cM_{2\rho}k$ is bounded or integrable 
respectively in 
$\pareteF{F}{\de_F}{\vep_F}$ for every nontrivial subset $F$ of $\Sppp$. 

Fix $F \subseteq \Sppp$, $\de_F$ and $\vep_F$ as above.
By using the Trombi--Varadarajan asymptotic expansion for the
spherical functions,
and the Weyl invariance of $m$, for each positive integer~$N$ we may write 
$$
k(\exp H)
= \sum_{q\in \QFa,\,\height{q}<N} h_q^F (H) + r_N^F  (H)
\quant H \in \pareteF{F}{\de_F}{\vep_F},
$$
where $h_q^F(H)$ is defined,
for every $H$ in $\pareteF{F}{\de_F}{\vep_F}$, by
\begin{equation}\label{f: hq}
h_q^F(H)
=  \mod{W_F \backslash W}\,
     \e^{-\rFa(H)}\,\int_{\fra^*}\mod{\bcFb(\la)}^{-2}\,
     \bigl[(\vbcFa)^{-1}m\bigr](\la)\,\phiFa{\la}{q}(\exp H) \wrt\la
\end{equation}
and $r_N^F$ is a remainder term. 
We extend $h_q^F$ and $r_N^F$ to $\Strip_2$ by setting them equal to $0$
outside~$\pareteF{F}{\de_F}{\vep_F}$.

First we prove \rmi. 
We argue by induction on the rank $\ell$ of the symmetric space. 
We have already proved \rmi\ in the case where $\ell=1$.
Suppose that \rmi\ holds for all symmetric spaces of the 
noncompact type and rank $\leq \ell-1$, 
and consider a symmetric space $X$ of the noncompact type and rank~$\ell$.  

Consider the remainder term $r_N^F$. 
By Theorem~\ref{t: asymptoticexpansion}~(iv) and (\ref{f: estimatesinconeF}) 
there exist positive constants $C$ and $d$ such that
\begin{equation}\label{f: pointwiserN}
\begin{aligned}
\bigmod{r_N^F(H)}
&  \leq C\, \e^{-\rho(H)-N\omFa(H)}\,\bigl( 1+\mod{H} \bigr)^{d} \,
    \int_{\fra^*}\mod{m(\la)}\,\bigl( 1+\mod{\la} \bigr)^{d}\wrt\la\\
&  \leq C\,  \e^{-2\rho(H)}\,\e^{\mod{\rho}\mod{H}-N\vep_F\mod{H}}\,
    \bigl( 1+\mod{H} \bigr)^{d}\, \Ss{d}{0}(m) 
    \quant H \in \pareteF{F}{\de_F}{\vep_F}.
\end{aligned}
\end{equation}
Choose $N > \mod{\rho}/\vep_F$.  Then 
\begin{equation} \label{f: norm rN}
\norm{\cM_{2\rho}r_N^F}{\ly{\Strip_2}}
\leq C\,\Ss{d}{0}(m).
\end{equation}

Next, suppose that $q$ is in $\QFa\setminus \{0\}$ with $\height{q}<N$. 
We may write the integral in (\ref{f: hq}) as an iterated integral,
where the outer integral is on $\astFb$ and the inner integral on $\astFa$.

For the rest of the proof for each $v\in(0,1)$ we shall write
$\rFa_v$ instead of $(1-v) \, \rFa$.

Since $m$ is holomorphic in $\TW$, $\phiFa{\la}{q}$ and 
$(\vbcFa)^{-1}$ are holomorphic 
in a neighborhood of $T_{(\astFa)^+}$, for each $v\in(0,1)$
we may move the contour of integration in the inner integral 
to the space $\astFa+i \rFa_v$, and obtain
$$
h_q^F (H) 
= \mod{W_F\backslash W} \,
    \e^{-\rFa(H)}\,\int_{\astFb} \bigmod{\bcFb(\laFb)}^{-2}\, m_q(\laFb) \wrt
    \laFb \quant H \in \pareteF{F}{\de_F}{\vep_F},
$$
where
$$
m_q(\laFb)=
\int_{\astFa} \bigl[ (\vbcFa)^{-1} \, m \bigr]\bigl( \laFb+\laFa+i \rFa_v 
\bigr)\, \phiFa{\la+i \rFa_v}{q}(\exp H)\wrt\laFa.
$$
Set $v=1/\rFa(H)$, and note that 
$\mod{\rho- \rFa_v}\geq\mod{\rFb}\geq \si$, so that 
$ \rFa_v$ is in~$\Es{\si}$. 
By the estimate (\ref{f: estimatecFbcFa}) on the Harish-Chandra function
$$
\mod{\bcFb(\laFb)}^{-2}
\leq C\, (1+\mod{\laFb})^{\sum_{\al \in \SppFb} d_{\al}} 
\leq C\, (1+\mod{\laFb+\laFa})^{\sum_{\al \in \Spp} d_{\al}} 
=    C\, (1+\mod{\la})^{n-\ell}.
$$
By Theorem \ref{t: asymptoticexpansion}~(iii), (\ref{f: estimatecFbcFainsieme}) 
and (\ref{f: estimatesinconeF}) we have that
for all $H$ in $\pareteF{F}{\de_F}{\vep_F}$
\begin{equation}
\begin{aligned} \label{f: pointwise hq}
\bigmod{h_q^F(H)}
&   \leq C\,\e^{-\rFa(H) + \vep_F\mod{\HFb}- \rFa_v(H)-\rFb(H)-q(H)}
     \int_{\fra^*} \bigmod{m\bigl(\la+i \rFa_v)}\,\wrt\mu^{n-\ell+d}(\la) \\
&   \leq C\, \e^{-2\rho(H)+(\vep_F+\mod{\rFb})\mod{\HFb}
     -\vep_F\mod{q}\mod{H}}\,\Ss{n-\ell+d}{0}(m)\\ 
&  \leq C\, \e^{-2\rho(H)+(\vep_F+\mod{\rFb})\gamma\de_F\mod{H}
     -\vep_F\height{q}\mod{H}/2}\,\e^{-\vep_F\height{q}/2} \, 
     \Ss{n-\ell+d}{0}(m).
\end{aligned}
\end{equation}
Thus, if $\de_F \leq \gamma^{-1}\,(\vep_F+\mod{\rFb})^{-1}\,\vep_F/2$, then
$$
\norm{\cM_{2\rho}h_q^F}{\ly{\Strip_2}}
\leq C\, \e^{-\vep_F\height{q}/2} \, \Ss{n-\ell+d}{0}(m). 
$$
By summing over all $q$ in $\QFa$ such that $0<\mod{q}<N$, we see that
\begin{equation}\label{f: hq1}
\Bignorm{\cM_{2\rho} \Bigl(\sum_{ q\in\QFa,\,0<\height{q}< N} h_q^F
\Bigr)}{\ly{\Strip_2}}
\leq C\, \Ss{n-\ell+d}{0}(m).
\end{equation}

Finally, we consider $h_0^F$.  
By arguing much as above, we move the contour 
of integration to the space $\fra^*+i \rFa_v$ with $v=1/\rFa(H)$. 
Then
(\ref{f: hq}) and the formula
for $\vp_{\la,0}^F$ given in Theorem \ref{t: asymptoticexpansion}~\rmi\ 
imply that for all $H$ in $\pareteF{F}{\de_F}{\vep_F}$ 
\begin{equation} \label{f: espr h0F}
h_0^F(H)
=  \mod{W_F\backslash W}
    \e^{(v-2)\rFa(H)}\,\int_{\aFb^*}
\vp_{\laFb}(\exp \HFb) \, m_0(\laFb; H^F) \,\mod{\bcFb(\laFb)}^{-2}\wrt\laFb
\end{equation}
where 
\begin{equation} \label{f: def m0}
m_0(\laFb; H^F)
= \int_{\astFa} \bigl[(\vbcFa)^{-1} m\bigr](\laFb+\laFa+i \rFa_v)\,
\e^{i\laFa(H^F)} \wrt\laFa.
\end{equation}
Define $\si_F$  by
$$
\si_F
= \min\{\mod{\rho_{F'}}: \emptyset\subset F'\subseteq F \}.
$$
Clearly $\si_F\geq \si$.  Denote by $\EsFb{\si_F}$ the
$W_F$ invariant subset of $\bW_F$ defined by
$$
\EsFb{\si_F}
= \{\eta_F\in \bW_F: \,  \mod{\eta_F- w\cdot\rFb} \geq \si_F \, \, \, \, 
\hbox{for all $w\in W_F$} \}.
$$
Observe that if $\eta_F$ is in $\EsFb{\si_F}$, 
then $\eta=\eta_F+ \rFa_v$ is in $\Es{\si}$.  Indeed,
\begin{equation} \label{f: verifica}
\bigmod{\eta_F +  \rFa_v - \rho}
=     \bigmod{\eta_F - \rFb  -v \, \rFa}
\geq  \bigmod{\eta_F - \rFb}
\geq \si_F \geq \si.
\end{equation}
Now we prove that $m_0(\cdot; H^F)$ is in
$\Sspace{\EsFb{\si_F},0}$, uniformly with respect to $H^F$.   
Indeed, for any $r$ in $[0,\infty)$
$$
S_{\EsFb{\si_F},0}^{r}(m_0(\cdot;H^F))
= \sup_{\eta_F \in \EsFb{\si_F}} \int_{(\fra^*)_F}
      \mod{m_0(\laFb+i\eta_F;H^F)} \Upsilon^r(\laFb) \wrt \la_F.
$$
By (\ref{f: estimatecFbcFa})
$$
\mod{m_0(\laFb+i\eta_F;H^F)}
\leq C\, \int_{\astFa}  \mod{m(\laFb+\laFa+i\eta_F+i \rFa_v)} \, 
\Upsilon^{n-\ell} (\laFb+\laFa) \wrt\laFa.
$$
Hence, by Tonelli's Theorem and the fact that
$
\Upsilon^r(\laFb) \, \Upsilon^{n-\ell} (\laFb+\laFa) 
\leq \Upsilon^{n-\ell+r} (\laFb+\laFa)
$
$$
\begin{aligned}
S_{\EsFb{\si_F},0}^{r}(m_0(\cdot;H^F))
& \leq \sup_{\eta_F \in \EsFb{\si_F}} \int_{\fra^*}
      \mod{m(\la +i\eta_F+i \rFa_v)} \wrt \mu^{n-\ell+r}(\la) \\
& \leq C \, S_{\Esi,0}^{n-\ell+r}(m),
\end{aligned}
$$
where $C$ is independent of $H^F$.

Note that the restriction of $\vp_{\laFb}$ to $\exp(\fra_F)$ 
may be interpreted as the restriction to $\exp(\fra_F)$ of 
an elementary spherical function
on an appropriate symmetric space of the noncompact type and rank $\mod{F}$. 
By Lemma~\ref{l: insiemi}~\rmii\ if $H$ is in $\pareteF{F}{\de_F}{\vep_F}$,
then $H_F$ is in $(\Strip_F)_2$.  
By induction, there exists $s$ in $[0,\infty)$ such that
$$
\sup_{H_F \in (\Strip_F)_2} \Bigmod{\e^{2\rho_F(H_F)} \int_{\fra^*_F} 
\vp_{\laFb}(\exp \HFb) \, m_0(\laFb; H^F) \,\mod{\bcFb(\laFb)}^{-2}\wrt\laFb}
\leq C \, S_{\EsFb{\si_F},0}^{s}\bigl(m_0 (\cdot;H^F)\bigr).
$$
Hence
\begin{equation} \label{f: h0} 
\begin{aligned}
\bigmod{h_0^F(H)}
& \leq C\,\e^{-2\rFa(H)}\,\e^{-2\rFb(H)}\,
     S_{\EsFb{\si_F},0}^{s}\bigl(m_0 (\cdot;H^F)\bigr) \\
& \leq C\,\e^{-2\rho(H)}\, S_{\Esi,0}^{n-\ell+s}(m)
   \quant H \in \pareteF{F}{\de_F}{\vep_F}.
\end{aligned}
\end{equation}
Fr{}om (\ref{f: norm rN}), (\ref{f: hq1}) and (\ref{f: h0}) we deduce that 
$$
\bignorm{\cM_{2\rho} \,\mathcal H^{-1}m }{\ly{\Strip_2}}
\leq C\,\Ss{s'}{0}(m)
\quant m\in \Sspace{\Esi,0},
$$
where $s'=\max\{n-\ell+d,n-\ell+s   \}$, and \rmi\ is proved. 

Now we prove \rmii.   
Suppose that $m$ is in $\Sspace{\Esi,J}$ with $J \geq \ell+1$.
By arguing as in the proof of \rmi, we may write
$$
k(\exp H)
= \sum_{q\in \QFa,\,\height{q}<N} h_q^F (H) + r_N^F  (H)
\quant H \in \Strip_2.
$$
Observe that if $N>\mod{\rho}/\vep_F$, 
fr{}om the pointwise estimate (\ref{f: pointwiserN}) we deduce
that
\begin{equation} \label{f: rNL1}
\begin{aligned}
\int_{\pareteF{F}{\de_F}{\vep_F}}\bigmod{r^F_N(H)}\, \e^{2\rho(H)} \wrt H
&\leq   C\,\Ss{d}{0}(m)  \, \int_{\fra^+} 
      \e^{\mod{\rho}\mod{H}-N\vep_F\mod{H}}\, \bigl( 1+\mod{H} \bigr)^{d}\wrt H\\
&\leq C\,\Ss{d}{0}(m).
\end{aligned}
\end{equation}
Similarly, if $\de_F<\ga^{-1}\,(\vep_F+\mod{\rFb})^{-1}\,\vep_F/2$,
then the pointwise estimate (\ref{f: pointwise hq}) implies that
$$
\begin{aligned}
\int_{\pareteF{F}{\de_F}{\vep_F}}\bigmod{h_q^F(H)} \, \e^{2\rho(H)} \wrt H 
&  \leq C\, \e^{-\vep_F\height{q}/2} \, \Ss{n-\ell+d}{0}(m)  
    \int_{\fra^+} \e^{(\vep_F+\mod{\rFb})\ga\de_F\mod{H}}\,
     \e^{-\vep_F\height{q}\mod{H}/2}\, \wrt H \\
&  \leq C\, \e^{-\vep_F\height{q}/2} \, \Ss{n-\ell+d}{0}(m). 
\end{aligned}
$$
By summing over all $q$ in $\QFa$ such that $0<\mod{q}<N$, we see that
\begin{equation}\label{f: hq1L1}
\int_{\pareteF{F}{\de_F}{\vep_F}}
\Bigmod{ \sum_{ q\in\QFa,\,0<\height{q}< N}  h_q^F(H) }\, \e^{2\rho(H)} \wrt H 
\leq C\, \Ss{n-\ell+d}{0}(m).
\end{equation}
It remains to estimate $\int_{\Strip_2} \mod{h_0^F(H)} \, \e^{2\rho(H)} \wrt H$.
By arguing as in the proof of \rmi, we may write   
$$
h_0^F(H)
=  \mod{W_F\backslash W}
   \, \e^{(v-2)\rFa(H)}\,\int_{\aFb^*}
\vp_{\laFb}(\exp \HFb) \, m_0(\laFb; H^F) \,\mod{\bcFb(\laFb)}^{-2}\wrt\laFb,
$$
where $m_0$ is defined in (\ref{f: def m0}).
By integrating by parts $\ell+1$ times with respect to the variable 
$\laFa$ in the integral in (\ref{f: def m0}), we see that
$$
m_0(\laFb; H)
=  \smallfrac{1}{\bigl[i\, B(H_{\rho^F},\HFa)\bigr]^{\ell+1}}\,
\, m_{\ell+1}(\laFb; H^F),
$$
where
$$
m_{\ell+1}(\laFb; H^F)
=  \int_{\astFa} \partial^{\ell+1}_{\rFa}
      \bigl[(\vbcFa)^{-1} m\bigr](\laFb+\laFa+i \rFa_v)\,
      \e^{i\laFa(H^F)}   \wrt\laFa.
$$
We claim that $m_{\ell+1}(\cdot; H^F)$ is in
$\Sspace{\EsFb{\si_F},0}$, uniformly with respect to $H^F$.   

Indeed, by Leibniz's r\^ule $m_{\ell+1}$ 
may be written as a linear combination of terms of the form
$$
\int_{\astFa} \bigl[\partial^{\ell+1-j}_{\rFa} \bigl((\vbcFa)^{-1}\bigr) 
\, \bigl(\partial^{j}_{\rFa}  m\bigr)\bigr](\laFb+\laFa+i \rFa_v)\,
      \e^{i\laFa(H)}   \wrt\laFa.
$$
where $0\leq j \leq \ell+1$.
Therefore (\ref{f: estimatecFbcFa}) implies that 
for any $\eta_F$ in $\EsFb{\si_F}$
$$
\mod{m_{\ell+1}(\laFb+i\eta_F;H^F)}
\leq C\, \sum_{j=0}^{\ell+1}
\int_{\astFa}  \mod{\partial^{j}_{\rFa}m(\laFb+\laFa+i\eta_F+i \rFa_v)} \, 
\Upsilon^{n-\ell} (\laFb+\laFa) \wrt\laFa.
$$
Hence, for any $r$ in $[0,\infty)$
$$
\begin{aligned}
S_{\EsFb{\si_F},0}^{r}\bigl(m_{\ell+1}(\cdot;H^F)\bigr)
& = \sup_{\eta_F \in \EsFb{\si_F}} \int_{(\fra^*)_F}
      \mod{m_{\ell+1}(\laFb+i\eta_F;H^F)} \Upsilon^r(\laFb) \wrt \la_F \\
& \leq C\, \sum_{j=0}^{\ell+1} \sup_{\eta_F \in \EsFb{\si_F}} \int_{\fra^*}
      \mod{\partial^{j}_{\rFa} 
      m(\la +i\eta_F+i \rFa_v)} \wrt \mu^{n-\ell+r}(\la) \\
& \leq C \, S_{\Esi,\ell+1}^{n-\ell+r}(m),
\end{aligned}
$$
thereby proving the claim.  In the last inequality we have
used the fact proved above (see (\ref{f: verifica}))
that if $\eta_F$ is in $\EsFb{\si_F}$, 
then $\eta_F+ \rFa_v$ is in $\bE_\si$. 

By \rmi\ there exists $s$ in $[0,\infty)$ such that
for all $H_F$ in $(\Strip_F)_2$
$$
 \Bigmod{\e^{2\rho_F(H_F)} \int_{\fra^*_F} 
\vp_{\laFb}(\exp \HFb) \, m_{\ell+1}(\laFb; H^F) 
\,\mod{\bcFb(\laFb)}^{-2}\wrt\laFb}
\leq C \, S_{\EsFb{\si_F},0}^{s}\bigl(m_{\ell+1} (\cdot;H^F)\bigr).
$$
Hence
$$
\begin{aligned}
\bigmod{h_0^F(H)}
& \leq C\, \smallfrac{\e^{-2\rFa(H)-2\rFb(H)}}{\mod{H^F}^{\ell+1}}\,
     S_{\EsFb{\si_F},0}^{s}\bigl(m_{\ell+1} (\cdot;H^F)\bigr) \\
& \leq C\,\smallfrac{\e^{-2\rho(H)}}{\mod{H^F}^{\ell+1}}
     \, S_{\Esi,\ell+1}^{n-\ell+s}(m).
\end{aligned}
$$
Observe that, by (\ref{f: estimatesinconeF}), 
$$
\mod{H}^2 
= \mod{H_F}^2+\mod{H^F}^2 
\leq \ga^2 \, \de_F^2 \, \mod{H}^2 + \mod{H^F}^2
\quant H \in\pareteF{F}{\de_F}{\vep_F}\cap \NBall{1}^c.
$$
Hence, if $\de_F < 1/\ga$, then
$$
\mod{H^F}^2 \geq (1-\ga^2\, \de_F^2) \, \mod{H}^2  
\quant H \in \pareteF{F}{\de_F}{\vep_F}\cap \NBall{1}^c.
$$
Therefore
$$
\begin{aligned}
\int_{\pareteF{F}{\de_F}{\vep_F}\cap \NBall{1}^c}
\bigmod{ h_0^F(H) }\, \e^{2\rho(H)} \wrt H 
&  \leq C \, \Ss{n-\ell+s}{\ell+1}(m)\, 
    \int_{\pareteF{F}{\de_F}{\vep_F}\cap \NBall{1}^c}
    \bigmod{H}^{-(\ell+1)} \wrt H \\
&  \leq C\, \Ss{n-\ell+s}{\ell+1}(m) 
    \quant m\in \Sspace{\Esi,J}.
\end{aligned}
$$
This, (\ref{f: rNL1}), (\ref{f: hq1L1}) and (\ref{f: near the origin})
imply that 
$$
\bignorm{\cM_{2\rho} \,\mathcal H^{-1}m }{\lu{\Strip_2}}
\leq C\,\Ss{s'}{\ell+1}(m)
\quant m\in \Sspace{\Esi,J},
$$
where $s'=\max\{n-\ell+d,n-\ell+s   \}$. 

This concludes the proof of \rmii\ and of the lemma. 
\end{proof}

\section{Proof of the main result} \label{s: Proof}

In the proof of Theorem~\ref{t: main result}
we use Harish-Chandra's expansion of
spherical functions away from the walls of the Weyl chamber.
Denote by $\La$ the positive lattice generated
by the simple roots in $\Sigma^+$.
For all $H$ in $\fra^+$ and $\la$ in $\fra^*$
\begin{equation} \label{f: HC expansion}
\mod{{\bf{c}}(\la)}^{-2}\,\vp_{\la}(\exp H)
= \e^{-\rho(H)}\,\sum_{q\in \La} \e^{-q(H)} \,
\sum_{w\in W} {\bf{c}}(-w\cdot\la)^{-1} \,
\Ga_q(w\cdot \la)\,\e^{i(w\cdot \la)(H)}.
\end{equation}
The coefficient $\Ga_0$ is equal to $1$; the other coefficients $\Ga_q$ are
rational functions, holomorphic in
$\TWpiut$ for some $t$ in $\BR^-$
(see (\ref{f: ngbhd of E+}) for the definition of $\TWpiut$).
Moreover, there exists a constant $d$, and,
for each positive integer $N$, another constant $C$ such that
\begin{equation} \label{f: estimates for Gaq}
\mod{D^I \Ga_q(\zeta)}
\leq C\, (1+\mod{q})^d
\quant \zeta \in \TWpiut
\quant I: \mod{I} \leq N.
\end{equation}
Note that the estimate for the derivatives
is a consequence of Gangolli's estimate for~$\Ga_q$ \cite{Ga}
and Cauchy's integral formula.  The Harish-Chandra 
expansion is pointwise convergent in $\fra^+$ and uniformly
convergent in $\fra^+ \setminus \Strip_c$ for every $c>0$.

\begin{remark} \label{rem: c inversa}
Suppose that $L$ is a positive integer.
There exists a constant $C$ such that 
$$
\bignorm{(\hbox{\vbc})^{-1}\, \Ga_q\, m\, \bigl[1-(1-\e^{-Q})^L\bigr] 
\, \e^{Q/2}}{H'(\TWpiut;J,\kappa)}
\leq C \, (1+\mod{q})^d \, \norm{m}{H'(\TW;J,\kappa)}
$$
for all $m$ in $H'(\TW;J,\kappa)$ and for all $q$ in $\La$.
Similarly, 
there exists a constant $C$ such that 
$$
\bignorm{(\hbox{\vbc})^{-1}\, \Ga_q\, (M\circ Q)\, 
\bigl[1-(1-\e^{-Q})^L\bigr] 
\, \e^{Q/2}}{H(\TBpiut;J,\kappa)}
\leq C \, (1+\mod{q})^d \, \norm{M}{\fH(\bP;J,\kappa)}
$$
for all $M$ in $\fH(\bP;J,\kappa)$ and for all $q$ in $\La$.

To prove the first estimate we 
compute derivatives of order at most $J$ 
of $(\hbox{\vbc})^{-1}\, \Ga_q\, m_{B}\, \bigl[1-(1-\e^{-Q})^L\bigr] 
\, \e^{Q/2}$ by using Leibnitz's r\^ule.
To estimate each of the summands,
we use (\ref{f: estimates for Gaq}),
and the fact that for some $t$ in $\BR^-$
the function $(\hbox{\vbc})^{-1}$ is holomorphic in
$\TWpiut$, and both
$(\hbox{\vbc})^{-1}$ and its derivatives
grow at most polynomially at infinity in $\TWpiut$ (see 
(\ref{f: derivata c check})).

The proof of the second estimate is similar and is omitted.
\end{remark}

\begin{remark} \label{rem: integrale ion}
Observe that if $\kappa<1$, then for every $c$ in $\BR^+$
$$
\int_{\fra^+\setminus \Strip_c} \frac{\e^{-\om(H)}}{\bigl[1
+\cN(H)\bigr]^{\ell+1-2\kappa}} \wrt H
< \infty
\qquad\hbox{and} 
\qquad
\int_{\fra^+\setminus \Strip_c} \frac{\e^{\rho(H)-\mod{\rho}\, \mod{H}
-\om(H)}}{\bigl[1 +\rho(H)\bigr]^{(\ell-1)/2}} \wrt H
< \infty.
$$
We prove that the first integral above is convergent.  
The proof that the second is convergent is easier, and is omitted.

Observe that there exists $\vep$ in $\BR^+$
such that $\om(H) \geq \vep \, \mod{H}$ for all $H$ in
$\Cone{c_0} \setminus \Strip_c$. Therefore
$$
\int_{\Cone{c_0}\setminus \Strip_c} \frac{\e^{-\om(H)}}{\bigl[1
+\cN(H)\bigr]^{\ell+1-2\kappa}} \wrt H
 \leq \int_{\Cone{c_0}\setminus \Strip_c} \e^{-\vep \, \mod{H}} \wrt H 
 < \infty.
$$
Moreover, there exists a constant $C$ such that 
$
\cN(H) \geq C\, \mod{H'} \geq C\, \rho(H)
$
for every $H$ in $\fra^+\setminus (\Strip_c\cup \Cone{c_0})$.  Hence
$$
\begin{aligned}
\int_{\fra^+\setminus (\Strip_c\cup \Cone{c_0})} \frac{\e^{-\om(H)}}{\bigl[1
+\cN(H)\bigr]^{\ell+1-2\kappa}} \wrt H
& \leq C \, \int_{\fra^+} \frac{\e^{-\om(H)}}{\bigl[1
      +\rho(H)\bigr]^{\ell+1-2\kappa}} \wrt H,
\end{aligned}
$$
which is easily seen to be convergent \cite[Lemma 3.5]{I3}.
\end{remark}

Now we prove our main result, which we
restate for the reader's convenience.

\begin{theorem*}[\textbf{2.10}]
Denote by $J$ the integer $[\![n/2]\!]+1$.
The following hold:
\begin{enumerate}
\item[\itemno1]
if $\kappa$ is in $[0,1)$, then there exists a constant $C$ such that
for all $B$ in $\GOp{2}$ for which $m_B$ is in $H'(\TW;J,\kappa)$
$$
\opnorm{B}{1;1,\infty}
\leq C \, \norm{m_B}{H'(\TW;J,\kappa)};
$$
\item[\itemno2]
there exists a constant $C$ such that 
$$
\opnorm{M(\cL)}{1;1,\infty}
\leq C \, \norm{M}{\fH(\bP;J,1)}
\quant M \in \fH(\bP;J,1).
$$
\end{enumerate}
\end{theorem*}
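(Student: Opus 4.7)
The aim is to bound the convolution kernel $k_B = \cH^{-1}(m_B)$ pointwise by multiples of the model kernels $\tau_1^{\vep}$ and $\tau_2^{\vep}$ introduced in Section~\ref{s: The higher rank case I}, and then to invoke Proposition~\ref{p: debole per tau} to read off the weak type $1$ estimate. Throughout, let $\Strip_c$ denote the strip near the walls defined in~(\ref{f: Strip}).

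\textbf{Step I (Gaussian regularisation and decomposition).} Fix a sufficiently large integer $L$ and write
\[
m_B = m_B\bigl[1-(1-\e^{-Q})^L\bigr] + m_B\,(1-\e^{-Q})^L .
\]
The first term is a linear combination of $m_B\,\e^{-jQ}$, $1\le j\le L$, each of which is rapidly decreasing in every real direction; the corresponding kernel is controlled directly by heat-kernel estimates. The second term vanishes to order $L$ at every Weyl translate of $i\rho$, which is exactly where $Q$ vanishes and where the singular behaviour of $m_B$ lives. Introducing the Gaussian factor $\e^{-Q/2}$ one then studies the auxiliary multiplier
\[
\wt m_B := m_B\,(1-\e^{-Q})^L\,\e^{Q/2},
\]
whose inverse \emph{Euclidean} Fourier transform is absolutely convergent. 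The kernel $k_B$ is then recovered by multiplying a Gaussian kernel and convolving with the heat-type pieces.

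\textbf{Step II (Estimate away from the walls).} On $\fra^+ \setminus \Strip_c$ insert the Harish-Chandra expansion~(\ref{f: HC expansion}) into the inverse transform. This rewrites $\e^{2\rho(H)} k_B(\exp H)$ as
\[
\sum_{q\in\La}\e^{-q(H)}\sum_{w\in W}\cH_{\mathrm{eucl}}^{-1}
     \bigl[(\hbox{\vbc})^{-1}\,\Ga_q\,\wt m_B\bigr](w\!\cdot\!H),
\]
plus heat-type remainders. By Remark~\ref{rem: c inversa}, each summand's symbol lies in $H'(\TWpiut;J,\kappa)$ with norm $\lesssim(1+|q|)^d\,\norm{m_B}{H'(\TW;J,\kappa)}$. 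Lemma~\ref{l: pointwiseestimates}(i) then yields the pointwise bound
\[
|k_B(\exp H)|\le C\,\norm{m_B}{H'(\TW;J,\kappa)}\,\e^{-2\rho(H)}\,\bigl(1+\cN(H)\bigr)^{-\ell-1+2\kappa}
\]
on $\fra^+\setminus\Strip_c$ (up to a logarithmic factor when $\kappa=0$), which is exactly a multiple of $\tau_2^{2-2\kappa}(\exp H)$. The sum over $q\in\La$ converges geometrically thanks to the $\e^{-q(H)}$ factor and the fact that $\om(H)\ge c$ off the strip, giving absolute summability uniformly in $H$.

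\textbf{Step III (Estimate near the walls).} On $\Strip_c$ the Harish-Chandra expansion is no longer convergent, and one has to show instead that $k_B\,\e^{2\rho}$ is integrable there. This is precisely the content of Lemma~\ref{l: inductivestep}(ii): once we check (using Remark~\ref{rem: c inversa} and the growth control (\ref{f: derivata c check}) on $\vbc^{-1}$) that $\wt m_B/\vbc$ belongs to the Fréchet space $\Sspace{\bE_\si, J}$ with the required seminorm bounds, the lemma provides
\[
\int_{\Strip_c}|k_B(\exp H)|\,\e^{2\rho(H)}\,\de(H)\wrt H
\le C\,\norm{m_B}{H'(\TW;J,\kappa)},
\]
so the contribution of the strip is controlled in $L^1$, hence trivially in weak $L^1$.

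\textbf{Step IV (Assembly and part (i)).} Combining Steps II and III, $k_B = k_B^{\mathrm{wall}} + k_B^{\mathrm{out}}$ with $k_B^{\mathrm{wall}}\in L^1(X)$ and $|k_B^{\mathrm{out}}|\le C\,\norm{m_B}{H'(\TW;J,\kappa)}\,\tau_2^{2-2\kappa}$. Since $\kappa\in[0,1)$ gives $\vep:=2-2\kappa>0$, Proposition~\ref{p: debole per tau}(ii) states that $T_2^{\vep}$ is of weak type $1$, and the weak type $1$ bound for $B$ with the required norm constant follows.

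\textbf{Step V (Part (ii): the endpoint $\kappa=1$).} Here $\tau_2^{0}$ is \emph{not} of weak type $1$, so Step IV fails. The extra input is the spectral form $m_B = M\!\circ\!Q$: in the second estimate of Lemma~\ref{l: pointwiseestimates}(ii) one integrates by parts only in the privileged direction $\rho/\mod{\rho}$ (this is where the rotational symmetry of $M\!\circ\!Q$ is used), obtaining instead
\[
|k_B(\exp H)|\le C\,\norm{M}{\fH(\bP;J,1)}\,\e^{-\mod{\rho}\mod{H}-\rho(H)}\,(1+\rho(H))^{(1-\ell)/2},
\]
which is precisely $\tau_1^{0}(\exp H)$. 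By Proposition~\ref{p: debole per tau}(i), $T_1^{0}$ \emph{is} of weak type $1$, and this delivers the endpoint estimate.

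\textbf{Main obstacle.} The technical heart of the argument is Step III: one must verify that, after all the contour manipulations, the function $(1-\e^{-Q})^L\,\e^{Q/2}\,\vbc^{-1}\,\Ga_q\,m_B$ truly lies in $\Sspace{\bE_\si,J}$ with seminorm linear in $\norm{m_B}{H'(\TW;J,\kappa)}$. The delicate point is that the singularities of $\vbc^{-1}$ and of $\Ga_q$ on $\TW$ must be absorbed by the $L$-fold vanishing of $(1-\e^{-Q})^L$ at the Weyl orbit of $i\rho$; this forces $L$ to be chosen sufficiently large (depending on $n$ and $\ell$). Once this is secured, the induction over the rank built into Lemma~\ref{l: inductivestep}, together with the weak type $1$ estimates of Proposition~\ref{p: debole per tau}, furnishes both assertions of the theorem.
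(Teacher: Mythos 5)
Your overall architecture (decomposition via powers of $1-\e^{-\cL}$, Harish-Chandra expansion together with Lemma~\ref{l: pointwiseestimates} away from the walls, Lemma~\ref{l: inductivestep} near the walls, Proposition~\ref{p: debole per tau} to conclude, and the radial estimate of Lemma~\ref{l: pointwiseestimates}~(ii) with $\tau_1^0$ at the endpoint $\kappa=1$) is exactly the paper's. However, Step~I contains a genuine error that propagates: you attach the two treatments to the wrong pieces of the decomposition. The factor $(1-\e^{-Q})^L$ kills the singularity at $i\rho$ but does not decay along $\fra^*$, so your auxiliary multiplier $\wt m_B=m_B\,(1-\e^{-Q})^L\,\e^{Q/2}$ grows like $\e^{\Re Q/2}$ at infinity; it is not in $H'(\TWpiut;J,\kappa)$ and its inverse Euclidean Fourier transform is not absolutely convergent, so Lemma~\ref{l: pointwiseestimates} and Remark~\ref{rem: c inversa} (which is stated for the complementary factor $1-(1-\e^{-Q})^L$) do not apply to it. Moreover the kernel of $B\,(1-\e^{-\cL})^L$ has a genuine Calder\'on--Zygmund singularity at the origin, which no pointwise domination by $\tau_1^\vep$ or $\tau_2^\vep$ (these only encode behaviour at infinity) can handle; in the paper this part is disposed of by checking that $m_B\,(1-\wt h_1)^L$ satisfies the pseudodifferential estimates (\ref{f: pseudodiff estimates}) — the $L$-fold vanishing of $(1-\wt h_1)^L$ at $i\rho$ compensating the unboundedness of $m_B$ there — and invoking Anker's multiplier theorem \cite{A2}. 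This step is missing from your proposal.

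Symmetrically, the piece $m_B\,[1-(1-\e^{-Q})^L]$, which you claim is ``controlled directly by heat-kernel estimates'', is precisely the piece requiring the hard work: the factor $1-(1-\e^{-Q})^L$ equals $1$ at $i\rho$, so this multiplier still carries the full singularity $\mod{Q}^{-\kappa}$, and for a general $m_B$ there is no subordination formula transferring heat-kernel bounds to it (the Introduction explicitly notes that Anker's heat-kernel strategy is unlikely to extend beyond $\cL^{-\al/2}$). If you exchange the roles of the two pieces, your Steps~II--V essentially become the paper's argument, modulo two smaller corrections: near the walls one verifies that $m_{B_2}=m_B[1-(1-\wt h_1)^L]$ itself lies in $\Sspace{\bE_\si,\ell+1}$ with seminorms controlled by $\norm{m_B}{H'(\TW;J,\kappa)}$ (not ``$\wt m_B/\vbc$''), and in the endpoint case the integration by parts in Lemma~\ref{l: pointwiseestimates}~(ii) is performed in the direction of $H$ rather than the fixed direction $\rho/\mod{\rho}$ — it is the isotropy of the $H(\TB;J,1)$ bounds satisfied by $M\circ Q$ that permits this. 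As written, though, the locally singular part of $B$ is never shown to be of weak type $1$, and the kernel machinery is applied to a multiplier to which it does not apply.
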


\begin{proof}
First we prove \rmi.
Suppose that $L$ is a positive integer $> \kappa + J$. 
We denote by $B_1$ and~$B_2$ the operators defined by
$$
B_1 = B \, \bigl(1-\e^{-\cL} \bigr)^{L}
\qquad\hbox{and}\qquad
B_2 = B \, \bigl[1-\bigl(1-\e^{-\cL} \bigr)^{L}\bigr].
$$
Thus,
$
B= B_1 + B_2.
$
Denote by $h_1$ the heat kernel at time $1$ (see (\ref{f: heat kernel})). 
The spherical multipliers associated to $B_1$ and $B_2$ are the functions
$m_{B_1}$ and $m_{B_2}$ on $\TW$ defined by
$$
m_{B_1} = m_B \, \bigl(1-\wt h_1\bigr)^{L}
\qquad\hbox{and}\qquad
m_{B_2} = m_B \, \bigl[1-\bigl(1-\wt h_1\bigr)^{L}\bigr].
$$
Denote by $\psi$ a smooth $K$--bi-invariant
function such that $\psi(\exp H) = 0$ for $H$ in $\Strip_1\cap \NBall{2}^c$,
and $\psi(\exp H) = 1$ for $H$ in $\Strip_2^c \cup \NBall{1}$.
We decompose~$\kBdue$ as follows
$$
\kBdue
= (1-\psi)\,\kBdue + \psi \, \kBdue.
$$

\emph{Step I: $B_1$ is of weak type $1$}.
Since ${L}>\kappa + J$,
the function $m_{B_1}$ and its derivatives
up to the order $J$ are bounded on $\TW$.  This is due to the fact
that $(1-\wt h_1)^{L}$ vanishes at the point $i\rho$,
together with all its derivatives up to the order $L-1$,
and this compensates for the fact that $m_{B_1}$ and
its derivatives may be unbounded near $i\rho$.
A straightforward computation shows that $m_{B_1}$ satisfies
the hypotheses of 
\cite[Corollary 17]{A2}. Therefore~$B_1$ is of weak type~$1$,
and $\opnorm{B_1}{1;1,\infty} \leq C \, \norm{m}{H'(\TW;J,\kappa)}$.

\emph{Step II: estimates away from the walls}.
We claim that the function $\psi\, \kBdue$
may be written as the sum of two
$K$--bi-invariant functions $\kBdueO$ and $\kBdueI$,
where $\kBdueI$ is in $\lu{\KGK}$ and $\kBdueO$
satisfies the following estimates in Cartan co-ordinates:
there exists a constant $C$ such that for all
$H$ in $\fra^+ \setminus \Strip_1$
\begin{equation} \label{f: kB2 Cartan i}
\bigmod{\kBdueO (\exp H)}
\leq 
\begin{cases}
C\, \norm{m_B}{H'(\TW;J,\kappa)}\,
\e^{-2\rho(H)}\, \log\bigl(2+ \cN(H)\bigr) \,\bigl[1+ \cN(H)\bigr]^{-\ell-1}
& \hbox{if $\kappa =0$} \\
C\, \norm{m_B}{H'(\TW;J,\kappa)}\,
\e^{-2\rho(H)} \, \bigl[1+ \cN(H)\bigr]^{2\kappa-\ell-1}
& \hbox{if $0< \kappa \leq 1$} \\
\end{cases}
\end{equation}
(see (\ref{f: anisotropic norm}) 
for the definition of $\cN$).

To prove this, 
we observe preliminarily
that if $H$ is in $\fra^+ \setminus \Strip_1$ and
$q = \sum_{\al \in \Sppp} n_{\al} \, \al$, then
\begin{equation} \label{f: serie HC 0}
q(H)
= \sum_{\al \in \Sppp} n_{\al} \, \al(H)
  \geq \om(H) \sum_{\al \in \Sppp} n_{\al}
= \om(H) \mod{q}
\end{equation}
so that
\begin{equation} \label{f: serie HC 1}
\sum_{q\in \La\setminus \{0\}} \e^{-q(H)}\, (1+\mod{q})^d
\leq  \e^{-\om(H)} \sum_{q\in \La\setminus \{0\}} \e^{1-\mod{q}}\, (1+\mod{q})^d
\leq C\,\e^{-\om(H)}.  
\end{equation}
This, (\ref{f: estimates for Gaq}) and (\ref{f: derivata c check})
(with $I = 0$) imply that 
\begin{equation} \label{f: serie HC 2}
\begin{aligned}
\sum_{q\in \La\setminus \{0\}} \e^{-q(H)} 
\int_{\fra^*} \mod{m_{B_2}(\la) \,{\bf{c}}(-\la)^{-1}\,
\Ga_q( \la)\,\e^{i\la(H)}} \wrt\la
& \leq C \, \norm{m_B}{\ly{\fra^*}} 
    \, \int_{\fra^*} \e^{-Q(\la)/2} \wrt\la \\
& \leq C\, \norm{m_B}{H'(\TW;J, \kappa)}. 
\end{aligned}
\end{equation}

Now, we substitute Harish-Chandra expansion (\ref{f: HC expansion})
in the inversion formula 
$$
k_B (\exp H)
= c_G \,\int_{\fra^*} m_B (\la)\,\vp_{\la}(\exp H)\,\wrt\mu(\la)
\quant H \in\fra^+,
$$
use the fact that the integrand is Weyl invariant, and obtain
$$
\psi(H)\, \kBdue(\exp H)
= c_G\, \mod{W} \,\psi(H)\,\e^{-\rho(H)}\,\sum_{q\in \La}\e^{-q(H)}\, 
\int_{\fra^*} m_{B_2}(\la) \,{\bf{c}}(-\la)^{-1}\,
\Ga_q( \la)\,\e^{i\la(H)} \wrt\la,
$$
where $\mod{W}$ denotes the cardinality of the Weyl group,
and the term by term integration is justified by (\ref{f: serie HC 2}).
Write $\psi\, \kBdue = \kBdueO+\kBdueI$, where
$$
\begin{aligned}
\kBdueO(\exp H)
& = c_G\, \mod{W} \,\psi(H)\,\e^{-\rho(H)}\,
  \int_{\fra^*} m_{B_2}(\la) \,{\bf{c}}(-\la)^{-1}\, \e^{i\la(H)} \wrt\la \\
\kBdueI(\exp H)
& = c_G\, \mod{W} \,\psi(H)\,\e^{-\rho(H)}\,
\sum_{q\in \La\setminus\{0\}}\e^{-q(H)}\,
\int_{\fra^*} m_{B_2}(\la) \,{\bf{c}}(-\la)^{-1}\,
\Ga_q( \la)\,\e^{i\la(H)} \wrt\la.
\end{aligned}
$$

To prove estimate (\ref{f: kB2 Cartan i}) for $\kBdueO$
in the case where $0< \kappa \leq 1$, 
we apply Lemma~\ref{l: pointwiseestimates}~\rmi\
(with $(\hbox{\vbc})^{-1}\, m_{B_2}\, \e^{Q/2}$ in place of~$m$),
and then Remark~\ref{rem: c inversa} (with $q=0$), and obtain that
$$
\begin{aligned}
\bigmod{\kBdueO(\exp H)}
& \leq C \, \bignorm{(\hbox{\vbc})^{-1}\, m_{B_2}\, 
   \e^{Q/2}}{H'(\TWpiu;J,\kappa)}
   \, \frac{\e^{-2\rho(H)}}{\bigl[1+\cN(H)\bigr]^{\ell+1-2\kappa}} \\
& \leq C \, \norm{m_{B}}{H'(\TW;J,\kappa)}
   \, \frac{\e^{-2\rho(H)}}{\bigl[1+\cN(H)\bigr]^{\ell+1-2\kappa}} 
\quant H \in \fra^+,
\end{aligned}
$$
as required.  The required estimate for $\kappa = 0$ is proved similarly.

It remains to show that $\kBdueI$ is in $\lu{\KGK}$ for all $\kappa$
in $[0,1]$.  We give the details when $0 < \kappa \leq 1$.  Those in 
the case where $\kappa =0$ are similar, and are omitted.
We apply Lemma~\ref{l: pointwiseestimates}~\rmi\
(with the function 
$(\hbox{\vbc})^{-1}\,\Ga_q\, m_{B_2}\, \e^{Q/2}$ in place of $m$)
to each summand of the series that appears
in the definition of $\kBdueI$,
and obtain that 
$$
\begin{aligned}
\bigmod{\kBdueI(\exp H)}
& \leq C \, \frac{\e^{-2\rho(H)}}{\bigl[1+\cN(H)\bigr]^{\ell+1-2\kappa}}
       \sum_{q\in \La\setminus\{0\}} \e^{-q(H)}\,
       \bignorm{(\hbox{\vbc})^{-1}\,\Ga_q\,
       m_{B_2}\, \e^{Q/2}}{H'(\TWpiu,J,\kappa)} \\
& \leq C\, \norm{m_B}{H'(\TW;J,\kappa)}
    \, \frac{\e^{-2\rho(H)-\om(H)}}{\bigl[1+\cN(H)\bigr]^{\ell+1-2\kappa}}
\quant H \in \fra^+\setminus \Strip_1,
\end{aligned}
$$
where we have used Remark~\ref{rem: c inversa},
(\ref{f: serie HC 0}) and (\ref{f: serie HC 1}).
Therefore
$$
\begin{aligned}
\norm{\kBdueI}{\lu{G}}
& \leq C \, \norm{m_B}{H'(\TW;J,\kappa)} \,
    \int_{\fra^+\setminus \Strip_1} \frac{\e^{-\om(H)}}{\bigl[1
    +\cN(H)\bigr]^{\ell+1-2\kappa}} \wrt H \\
& \leq C \, \norm{m_B}{H'(\TW;J,\kappa)}.
\end{aligned}
$$
where we have used Remark~(\ref{rem: integrale ion}).
This concludes the proof of Step II.

\emph{Step III: estimates near the walls}.
We shall prove that the function $(1-\psi)\, \kBdue$ is integrable. 
By Lemma~\ref{l: inductivestep}~\rmii\
there exists an integer $s$ such that
$$
\begin{aligned}
\norm{(1-\psi)\, k_{B_2}}{\lu{X}}
& \leq C \, \norm{\cM_{2\rho}k_{B_2}}{\lu{\Strip_2}} \\
& \leq C \, \Ss{s}{\ell+1}(m_{B_2}).
\end{aligned}
$$
To conclude the proof of Step III it suffices
to show that there exists a constant $C$ 
such that 
\begin{equation} \label{f: conclusione I}
\Ss{s}{\ell+1}(m_{B_2})
\leq C\, \,\norm{m_B}{H'(\TW;J,\kappa)}.
\end{equation}
Indeed, by Leibniz's r\^ule there exists a constant $C$ such that 
for every multiindex $I$ with $\mod{I}\leq \ell+1$ 
and for every $\zeta$ in $\TWpiu$
$$
\mod{D^I m_{B_2}(\zeta)}
\leq C\,\, \norm{m_B}{H'(\TW;J,\kappa)}\, \e^{-\Re Q(\zeta)/2}\, 
\max\bigl[ \mod{Q(\zeta)}^{-|I|/2},
\mod{Q(\zeta)}^{-\kappa-|I|'/2} \bigr].
$$
Then for every $\eta$ in $\Es{\si}$ 
and for every multiindex $I$ with $\mod{I}\leq \ell+1$ 
$$
\begin{aligned}
& \int_{\fra^*}\,\mod{D^I m_{B_2}(\la+i\eta)}\wrt\mu^s(\la)   \\
& \leq C\,\, \norm{m_B}{H'(\TW;J,\kappa)}\,
    \Bigl[\int_{\fra^*\setminus \Ball{R}}
    (1+\mod{\la})^{s-\mod{I}/2} \,\e^{-\Re Q(\la+i\eta)/2} \wrt\la
    +\int_{\Ball{R}}\mod{Q(\la+i\eta)}^{-\kappa-\mod{I}'/2}\wrt\la\Bigr],
\end{aligned}
$$
where $R$ is large enough. 
Observe that the first integral on the right hand side is dominated by 
$C\, \int_{\fra^+} \e^{-\mod{\la}^2/3} \wrt \la$, where
$C$ is a constant depending on $s$, but not on $\eta$.
Furthermore, since $\mod{Q(\la + i\eta)}$ is continuous and
does not vanish 
when $\eta$ is in $\Es{\si}$ and $\la$ stays in a compact neighbourhood
of the origin,
we may conclude that it is bounded away from $0$.  
Thus, the second integral on the right hand
side in the formula above is finite, and (\ref{f: conclusione I}) 
is proved.

\emph{Step IV: conclusion}.
Recall that
$$
\kBdue = \kBdueO + \kBdueI + (1-\psi)\, \kBdue,
$$
and that $\kBdueI$ and $(1-\psi)\, \kBdue$ are in $\lu{\KGK}$.
Thus, the operators $f\mapsto f*\kBdueI$ and
$f\mapsto f*\bigl[(1-\psi)\, \kBdue\bigr]$ are bounded on $\lu{X}$, hence,
\emph{a fortiori}, of weak type $1$.
The estimates proved in Step~II imply that
the convolution operator $f\mapsto f*\kBdueO$ is
of weak type~$1$ by Proposition~\ref{p: debole per tau}.
Therefore $B_2$ is of weak type~$1$. Since $B_1$ is of weak type
$1$ (see Step~I), we may conclude that $B$ is of weak type~$1$,
as required to conclude the proof of \rmi.

The proof of \rmii\ is similar to the proof of \rmi.  We briefly
indicate the changes needed.  
We decompose $M(\cL)$ as the sum $M_1(\cL) + M_2(\cL)$, where 
$M_1$ and $M_2$ are the functions defined~by 
$$
M_1 (z) = M(z) \, \bigl(1-\e^{-z} \bigr)^{L}
\qquad\hbox{and}\qquad
M_2 (z) = M(z) \, \bigl[1-\bigl(1-\e^{-z} \bigr)^{L}\bigr].
$$
We denote by $m_{M_1(\cL)}$ and $m_{M_2(\cL)}$ the 
spherical multipliers associated to $M_1(\cL)$ 
and to $M_2(\cL)$ respectively. 
We write $k_{M_2(\cL)} = (1-\psi) \, k_{M_2(\cL)}+ \psi\,  k_{M_2(\cL)}$,
where $\psi$ is the defined at the beginning of
the proof of \rmi. 
By arguing as in Step~I above, we see that $M_1(\cL)$
is of weak type $1$ and that
$\opnorm{M_1(\cL)}{1;1,\infty} \leq C \, \norm{M}{\fH(\bP;J,1)}$.

We claim that the function $\psi\, \kMdue$
may be written as the sum of two
$K$--bi-invariant functions $\kMdueO$ and $\kMdueI$,
where $\kMdueI$ is in $\lu{\KGK}$ and $\kMdueO$
satisfies the following estimates in Cartan co-ordinates
\begin{equation} \label{f: kM2 Cartan ii}
\bigmod{\kMdueO (\exp H)}
\leq C\, \norm{M}{\fH(\bP;J,1)}
\, {\e^{-\rho(H)- \mod{\rho}\, \mod{H}}} \, {\bigl[1+ \rho(H)
\bigr]^{(1-\ell)/2}} \quant H \in \fra^+ \setminus \Strip_1.
\end{equation}
Indeed, since $M$ is in $\fH(\bP;J,1)$, $M\circ Q$ is in $H(\TB;J,1)$
by Proposition~\ref{p: McircQ}~\rmi.
Then we may apply Lemma~\ref{l: pointwiseestimates}~\rmii\
(with $(\hbox{\vbc})^{-1}\, (M_2\circ Q)\, \e^{Q/2}$ in place of $m$),
and obtain that
$$
\begin{aligned}
\bigmod{\kMdueO(\exp H)}
&  \leq C \, \bignorm{(\hbox{\vbc})^{-1}\, (M_2\circ Q)\,
      \e^{Q/2}}{H(\TB;J,1)} \, {\e^{-\rho(H) - \mod{\rho}\, 
      \mod{H}}} \, {\bigl[1+\rho(H)\bigr]^{(1-\ell)/2}} \\
&  \leq C \, \norm{M}{\fH(\bP;J,1)} {\e^{-\rho(H) - \mod{\rho}\, 
      \mod{H}}} \, {\bigl[1+\rho(H)\bigr]^{(\ell-1)/2}} 
\quant H \in \fra^+,
\end{aligned}
$$
thereby proving (\ref{f: kM2 Cartan ii}).  Notice that we have used
Remark~\ref{rem: c inversa} in the last inequality.

It remains to show that $\kMdueI$ is in $\lu{\KGK}$.  By arguing
as in Step~II above, we see that $\kMdueI$ satisfies 
the following estimate
$$
\bigmod{\kMdueI(\exp H)}
\leq C\, \norm{M}{\fH(\bP;J,1)}
    \, {\e^{-\rho(H)-\mod{\rho}\, \mod{H}-\om (H)}} \, {\bigl[1+\rho(H)
    \bigr]^{(1-\ell)/2}}.
$$
We now use
Remark~(\ref{rem: integrale ion}), and obtain that
$$
\begin{aligned}
\norm{\kMdueI}{\lu{G}}
& \leq C \, \norm{M}{\fH(\bP;J,1)} \,
    \int_{\fra^+\setminus \Strip_1} 
    \, \frac{\e^{\rho(H)-\mod{\rho}\, \mod{H}- \om(H)}}{\bigl[1+\rho(H)
    \bigr]^{(\ell-1)/2}} \wrt H \\
& \leq C \, \norm{M}{\fH(\bP;J,1)}.
\end{aligned}
$$

The proof that the function $(1-\psi)\, \kMdue$ is integrable
with $\norm{(1-\psi)\, \kMdue}{\lu{\KGK}}\leq C \, \norm{M}{\fH(\bP;J,1)}$,
is almost \emph{verbatim} the same as the proof of the 
corresponding statement in case \rmi\ (see Step~III),
and is omitted. 
The required conclusion follows as in Step~IV in case~\rmi.

The proof of \rmii, and of the theorem, is complete.
\end{proof}

\end{document}